\documentclass[10pt]{article}
\usepackage{graphicx}
\usepackage{subcaption}
\usepackage{amsmath}
\usepackage{amsthm}
\usepackage{amssymb}
\usepackage{amsfonts}
\usepackage{tikz}

\newtheorem{theorem}{Theorem}[section]

\newtheorem{lemma}[theorem]{Lemma}

\newtheorem{remark}{Remark}

\begin{document}
\title{Hyperpriors for Mat\'ern fields  with applications in Bayesian inversion}
\maketitle

\centerline{\scshape Lassi Roininen}
\medskip
{\footnotesize
 \centerline{Department of Mathematics, Imperial College London}
   \centerline{South Kensington Campus, London SW7 2AZ, United Kingdom}
} 

\medskip

\centerline{\scshape Mark Girolami}
\medskip
{\footnotesize
 \centerline{Department of Mathematics, Imperial College London, and,}
   \centerline{Alan Turing Institute, London, United Kingdom}
} 

\medskip


\medskip

\centerline{\scshape Sari Lasanen}
\medskip
{\footnotesize
 \centerline{ University of Oulu}
   \centerline{Oulu, Finland}
}

\medskip

\centerline{\scshape Markku Markkanen}
\medskip
{\footnotesize
 \centerline{ Eigenor Corporation}
   \centerline{Lompolontie 1, FI-99600 Sodankyl\"a, Finland}
}

\bigskip
%

\sloppy

\begin{abstract}
We introduce non-stationary Mat\'ern field priors with stochastic partial differential equations, and construct correlation  length-scaling  with hyperpriors.
We model both the hyperprior and the Mat\'ern prior as continuous-parameter random fields.
As hypermodels, we use Cauchy and Gaussian random fields, which we map suitably to a desired correlation length-scaling range. 
For computations, we discretise the models with finite difference methods.
We consider the convergence of the discretised prior and posterior to the discretisation limit.
We apply the developed methodology to certain interpolation and numerical differentiation problems, and show numerically that we can make Bayesian inversion which promotes competing constraints of smoothness and edge-preservation.
For computing the conditional mean estimator of the posterior distribution, we use a combination of Gibbs and Metropolis-within-Gibbs sampling algorithms.
\end{abstract}

\section{Introduction}

In many Bayesian statistical estimation algorithms, the objective is to explore the posterior distribution of  a continuous-parameter unknown $v(x)$, $x\in\mathbb{R}^d$, $d=1,2,\dots$  given a direct or indirect noisy realisation $y\in\mathbb{R}^M$ of the unknown  $v$ at fixed locations.
Bayesian estimation algorithms are  widely used for example  in spatial statistics, Machine Learning and Bayesian statistical inverse problems and specific  applications include e.g.\ remote sensing, medical imaging and ground prospecting \cite{Bardsley2013,Calvetti2007,Kaipio2005,Lindgren2011,Roininen2014,Stuart2010}.

A priori information is a key factor in any Bayesian statistical estimation algorithm, as it is used to stabilise the posterior distribution.
Gaussian processes and fields are common choices as priors, because of their analytical and computational properties, and because of their easy construction through the mean and covariance functions.

Gaussian priors are known to promote smooth estimates \cite{Kaipio2005}. However, the smoothness of the unknown is inherently problem-specific. For example in many atmospheric remote sensing algorithms,  the unknown is often assumed to be continuous \cite{Norberg2015}. 
On the other hand, in many subsurface imaging or in medical tomography applications, the unknown may have anisotropies, inhomogeneities, and even discontinuities \cite{Dunlop2016}.

A common method for  expanding  prior models outside the scope of Gaussian distributions is to apply hyperparametric models \cite{Calvetti2007b,Calvetti2008}.
In this paper, we study  hypermodels for inhomogeneous Mat\'ern fields, and
our specific objective is to demonstrate that the proposed priors are flexible enough to promote both smooth and edge-preserving estimates.

Mat\'ern fields, a class of Gaussian Markov random fields \cite{Lindgren2011,Roininen2014,Rue2005}, are often defined as stationary Gaussian random fields, i.e.\  with them we can model isotropic unknown. 
By a simple change of variables, these fields can model also anisotropic features.
Via modelling the Mat\'ern fields with stochastic partial differential equations and by locally defining the correlation parameters, we can even construct inhomogeneous random fields \cite{Lindgren2011,Roininen2013a}.
In addition, we can make them  computationally very efficient, as  the finite-dimensional approximation of the inverse covariance matrix, the precision matrix, is  sparse by construction.

Let us denote by $v^N$  the finite-dimensional approximation of the  continuous-parameter random field  $v$.
The solution of a Bayesian estimation problem is a so-called posterior distribution.
We give it as an unnormalised probability density
\begin{equation} \label{eqn:posterior}
	D\left(v^N|y\right) = \frac{D\left(v^N\right)D\left(y|v^N\right)}{D(y)} \propto D\left(v^N\right)D\left(y|v^N\right),
\end{equation}
where the likelihood density $D\left(y|v^N\right)$ is obtained e.g.\ through some physical observation system and the a priori density $D\left(v^N\right)$ reflects our information of the unknown object before any actual measurement is done. We take $v^N$ to be an approximation of a Mat\'ern field.
$D(y)$ is a normalisation constant, which we often can omit from the analysis.

We model the Mat\'ern field length-scaling  as a continuous-parameter random field $\ell(x)$, i.e.\ 
 we construct a prior for certain parameters of the  prior $D(v^N)$.  %
By denoting the discrete approximation of the continuous-parameter field $\ell $ by $\ell^N$, we may include the hyperprior into the posterior distribution \eqref{eqn:posterior}, and hence write  the posterior probability density as 
\begin{equation*}
D\left(v^N,\ell^N \vert y\right) \propto D\left(v^N,\ell^N\right)D\left(y\vert v^N\right) = D\left(\ell^N\right)D\left(v^N\vert \ell^N\right) D\left(y\vert v^N\right),
\end{equation*}
where $D\left(v^N,\ell^N\right)$ is the prior constructed from the hyperprior $D\left(\ell^N\right)$  and the prior itself is $D\left(v^N\vert \ell^N\right)$. 
The idea is similar to the length-scale modelling in \cite{Paciorek2003,Paciorek2006}, but  we will use using stochastic partial differential equations and  sparse matrices,  hence gaining  computational advantages.

To simplify the sampling of  $\ell$, we introduce an auxiliary random field $u$, and  apply a further parametrisation  $\ell=\ell(x;  u)$.
We choose to model $u$ as a continuous-parameter Cauchy or Gaussian random field, but it could be also something else, for example an $\alpha$-stable random field \cite{Markkanen2016,Sullivan2016}. 
In this paper, we will  put  an emphasis on the  discretisation of these hypermodels and the convergence of the discrete models to continuous models.
From a computational point of view, we need to discuss  Markov chain Monte Carlo (MCMC) methods, and in particular we will consider Gibbs  and Metropolis-within-Gibbs sampling.

Modelling non-stationary Gaussian random fields and using them in e.g.\ interpolation is not a new idea, see for example Fuglstad  et  al.\ 2015 \cite{Fuglstad2015} for a comprehensive reference list.
Other constructions of non-stationary Gaussian processes, given lately notable attention, include e.g.\ so-called deep learning algorithms, where an $n$-layer Gaussian process is formed in such a way that the deepest level is a stationary Gaussian process.
These methods produce interesting priors from an engineering perspective. These methods, however,  are typically lacking rigorous mathematical analysis  \cite{Paciorek2003,Paciorek2006}.
Also, many deep learning methods typically rely on full covariance matrices, hence computational burden might become a bottleneck, especially in high-dimensional problems.

We note that discontinuities are best  modelled with  specific non-Gaussian prior constructions. 
A common choice is a  total variation prior, which promotes edge-preserving estimates. 
However, total variation priors are well-known to behave as Gaussian smoothness priors when the discretisation is made denser and denser (Lassas and Siltanen 2004)  \cite{Lassas2004}.
Constructions of proposed non-Gaussian priors, which do not converge to Gaussian fields in the discretisation limit, include the Besov space priors (Lassas et al.\ 2009) \cite{Lassas2009}, which are constructed on a wavelet basis, hierarchical Mumford-Shah priors  (Helin and Lassas, 2011) \cite {helin}, and recently applied Cauchy priors (Markkanen et al. 2016) \cite{Markkanen2016}.
Construction of a Mat\'ern style random field with non-Gaussian noise has been studied by Bolin 2014 \cite{Bolin2014}.
These algorithms may work suitably well for example for edge-preserving Bayesian inversion, but they may lack the smoothness or limiting properties, which we aim to deploy in the proposed hypermodel construction.

The rest of this paper is organised as follows: 
In Section \ref{sec:LinearBayes}, we review the basics of linear Bayesian statistical estimation algorithms.
In Section \ref{sec:Priors}, we discuss continuous Mat\'ern fields and  construct certain hypermodels.
In Section \ref{sec:discretisation}, we consider discretisation of the hypermodels, and, convergence of the discretised models to the continuous models.
We consider rough hypermodels in Section \ref{section:roughhypermodels}, and discuss Gibbs and Metropolis-within-Gibbs algorithms in Section \ref{section:MCMC}.
In Section \ref{sec:numerics}, we show by a number of numerical examples how to use the constructed model in interpolation algorithms.

\section{Linear Bayesian estimation problems}
\label{sec:LinearBayes}

Let us consider a continuous-parameter linear statistical estimation problem 
\begin{equation}\label{eqn:continuousobservation}
y = \mathcal{A}v+e.
\end{equation}
We assume that we know exactly one realisation of $y$ and the linear mapping $\mathcal A$ from some function space (e.g.\ separable Banach space) to a finite-dimensional space $\mathbb{R}^M$.
We further assume that we know the statistical properties of the noise $e$.
From now on, we assume that $e$ is zero-mean Gaussian white noise statistically independent of $v$.
We emphasise that we do not know the realisation of the noise $e$.
Given these assumptions, our objective is to estimate the posterior distribution of $v$.

For computational Bayesian statistical estimation problems, we discretise Equation \eqref{eqn:continuousobservation}, and write it as a matrix equation
\begin{equation} \label{eqn:discreteobservation}
y = Av^N+e.
\end{equation}
Likelihood probability, a factor of the posterior distribution (see Equation \eqref{eqn:posterior}), can then be given as
\begin{equation*}
D(y\vert v^N) \propto \exp\left(-\frac{1}{2}(y-Av^N)^T\Sigma^{-1}(y-Av^N)\right) ,
\end{equation*}
where the Gaussian measurement noise $e\sim \mathcal{N}(0,\Sigma)$, and $\Sigma$ is noise covariance matrix.

We start the preparations for the numerical sampling of the posterior for the hyperprior.
First, we will consider the case when  the unknown $v^N$,  conditioned with the hyperparameter
$\ell^N$, has Gaussian distribution 
 $\mathcal{N}(0,C ) $.

We can write the prior as a normalised probability density
\begin{equation*}
D\left(v^N\vert \ell_N\right) = \frac{1}{\sqrt{(2\pi)^N\vert C \vert}} \exp\left(-\frac{1}{2}(v^N)^TC^{-1} v^N\right).
\end{equation*}
Here we have included the normalisation constant, as we aim to include the hyperparameters of $v^N$  into the  matrix $C$, i.e.\  $ |C|=|C(\ell^N)|  $ is not a constant, for the different values of the hyperparameter, unlike the normalisation constant $\vert \Sigma\vert$ in the likelihood density.
Our aim is to decompose the prior inverse covariance, i.e.\ precision matrix, as $C(\ell^N)^{-1}=(L(\ell^N))^T L(\ell^N)$.
This means that, similarly to Equation \eqref{eqn:continuousobservation}, we can present the prior also as an observation equation $L(\ell^N)v^N=-w^N$, where $w^N\sim \mathcal{N}(0,I)$.
Hence, we can form a stacked matrix equation
\begin{equation*}
\begin{pmatrix} A \\ L(\ell^N)  \end{pmatrix} v^N + \begin{pmatrix} e \\ w^N \end{pmatrix} = \begin{pmatrix} y \\ 0 \end{pmatrix}.
\end{equation*}
The benefit of this formulation is that we can easily make Gibbs sampling of the posterior distribution with this formulation, and 
%
hence  to compute the posterior mean
\begin{equation*}
v^N_{\mathrm{CM}} = \frac{\int v^ND(v^N |\ell^N ) D(\ell^N)D(y|v^N)dv^N d\ell^N }{\int D(v^N|\ell^N ) D(\ell^N)D(y|v^N)dv^N d\ell^N}.
\end{equation*}

   We can write similarly the estimate for the variable length-scale  $\ell$
\begin{equation*}
\begin{split}
\ell_{\mathrm{CM}} ^N &=   \frac{\int \ell^N D(\ell^N )D(v^N\vert\ell^N) D(y\vert v^N) d\ell^N  dv^N }{\int D(\ell ^N)D(v^N\vert \ell ^N ) D(y\vert v^N) d\ell^N dv^N }\\
&=  \frac{\int \ell^N(u)D(u)D(v^N\vert\ell^N ( u)) D(y\vert v^N) du dv^N }{\int D(\ell^N(u))D(v^N\vert \ell^N(u)) D(y\vert v^N) du dv^N }. 
\end{split}
\end{equation*}
For the estimation of $\ell^N$, we  use the so-called Metropolis-within-Gibbs algorithm \cite{Markkanen2016}, where we make Gibbs type sampling, but component-wise, we make Metropolis-Hastings algorithm.
Alternatives for Metropolis-within-Gibbs include e.g.\ pseudo-marginal approach to MCMC studied by Filippone and Girolami 2014 \cite{Filippone2014}. 
The estimation of $v^N$ can be used with standard Gibbs sampling techniques.

\section{Mat\'ern field priors and hyperpriors}
\label{sec:Priors}

Mat\'ern fields are often defined as stationary Gaussian random field with a covariance function 
\begin{equation} \label{eqn:matern}
\mathrm{Cov}(x,x') = \mathrm{Cov}(x-x') = \frac{2^{1-\nu}}{\Gamma(\nu)}\left(\frac{\vert x-x' \vert}{\ell}\right)^\nu K_\nu\left(\frac{\vert x-x' \vert}{\ell}\right), \quad x,x'\in \mathbb{R}^d,
\end{equation}
where $\nu>0$ is the smoothness parameter, and $K_\nu$ is modified Bessel function of the second kind or order $\nu$.
The parameter $\ell$ is called length-scaling. 
Correlation length, that is to say where correlation is 0.1, corresponds approximately $\delta = \ell \sqrt{8\nu}$. 
From now on, we suppose that the Mat\'ern fields have a  zero mean.

Let us  recall the stochastic partial differential equation for the Mat\'ern fields \cite{Lindgren2011,Roininen2014}.
The  Fourier transform of the covariance function in Equation \eqref{eqn:matern}, gives a power spectrum
\begin{equation*}
S(\xi) = \frac{2^d\pi^{d/2}\Gamma(\nu+d/2)}{\Gamma(\nu)\ell^{2\nu}}\left(\frac{1}{\ell^2}+\vert\xi\vert^2\right)^{-(\nu+d/2)}.
\end{equation*}
As first mentioned by Rozanov 1977 \cite{Rozanov1977}, only fields with spectral density given by the reciprocal of a polynomial have a Markov representation.
 For our applications, we fix $\nu= 2-d/2$.

If we let $w$ be white noise, and by using 'hat'-notation for a Fourier-transformed object, then we may define the basic 
Mat\'ern field  $v$ through  the equation $\widehat{v} = \sigma \sqrt{S(\xi)}\widehat{w}$ in the sense of distributions. 
By using inverse Fourier transforms, we may write a stochastic partial differential equation
\begin{equation*} 
\left(1-\ell^2 \Delta\right) v = \sigma\sqrt{\ell^d}w.
\end{equation*}
Here we have an elliptic operator equation.
We note that the constructed field  $v$ is isotropic, i.e.\ the field has constant correlation length-scaling $\ell$ to every coordinate direction.

We modify the isotropic formulation to be inhomogeneous by allowing a spatially variable  length-scaling field $\ell(x)$, for which we write a stochastic partial differential equation
\begin{equation} \label{eqn:isotropicmodulated}
\left(1 - \ell(x)^2 \Delta \right) v = \sigma \sqrt{\ell(x)^d}  w.
\end{equation}
In order to have a well-defined elliptic equation, we require that $\inf_{x\in D} \ell(x) >0$. 
The condition will be fulfilled with the help of an auxiliary transformation. Moreover, $\ell$ needs to be regular enough. 
We consider the cases where $\ell$ has $L^\infty(D)$-sample paths. In addition, we consider the  problems that arise for  rougher sample paths.  
We will construct the $\ell(x)$-model in the following sections.

Let us consider now the discretisation of Equation \eqref{eqn:isotropicmodulated}. It is well-known that white noise can be seen as a distributional derivative of the Brownian sheet $B$. 
Moreover, the measurable linear functionals of white noise can be identified with stochastic integrals. 
Hence, it is natural to discretise white noise as 
\begin{equation} \label{eqn:whitenoisediscrete}
w^N  (x) = \sum_{k=1}^{K_n}   \left( \frac{1}{|A_k|}\int 1_{A_k} (x) dB_x\right) 1_{A_k}(x), 
\end{equation}
where $\cup_{k=1}^{K_n}  A_k  = D$.
The   variance of white noise  $w^N$  at  $x\in A_k$,   is  
\begin{equation*}
w^N \big\vert_{x\in A_k} \sim \mathcal{N}\left(0,|A_k|^{-1 } \right) = \mathcal{N}\left(0,h^{-d} \right),
\end{equation*}
where $h$ is the discretisation step, which we choose to be same along all the coordinate directions.

The only thing we have left to discretise in Equation \eqref{eqn:isotropicmodulated}, is the operator part, for which we can use any standard finite difference methods.
Hence, we can write e.g.\ the one-dimensional discretisation of \eqref{eqn:isotropicmodulated} as 
\begin{equation} \label{eqn:1Dappr}
\begin{split}
\left(1-\ell(x)^2 \Delta \right)v|_{x=jh} & \approx  v^N_j-\ell_j^2\frac{v^N_{j-1}-2v^N_j+v^N_{j+1}}{h^2}  \\ &=  \sigma\sqrt{\ell_j}w^N_j \sim   \mathcal{N}\left(0,\sigma^2\ell_jh^{-1}\right),
\end{split}
\end{equation}
where $jh\in h\mathbb{Z}$ is the discretisation lattice, and $\ell_j:=\ell(jh)$.
This model can be given as a matrix equation $L(\ell^N) v^N=w^N$, where $L(\ell^N)$ is a symmetric sparse matrix.

\subsection{Hypermodels}
\label{subsec:hypermodels}

Let us consider modelling the length-scaling $\ell(x;u)$ with Gaussian random fields.  To achieve an elliptic equation, we  apply  a log-normal model 
\begin{equation} \label{eqn:exp}
\ell(x;u)=\exp(u(x)),
\end{equation}
 where $u$ is a Gaussian random field. For the discrete model on lattice points $x=jh$, we set similarly $\ell^N_j =\exp\left(u^N_j\right)$.

For example, we may choose another Mat\'ern field as a hypermodel for $u$. 
Hence, let $u$ be a zero-mean Mat\'ern field, with constant length-scaling $\ell_0$, and consider its discretisation  $u^N$  by   \eqref{eqn:1Dappr}.  
Then $u^N$ has  covariance matrix $\widetilde C^N$ and we write the discrete hypermodel as  
\begin{equation*}
\begin{split}
 D\left(u^N\right)D\left(v^N \vert \ell^N(u^N)\right) \propto &   \exp\left(-\frac{1}{2}u^T (\widetilde C^N)^{-1} u^N\right)\times \dots \\ & \vert L (\ell^N) \vert\exp \left( -\frac{1}{2}(v^N)^TL(\ell^N) ^TL(\ell^N)v^N\right).
\end{split}
\end{equation*}

\section{Discretisation and convergence of the hypermodel}
\label{sec:discretisation}

Let us now choose a Mat\'ern field hyperprior for $u$ as well as Mat\'ern prior for $v$, and use Equation \eqref{eqn:exp} for length-scaling $\ell$. 
In this section, we will first consider  discretisation and convergence of this hypermodel, and then make some notes, when we modify the hypermodel to have Cauchy walk hyperprior.

As above, let  the realisations of the  random field $u$  be  tempered distributions that satisfy  
\begin{equation}\label{eq:GMRF}
\left(1 - \ell_0 ^2 \Delta \right) u  = \sigma_0 \sqrt{\ell_0^d} \widetilde { w},
\end{equation}
where $ \widetilde  w$ is  $\mathcal S'(\mathbb R^d)$-valued Gaussian white noise on $\mathbb R^d$ and $\ell_0,\sigma_0>0$ are given constants.   We assume   that    $\widetilde  w$ is statistically independent from $w$.

It is easy to verify that $u$ is a measurable transformation of $\widetilde w$ and it  has  almost surely continuous sample paths for $d=1,2$. 
Hence the right hand side of \eqref{eqn:isotropicmodulated} is a well-defined generalised random field with distribution
\begin{equation*}
\mu_{\sqrt{\ell(\cdot ;u)^d}   w } (A) =\int \mu_ { \sqrt{\ell(\cdot; f)^d } w} (A)  \mu_{u}(df)
\end{equation*}
on Borel fields  $A$ (with respect to the weak$^*$-topology) of $\mathcal S'(\mathbb R^d)$.

In the first step, we  approximate the random  field $v$  without approximating $u$ or $\ell$. We  discretised the Laplacian with    finite differences  in  \eqref{eqn:isotropicmodulated}  and, furthermore,  discretised   the white noise $w$.

 Let us define  a suitable mesh space, where the convergence is studied. We equip the space of all real-valued functions $v^N$ on the 
 mesh $h\mathbb Z^d\cap \overline{D}$ with norm 
\begin{equation*}
\left \Vert v^N\right\Vert _{L^2(D_h)} =  \left( \sum_{kh\in  \overline{D} }  h^2 v^N(kh)^2 \right)^\frac{1}{2}.
\end{equation*}
 
We will use the following notations: we denote by $B$ a suitable boundary operator that stands  e.g.\ for the periodic or 
the Dirichlet boundary condition. 
In the next theorem 
\begin{equation*}
w^N= T_h w
\end{equation*}
is the Steklov mollified version of the white noise. That is,  the radonifying transformation 
$T_h$ is defined as 
\begin{equation*}
T_h f =S_1^2 S_2^2 f, 
\end{equation*}
where 
\begin{equation*}
S_1  f(x_1,x_2)=\frac{1}{h}\int_{x_1 -h/2} ^{x_1+h/2} f(t,x_2)d t
\end{equation*}
and
\begin{equation*}
S_2  f(x_1,x_2)=\frac{1}{h}\int_{x_2 -h/2} ^{x_2+h/2} f(x_1,t)d t
\end{equation*}
for all $f\in L^2(D)$.

\begin{lemma}\label{lemma:bound}
Let $u$ be a Gaussian random field that satisfies 
\eqref{eq:GMRF}. 
Let  $v(x;u)$ satisfy 
\begin{equation}\label{eq:limit1}
\left(\ell(x;u)^{-2} -  \Delta \right) v  = \sigma_0 \ell(x;u)^{d/2-2}   w \text{ in } D 
\end{equation}
with the periodic boundary condition, where $\ell(x;u)= g(u(x))$ and 
\begin{equation*}
g(s)  =\exp(s)
   \end{equation*}
  Let $v^N(x;u)$ be    \begin{equation}\label{eq:approx1}
\left(\ell(x;u) ^{-2} -  \Delta_N \right) v^N (x;u)   = \sigma_0  \ell(x;u)^{d/2-2}   w^N ,  
\end{equation}
on $h\mathbb Z^d \cap D$,  with the boundary condition $B v^N =0$ on $h Z^d\cap \partial D$.

Then $L^2(L^2(D_h), P)$-norm  of  $v^N -v$ converges to zero   
  as $h\rightarrow 0$.  
\end{lemma}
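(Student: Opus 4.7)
The plan is to condition on the hyperfield $u$. For $d\le 2$, the Matérn hyperprior yields $u\in C^0(\overline{D})$ almost surely, so on an event $\Omega_0$ of full probability the coefficient $\ell(x;u)=e^{u(x)}$ is continuous and satisfies $0<c(u)\le\ell(x;u)\le C(u)<\infty$ uniformly on $\overline{D}$. On $\Omega_0$ the operator $\ell^{-2}-\Delta$ with periodic boundary conditions is a uniformly elliptic, symmetric, positive, self-adjoint operator with continuous coefficients, so it admits a Green's kernel $G^{u}$; analogously the discrete operator $\ell^{-2}-\Delta_N$ admits a discrete Green's kernel $G^{u}_N$ on $h\mathbb Z^d\cap\overline{D}$. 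The plan is to establish conditional convergence $\mathbb E[\|v^N-v\|_{L^2(D_h)}^2\mid u]\to 0$ on $\Omega_0$ and then apply dominated convergence to remove the conditioning.

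For the conditional analysis, represent both fields as Wiener integrals. For $\omega\in\Omega_0$,
\begin{equation*}
v(x_j)=\sigma_0\int_D G^{u}(x_j,y)\,\ell(y;u)^{d/2-2}\,dB_y,
\qquad
v^N(x_j)=\sigma_0\sum_k G^{u}_N(x_j,x_k)\,\ell(x_k;u)^{d/2-2}\int_{A_k}dB_y,
\end{equation*}
where the second identity uses the Steklov definition $w^N_k=|A_k|^{-1}\int_{A_k}dB$. Itô isometry then yields
\begin{equation*}
\mathbb E\bigl[|v(x_j)-v^N(x_j)|^2\mid u\bigr]
=\sigma_0^2\sum_k\int_{A_k}\bigl[G^{u}(x_j,y)\ell(y;u)^{d/2-2}-G^{u}_N(x_j,x_k)\ell(x_k;u)^{d/2-2}\bigr]^2dy.
\end{equation*}
Summing with weight $h^d$ over $x_j$ gives $\mathbb E[\|v^N-v\|_{L^2(D_h)}^2\mid u]$ as a double sum that can be split (using $(a+b)^2\le 2a^2+2b^2$) into a finite-difference error, governed by $\|G^{u}-G^{u}_N\|$ on the product mesh, and a coefficient-freezing error controlled by the modulus of continuity of $y\mapsto\ell(y;u)^{d/2-2}$.

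For the finite-difference piece I would invoke standard convergence of discrete Green's kernels for uniformly elliptic symmetric second-order operators with continuous coefficients and periodic boundary condition: on $\Omega_0$, $G^{u}_N\to G^{u}$ in $L^2(D\times D)$ (and the discrete kernels enjoy a uniform $L^2$-bound determined only by $c(u)$ and $|D|$, via the spectral gap of $\ell^{-2}-\Delta_N$). For the coefficient-freezing piece the continuity of $\ell(\cdot;u)$ on the compact set $\overline{D}$ gives uniform continuity, so replacing $\ell(y)^{d/2-2}$ by $\ell(x_k)^{d/2-2}$ on cells of diameter $O(h)$ introduces an $o(1)$ factor, which combined with the $L^2$-bound on $G^{u}_N$ sends this contribution to zero as $h\to 0$.

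Finally, to pass from the conditional to the unconditional statement, observe that $\|v^N\|_{L^2(D_h)}^2$ and $\|v\|_{L^2(D_h)}^2$ are controlled by a function of $\sup_{\overline{D}}\ell(\cdot;u)^{d-4}$ and $\inf_{\overline{D}}\ell(\cdot;u)^{-2}$ that is $P$-integrable (because $u$, being Gaussian with $C^0$ paths, has exponential moments of its sup-norm). This furnishes a $P$-integrable dominating majorant, so dominated convergence yields $\mathbb E\|v^N-v\|_{L^2(D_h)}^2\to 0$. The main obstacle is securing the $u$-uniform bounds on $G^{u}_N$ and controlling how the ellipticity constants depend on $\|u\|_\infty$; once the ellipticity constants are expressed explicitly as $e^{\pm 2\|u\|_\infty}$, the Fernique-type integrability of $\|u\|_\infty$ closes the dominated-convergence argument.
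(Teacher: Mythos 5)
Your proposal is correct and shares the paper's skeleton --- condition on $u$ (using independence of $u$ from $w$), reduce the conditional second moment to a deterministic approximation statement about the elliptic solution operators via the white-noise isometry, and then integrate out $u$ using the fact that the ellipticity constants $e^{\pm 2\Vert u\Vert_\infty}$ have all moments by Fernique. Where you diverge is in how the deterministic step and the de-conditioning are executed. The paper expresses the conditional error as $h^2\sum_{kh}\sup_{\Vert f\Vert_{L^2}\le 1}(v^N(kh;u,f)-v(kh;u,f))^2$ (the adjoint/operator-norm form of your It\^o-isometry identity), then cites a finite-difference convergence theorem giving the pointwise bound $Ch^2\Vert v(u,f)\Vert_{W^2_2}^2$ and closes with an elliptic regularity estimate $\Vert v(u,f)\Vert_{W^2_2}\le C(u)\Vert f\Vert_{L^2}$ with $C(u)\in L^p(P)$; this yields the explicit rate $\mathbb E\Vert v^N-v\Vert^2\le C|D|h^2$ rather than mere convergence. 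You instead decompose the representing kernel into a Green's-kernel discretisation error plus a coefficient-freezing error and close qualitatively with dominated convergence. Both routes work; note only that your appeal to ``standard convergence of discrete Green's kernels in $L^2(D\times D)$'' carries exactly the same citation burden as the paper's two lemmas combined (it is the Hilbert--Schmidt convergence of the solution operators, which for continuous zeroth-order coefficients and constant-coefficient principal part follows from $H^2$ regularity plus the classical FD estimate), so it should be backed by the same kind of reference; and your coefficient-freezing term is a genuine extra piece the paper avoids by keeping $\ell(\cdot;u)$ unfrozen in the discrete right-hand side of \eqref{eq:approx1}. The paper's quantitative bound buys a convergence rate and an $L^2(P)$ random constant, which is what lets it dispense with a separate dominated-convergence argument.
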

\begin{proof}

Conditioning with $u$  inside $L^2(L^2(D_h),P)$-norm gives us 
$$
\mathbb E \left[  \left\Vert v^N(\cdot;  u )- v(\cdot; u) \right\Vert_{L^2(D_h)}^2    \right] =
\mathbb E \left[  \mathbb E \left[  \left\Vert v^N (\cdot; u)  - v (\cdot;  u) \right\Vert _{L^2(D_h)}^2
| u \right] \right].  
$$
Recall that  $u$ is a radonifying transformation of $\widetilde w$, where $\widetilde w$  is statistically independent from $w^N$. 
Then also $u$ and $w^N$  are statistically independent. 
Moreover, $v^N$ is a Carath\'eodory function of  $(u,w^N)$ (which is radonifying with respect to the second variable).  
This means that  conditioning   $v^N$ with $u=u_0$, where $u_0\in C(\overline D)$,  only replaces the random  coefficient $\ell(\cdot;u)$ in \eqref{eq:approx1} with a fixed continuous function $\ell(\cdot ;u_0)$.  
The same holds for $v^N, w^N$ replaced with $v,w$. 

Let us  denote  with $v(\cdot; u,f)$  and $v^N(\cdot ; u,f)$   the solutions of 
\eqref{eq:limit1} and \eqref{eq:approx1}, respectively, when the white noise load 
$w$ is replaced with a function $f\in L^2(D)$. 

By applying adjoints of the solution operators, it is easy to verify that 
$$
\mathbb E \left[ \left\Vert v^N - v \right\Vert _{L^2(D_h)}^2  \big \vert u \right]  =       h^2 \sum_{kh\in D } \sup_{\Vert f\Vert_{L^2}\leq 1 } \left(v^N (kh; u, f)- v (kh; u,f)\right)^2    
$$
due to linearity of the elliptic problem. 

  By the usual convergence results for the finite-difference scheme (see p.\ 214 in \cite{MR3136501}, with straightforward 
  changes for the periodic case), we obtain
\begin{equation}\label{eq:upperbound}
 \left(v^N (kh; u, f)- v (kh; u,f)\right)^2  \leq  C h^2 \Vert v(u,f)\Vert_{W^2_2(D)}^2.
\end{equation}
By inserting elliptic estimates  (see Lemma \ref{lemma:ell} below) into \eqref{eq:upperbound}, we get the upper bound
$$
\mathbb E \left[ \left\Vert v^N - v \right\Vert _{L^2(D_h)}^2  \big \vert u \right]   \leq  C_u  \sum_{kh\in D } \sup_{\Vert f\Vert_{L^2}\leq 1 } h^4 
  \Vert v(u,f)\Vert_{L^2(D)}^2 \leq  C_u |D| h^2.  
$$
where the constant $C_u\in L^2(P)$.
\end{proof}

\begin{remark}
The above lemma can be easily generalised for the case when $u$ has almost surely bounded 
sample paths  and  $g$ is bounded from above and below with positive constants.
Hence, we obtain similarly the convergence for the Cauchy walk. 
\end{remark}
For completeness, we recall the following elliptic estimate. 
\begin{lemma}\label{lemma:ell}
$\Vert v(u,f) \Vert _{W^2_2(D)}\leq C \Vert f \Vert _{L^2(D)}$, 
where the constant $C\in L^p(P)$ for all $p\geq 1$.
\end{lemma}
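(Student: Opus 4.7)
The plan is to prove the estimate pathwise in $u$ with a constant of the form $C_1 \exp(C_2 \|u\|_{\infty})$, and then invoke Fernique's theorem to conclude the $L^p(P)$ integrability. Fix a realisation $u = u_0 \in C(\overline D)$ (which holds almost surely for $d=1,2$ by the assumed regularity of the hyperprior), and set $a(x) := \ell(x;u_0)^{-2} = e^{-2u_0(x)}$ and $g(x) := \sigma_0 \ell(x;u_0)^{d/2-2} f(x)$. Then $v = v(u_0, f)$ satisfies the linear elliptic equation $(a - \Delta) v = g$ on $D$ with periodic boundary conditions, and the pointwise bounds
\begin{equation*}
e^{-2\|u_0\|_\infty} \leq a(x) \leq e^{2\|u_0\|_\infty}, \qquad |g(x)| \leq \sigma_0 \, e^{|d/2-2|\,\|u_0\|_\infty} |f(x)|
\end{equation*}
are immediate consequences of $\ell = \exp(u_0)$.

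First I would derive the $H^1$ bound via the Lax--Milgram / energy identity: testing the equation against $v$ gives $\int (a v^2 + |\nabla v|^2) = \int g v$, so using $a \geq a_0 := e^{-2\|u_0\|_\infty}$ yields $\|v\|_{L^2} \leq a_0^{-1} \|g\|_{L^2}$ and $\|v\|_{H^1}^2 \leq a_0^{-2} \|g\|_{L^2}^2$ (up to harmless constants). To bootstrap to $W^2_2$, rewrite the equation as $-\Delta v = g - a v$ with periodic boundary data, and apply the standard $H^2$ regularity for the Laplacian on the torus (proved straightforwardly via Fourier series), obtaining
\begin{equation*}
\|v\|_{W^2_2(D)} \leq C\bigl(\|g\|_{L^2} + \|a\|_\infty \|v\|_{L^2}\bigr) \leq C' \bigl(1 + \|a\|_\infty/a_0\bigr) \|g\|_{L^2}.
\end{equation*}
Substituting the bounds on $a$ and $g$ above then gives an estimate of the form $\|v(u_0,f)\|_{W^2_2(D)} \leq K \exp(\kappa \|u_0\|_\infty) \|f\|_{L^2(D)}$ with constants $K,\kappa$ depending only on $d$ and $\sigma_0$.

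It remains to show the random constant $C(u) := K \exp(\kappa \|u\|_\infty)$ lies in $L^p(P)$ for every $p \geq 1$. Since $u$ is a Gaussian random field with almost surely continuous sample paths on $\overline D$, it may be regarded as a Gaussian random element of the separable Banach space $C(\overline D)$, so Fernique's theorem (equivalently, the Borell--TIS concentration inequality) yields the existence of $\alpha > 0$ with $\mathbb E[\exp(\alpha \|u\|_\infty^2)] < \infty$. By the elementary inequality $p\kappa \|u\|_\infty \leq \alpha \|u\|_\infty^2 + (p\kappa)^2/(4\alpha)$, all exponential moments $\mathbb E[\exp(p \kappa \|u\|_\infty)]$ are finite, which is precisely the claim. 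The main obstacle is not conceptual but book-keeping: keeping careful track of the exponential dependence of the elliptic constants on $\|u\|_\infty$ so that Fernique's theorem applies uniformly; in particular, the ratio $\|a\|_\infty/a_0 \leq e^{4\|u\|_\infty}$ is the dominant contribution to $\kappa$.
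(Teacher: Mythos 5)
Your proof is correct and follows the same overall strategy as the paper's: establish a pathwise elliptic estimate whose constant depends only on $\sup_{x}\ell(x;u)$ and $\inf_{x}\ell(x;u)$, then show that this random constant lies in every $L^p(P)$ using Gaussian tail bounds for $\Vert u\Vert_\infty$. The execution differs in two respects. For the elliptic step, the paper rewrites the problem as an integral equation $v+G_{c_0}(\ell(\cdot;u)-c_0)v=G_{c_0}\ell(\cdot;u)^{d/2-2}f$ with the shifted Green's operator $G_{c_0}=(-\Delta+c_0(u))^{-1}$, estimates $\Vert G_{c_0}\Vert_{L^2\to H^2}$ by Fourier analysis on the torus, and controls $\Vert v\Vert_{L^2}$ via the Babu\v{s}ka--Lax--Milgram theorem; you instead use the energy identity for the $L^2$/$H^1$ bound and bootstrap through $-\Delta v=g-av$ with the torus estimate $\Vert v\Vert_{H^2}\leq C\left(\Vert \Delta v\Vert_{L^2}+\Vert v\Vert_{L^2}\right)$. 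This is equivalent in substance (note the unshifted Laplacian is not invertible on the torus, but the two-sided a priori estimate you need does not require invertibility, and the extra $\Vert v\Vert_{L^2}$ term your display omits is already controlled by $a_0^{-1}\Vert g\Vert_{L^2}$, so only the value of $\kappa$ changes). For the integrability step, the paper cites Charrier's results on lognormal coefficients, whereas you derive the conclusion directly from Fernique's theorem applied to $u$ as a Gaussian element of $C(\overline D)$ (using its a.s.\ continuous paths for $d=1,2$); this is precisely the mechanism underlying the cited result, so making it explicit buys a more self-contained argument at no extra cost.
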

\begin{proof}
Take $\sigma_0=1$ for simplicity.
Let 
$$
\left(\ell(x;u)^{-2} -  \Delta \right) v  =  \ell(x;u)^{d/2-2}   f 
$$
with periodic boundary conditions. Let us write a corresponding 
integral equation with the help of the operator $ G_{c_0}=(-\Delta + c_0(u))^{-1}$, where 
$c_0(u)=\inf_{x\in D}(\ell(x,u))^2/2$. Then 
$$
v + G_{c_0}(\ell(\cdot; u)- c_0)v =  G_{c_0}\ell(\cdot;u)^{d/2-2} f. 
$$
With Fourier techniques on the torus, we can show that $G_{c_0}: L^2(D)\rightarrow H^2(D)$.
Moreover, the norm of the mapping is bounded by the maximum of $1$ and $\sqrt{c_0^{-1}}$. 
Therefore,
\begin{eqnarray*}
\Vert v \Vert_{H^2}^2 &\leq & \max(1, c_0^{-1}) \sup_{x\in D }(\ell(x;u)^{d-4}) \Vert f\Vert_{L^2}^2 + \sup_{x\in D }(\ell(x; u)^2 - c_0)^2 \Vert v\Vert_{L^2}^2 \\
&\leq& \max(1, c_0^{-1})  \sup_{x\in D }(\ell(x;u)^{d-4}) \Vert f\Vert_{L^2}^2 +  \frac{\sup(\ell(\cdot; u)^2- c_0)^2}{
\inf (\ell(\cdot; u)^2)}\Vert f\Vert_{L^2}^2
\end{eqnarray*}
by Babu\^ska-Lax-Milgram theorem.  The multipliers of  the norm belong to $L^p(P)$ for all $p\geq 1$ \cite{charrier}.
\end{proof}

Next, also  Equation \eqref{eq:GMRF} is  discretised on the equidistant mesh $h\mathbb Z^d$ by   finite differences and
discretisation of the white noise \eqref{eqn:whitenoisediscrete}. We further modify 
the  equation   
\begin{equation*}
\left(1 - \ell_0 ^2 \Delta_h  \right)  u^N (hk)  = \sigma_0 \sqrt{\ell_0^d} w^N (hk),  \;  k\in\mathbb Z^d, 
\end{equation*}
for discrete $u^N(hk)$ by expressing the discrete white noise $w^N$
as  the measurable  transformation 
$
w^N (hk)  = (T_h  w) (hk)  
$
of the continuous-parameter white noise (for details on measurable transformations, see \cite{Bogachev1998}). 

\begin{theorem}\label{th:converg}
Let  $v(x;u)$ satisfy 
\begin{equation}\label{eq:limit2}
\left(1 - \ell(x;u) ^2 \Delta \right) v  = \sigma_0 \sqrt{\ell(x;u)^d}   w \text{ in } D 
\end{equation}
with the periodic boundary condition where $\ell(x;u)= g(u(x))$ and 
\begin{equation*} 
g(s)  = \exp(s)
\end{equation*}
  Let $v^N(x;u^N)$ satisfy
\begin{equation*}
\left(1 - \ell(x;u^N) ^2 \Delta_N \right) v^N (x;u^N)   = \sigma_0 \sqrt{\ell(x;u^N)^d}   w^N ,  
\end{equation*}
on $h\mathbb Z^d \cap D$,  with the periodic boundary.

Then $v^N (\cdot; u^N)$ converges to  $v$ in 
$L^2(L^2(D_h), P)$  as $N\rightarrow \infty$.  
\end{theorem}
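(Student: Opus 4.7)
The plan is to reduce the theorem to Lemma~\ref{lemma:bound} via a triangle inequality that isolates the extra error coming from discretising the hyperfield. Write
\begin{equation*}
\bigl\Vert v^N(\cdot;u^N)-v(\cdot;u)\bigr\Vert_{L^2(D_h)}
\leq
\bigl\Vert v^N(\cdot;u^N)-v^N(\cdot;u)\bigr\Vert_{L^2(D_h)}
+\bigl\Vert v^N(\cdot;u)-v(\cdot;u)\bigr\Vert_{L^2(D_h)}
\end{equation*}
inside the $L^2(P)$-norm. The second summand is already handled: after dividing the defining PDE by $\ell(x;u)^2$, the pair $(v,v^N)$ fits the hypotheses of Lemma~\ref{lemma:bound}, so it tends to zero in $L^2(L^2(D_h),P)$. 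All the new work is in the first summand, which measures sensitivity of the \emph{discrete} solution to perturbation of the coefficient $\ell$.

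For that sensitivity step, I would subtract the two discrete equations driven by the same $w^N$ but with coefficients $\ell(\cdot;u^N)$ and $\ell(\cdot;u)$. Writing $\delta v^N=v^N(\cdot;u^N)-v^N(\cdot;u)$ and dividing through by $\ell(\cdot;u^N)^2$, one obtains
\begin{equation*}
\bigl(\ell(\cdot;u^N)^{-2}-\Delta_N\bigr)\delta v^N
= \bigl[\ell(\cdot;u^N)^{-2}-\ell(\cdot;u)^{-2}\bigr] v^N(\cdot;u)
+\sigma_0\bigl[\ell(\cdot;u^N)^{d/2-2}-\ell(\cdot;u)^{d/2-2}\bigr] w^N,
\end{equation*}
and I would then invoke the discrete analogue of the elliptic estimate in Lemma~\ref{lemma:ell} (it carries over with the same constants because the symbol of $\Delta_N$ is bounded below by the symbol of $-\Delta$ on the dual mesh). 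Conditioning on $u$ and $u^N$ and using the mean value theorem on $g=\exp$, each coefficient difference is bounded by $C(u,u^N)\Vert u^N-u\Vert_{L^\infty(D_h)}$, where $C(u,u^N)$ depends only on $\sup|u|,\sup|u^N|$ through $\exp$; Fernique's theorem, together with the Gaussian tails of the discrete Matérn field, keeps $C(u,u^N)$ in every $L^p(P)$.

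It remains to verify that $\Vert u^N-u\Vert_{L^\infty(D_h)}\to 0$ in probability (and in $L^p(P)$). Since $u^N$ is the discretisation of the \emph{same} equation \eqref{eq:GMRF} driven by the same white noise, I would apply the $d\leq 2$ case of Lemma~\ref{lemma:bound} to $u$ (with constant $\ell_0$), promote the $L^2(D_h)$-convergence to uniform convergence using the Sobolev embedding $H^s\hookrightarrow C(\overline D)$ for $s>d/2$ (which is available since $u\in C(\overline D)$ a.s.\ and the discrete solutions have matching regularity), and Steklov-mollify to compare $u^N$ at mesh points with the continuous $u$. Combining these pieces with dominated convergence (justified by the $L^p$ bounds above) gives $\mathbb E\Vert\delta v^N\Vert_{L^2(D_h)}^2\to 0$.

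The main obstacle I expect is precisely this last point: controlling $\exp(u^N)-\exp(u)$ uniformly despite the fact that $u$ is unbounded. The argument depends on having simultaneous $L^p(P)$ control of $\sup_h\Vert u^N\Vert_{L^\infty(D_h)}$ and of the solution operator norms appearing in the discrete elliptic estimate, so the bulk of the technical work is really in establishing uniform-in-$h$ stochastic bounds rather than in the deterministic PDE analysis, which is mostly a rerun of Lemma~\ref{lemma:bound} and Lemma~\ref{lemma:ell}.
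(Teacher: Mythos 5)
Your decomposition is the mirror image of the one in the paper: you write $v^N(\cdot;u^N)-v(\cdot;u)$ as $\bigl[v^N(\cdot;u^N)-v^N(\cdot;u)\bigr]+\bigl[v^N(\cdot;u)-v(\cdot;u)\bigr]$ and do the coefficient-perturbation analysis at the \emph{discrete} level, whereas the paper writes it as $\bigl[v^N(\cdot;u^N)-v(\cdot;u^N)\bigr]+\bigl[v(\cdot;u^N)-v(\cdot;u)\bigr]$ and does the perturbation analysis at the \emph{continuous} level, comparing the two continuous solutions through the Green's operators $G_{\ell(\cdot;u)}$, $G_{\ell(\cdot;u^N)}$ and the resolvent identity $G_{\ell(\cdot;u)}-G_{\ell(\cdot;u^N)}=G_{\ell(\cdot;u)}\bigl(\ell(\cdot;u^N)-\ell(\cdot;u)\bigr)G_{\ell(\cdot;u^N)}$, with uniform $L^2\to H^{d/2+\delta}$ bounds. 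Both routes reduce the pure discretisation error to Lemma \ref{lemma:bound}, and both ultimately hinge on the same two stochastic ingredients you correctly identify as the crux: uniform (sup-norm) convergence of $u^N$ to $u$, obtained from convergence in discrete Sobolev norms of order $s>d/2$ rather than from mesh-$L^2$ convergence alone, and $L^p(P)$ integrability of the resulting random constants. What the paper's continuous-level route buys is that it only ever needs the elliptic calculus of Lemma \ref{lemma:ell} on the torus, at the price of introducing a continuous interpolation $v(\cdot;u^N)$; your discrete-level route avoids that interpolation but instead requires a uniform-in-$h$ discrete analogue of Lemma \ref{lemma:ell}, which is true but is an additional piece of machinery the paper does not have to supply.

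One step in your argument needs repair. In the equation for $\delta v^N$ the right-hand side carries the term $\sigma_0\bigl[\ell(\cdot;u^N)^{d/2-2}-\ell(\cdot;u)^{d/2-2}\bigr]w^N$, and you cannot estimate this by $\bigl\Vert\ell(\cdot;u^N)^{d/2-2}-\ell(\cdot;u)^{d/2-2}\bigr\Vert_{L^\infty}\Vert w^N\Vert_{L^2(D_h)}$, because $\mathbb E\Vert w^N\Vert_{L^2(D_h)}^2$ is of order $h^{-d}\cdot h^d\cdot(\text{number of mesh points})$ and diverges as $h\to 0$. You must instead use the device that the paper uses inside the proof of Lemma \ref{lemma:bound}: conditionally on $(u,u^N)$ the map $w^N\mapsto\delta v^N$ is linear, so
\begin{equation*}
\mathbb E\Bigl[\Vert\delta v^N\Vert_{L^2(D_h)}^2\,\big\vert\,u,u^N\Bigr]
= h^d\sum_{kh\in D}\sup_{\Vert f\Vert_{L^2}\leq 1}\bigl(\delta v^N(kh;f)\bigr)^2,
\end{equation*}
where $\delta v^N(\cdot;f)$ solves the same difference of equations with $w^N$ replaced by a deterministic unit-norm load $f$; only then do your discrete elliptic estimate and the mean-value bound on $\exp$ apply with a finite right-hand side. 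With that adjustment, and with the discrete Sobolev embedding made explicit for the promotion of $\Vert u^N-u\Vert_{L^2(D_h)}\to 0$ to $\Vert u^N-u\Vert_{L^\infty(D_h)}\to 0$, your argument closes and is a legitimate alternative to the paper's proof.
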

\begin{proof}
Consider the  norm 
\begin{equation}\label{eq:converg} 
\begin{split} 
\mathbb E \left[ \left \Vert v^N(\cdot;  u^N  )- v(\cdot; u) \right\Vert_{L^2(D_h)}^2    \right] \leq & 2 
 \mathbb E \left[  \left\Vert v^N(\cdot; u ^N)- v(\cdot; u^N ) \right\Vert_{L^2(D_h)}^2  \right] \\
   & +  2 \mathbb E \left[ \left\Vert v(\cdot; u^N )- v(\cdot; u) \right\Vert_{L^2(D_h)}^2    \right],  
 \end{split}
\end{equation}
where $v(\cdot; u^N)$ solves \eqref{eq:limit2} for some continuous  pointwise convergent  interpolation of  $u^N$ in place of $u$.  By Lemma \ref{lemma:bound}, the 
first term of \eqref{eq:converg} vanishes when $h\rightarrow 0$. We show  that the 
second 
term of \eqref{eq:converg} vanishes as $h\rightarrow 0$.  Indeed,
$$
\mathbb E \left[ \left\Vert v (\cdot,u^N)  - v(\cdot, u)  \right\Vert _{L^2(D_h)}^2  \big \vert u \right]  =       h^2 \sum_{kh\in D } \sup_{\Vert f\Vert_{L^2}\leq 1 } \left(v (kh; u^N , f)- v (kh; u,f)\right)^2 .   
$$
If we can show that $v (x; u^N , f)- v (x; u,f)$ converges to zero uniformly with respect to $x $ and  $f$ , we 
are done. The term can be considered with the help of Sobolev spaces   
\begin{equation*}
\begin{split}
 v (x; u^N , f)- v (x; u,f) &= \left(  v (\cdot ; u^N , f)- v (\cdot ; u,f), \delta_ x  \right)_{H^{d/2+\delta, -d/2 -\delta}}  \\
 &\leq 
  C  \Vert  v (\cdot ; u^N , f)- v (\cdot ; u,f) \Vert _{H^{d/2+\delta}}
\end{split}
\end{equation*}
We denote the Green's operator for $\ell(x;u) ^{-2}  - \Delta $ with $G_{\ell(\cdot; u)}$ and 
for  $  \ell(x;u^N) ^{-2}  -\Delta $ with $G_{\ell(\cdot; u^N)}$. Then 
\begin{equation*}
\begin{split}
\left \Vert  v (\cdot ; u^N , f)- v (\cdot ; u,f) \right\Vert _{H^{d/2+\delta}} & \leq 
  \left \Vert \left( G_{\ell(\cdot; u)}   - G_{\ell(\cdot; u^N)} \right)  \sqrt{\ell(\cdot;u)^{d-1}} f \right\Vert _{H^{d/2+\delta}}
  \\  &+   \left \Vert G_{\ell(\cdot; u^N )}  \left(  \sqrt{\ell(\cdot;u)^{d-1}}  -   \sqrt{\ell(\cdot;u^N)^{d-1}} \right)  f \right\Vert 
   _{H^{d/2+\delta}}. 
\end{split}
\end{equation*}
 By using integral equation techniques, we can show that 
$\Vert G_{\ell(\cdot; u^N )}\Vert_ {L^2, {H^{d/2+\delta}}} $ are uniformly bounded with respect to $N$, and the 
upper bound belongs to $L^2(P)$. The uniform boundedness of  $\sup_{x\in D } \ell(x;u^N)$ follows 
from Sobolev space estimates and convergence of $u^N$ to $u$ in discrete Sobolev norms. 
Hence, the second term converges.  The first term converges  similarly since 
 $$
 G_{\ell(\cdot; u)}   - G_{\ell(\cdot; u^N)}  =  G_{\ell(\cdot; u)} \left(\ell(\cdot;u^N)- \ell(\cdot;u)\right) G_{\ell(\cdot; u^N)}. 
 $$
 \end{proof}

\begin{remark}
We can show that the Steklov mollification can be replaced with the weaker mollification \eqref{eqn:whitenoisediscrete} with  similar technique as in  the proof of Theorem \ref{th:converg}.
\end{remark}

\begin{remark}
Theorem \ref{th:converg} holds also for the Cauchy walk, when $\ell$ is uniformly bounded from above and below.
\end{remark}

\section{Rough hypermodels}
\label{section:roughhypermodels}

In addition to the models mentioned above, we may wish to use a Cauchy noise, a Cauchy walk, or, white noise, which have rougher sample paths than the previously discussed examples.
For the discrete white noise, see Equation \eqref{eqn:whitenoisediscrete}. 

Let us consider a one-dimensional Cauchy walk and its discretisation \cite{Markkanen2016}. 
The Cauchy walk 
$u(x)$ is  an $\alpha$-stable L\'evy motion with $\alpha=1$ defined  by
$$
u(x)=M([0,x]),
$$ 
where $M(A)$ has Cauchy probability density function 
$$
f(x)= \frac{|A|}{ \pi (|A|^2+ x^2)}
$$
for all Borel sets $A\subset \mathbb R_+$. We denote $u(x)\sim \mathrm{Cauchy}(|x|,0)$. The Cauchy walk 
is right-continuous and has finite left limits.  The discretisation of Cauchy walk is based on independent increments 
\begin{equation*}
u(hj)-u(h(j-1)) \sim \mathrm{Cauchy}(h,0).
\end{equation*}
We note that $\ell(x;u)$ is not stationary.
To control the length-scaling in the elliptic equation \eqref{eqn:isotropicmodulated}, we make a transformation 
$\ell(x;u)= g(u(x)),$
where 
\begin{equation} \label{eqn:cauchymap}
g(s)=\frac{a}{b+c \vert s\vert} +d,
\end{equation}
and $d>0$ is a fixed small constant, and $a,b,c>0$ are suitably chosen constants. 
The corresponding discretised hypermodel can then be constructed as
\begin{equation*}
\begin{split}
 D(u^N)D(v^N \vert \ell^N(u^N))  \propto& \prod_{j=1}^N \frac{h}{h^2+(u_j^N-u_{j-1}^N)^2} \times\dots \\ &{\vert  L (\ell^N) \vert}\exp\left(-\frac{1}{2}(v^N)^TL(\ell^N) ^TL(\ell^N)v^N\right).
 \end{split}
\end{equation*}
We note again that we have included the normalisation constant as $L(\ell^N)$-matrix depends on the length-scaling $\ell^N$, i.e.\ it is not a constant.
Similarly, we could use discrete Cauchy noise, which is an iid process with rougher sample paths. For  discussion of the Cauchy noise for Bayesian statistical inverse problems, see  Sullivan 2016 \cite{Sullivan2016}.
Using again Equation \eqref{eqn:cauchymap}, discrete Cauchy noise $u^N$ has typically values around zero, or 'sporadically' some big values. 
Hence, $\ell^N$ is typically either long, or 'sporadically' short,  i.e.\ the chosen  hypermodel promotes either long or short length-scaling.
We note that  transformation \eqref{eqn:cauchymap} is  constructed to be symmetric with respect to zero.

\subsection{Realisations}

In Figure \ref{fig:foto1}, we have plotted realisations of Cauchy and Gaussian process hyperparameters $\ell_\omega^N$, the resulting covariance matrices and realisation of $v^N$. 
The realisations $v_\omega^N$ clearly have non-stationary features, parts where we have highly oscillatory features and edges, and then smoother parts.
In the Bayesian inversion analysis itself, i.e.\ in the posterior distribution, the $\ell^N$ is a parameter vector to be estimated, and hence to find where the non-stationarities, like the edges are located.

\begin{figure}[htp]
      \centering
      \subcaptionbox{Length-scale realisation $\ell^N_\omega$ given Cauchy walk hyperparameter realisation $u^N_\omega$.}
              {\includegraphics[width=0.32\textwidth]{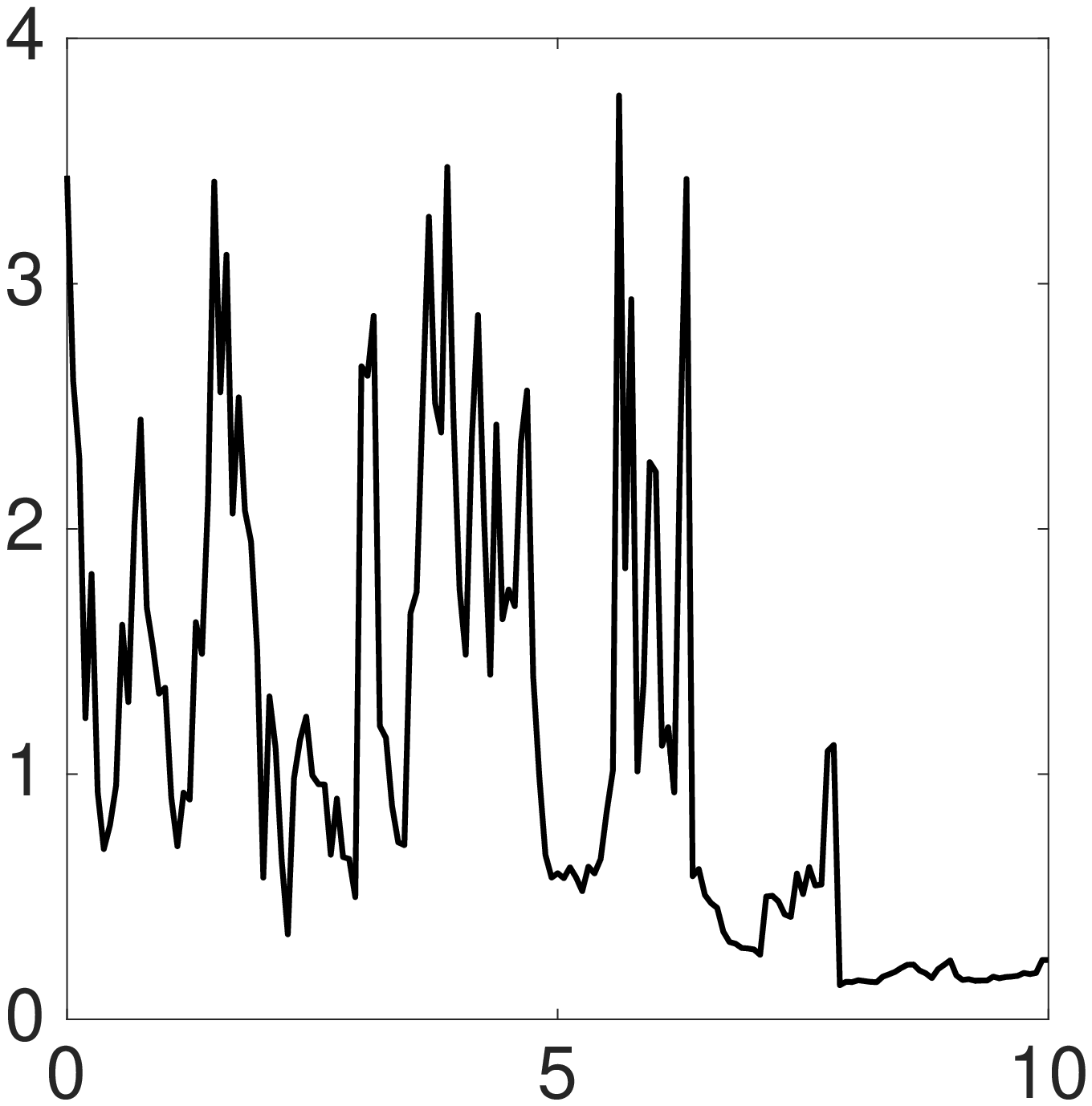}}
      \subcaptionbox{Mat\'ern covariance, given Cauchy walk hyperparameter realisation $u^N_\omega$.}
        {\includegraphics[width=0.32\textwidth]{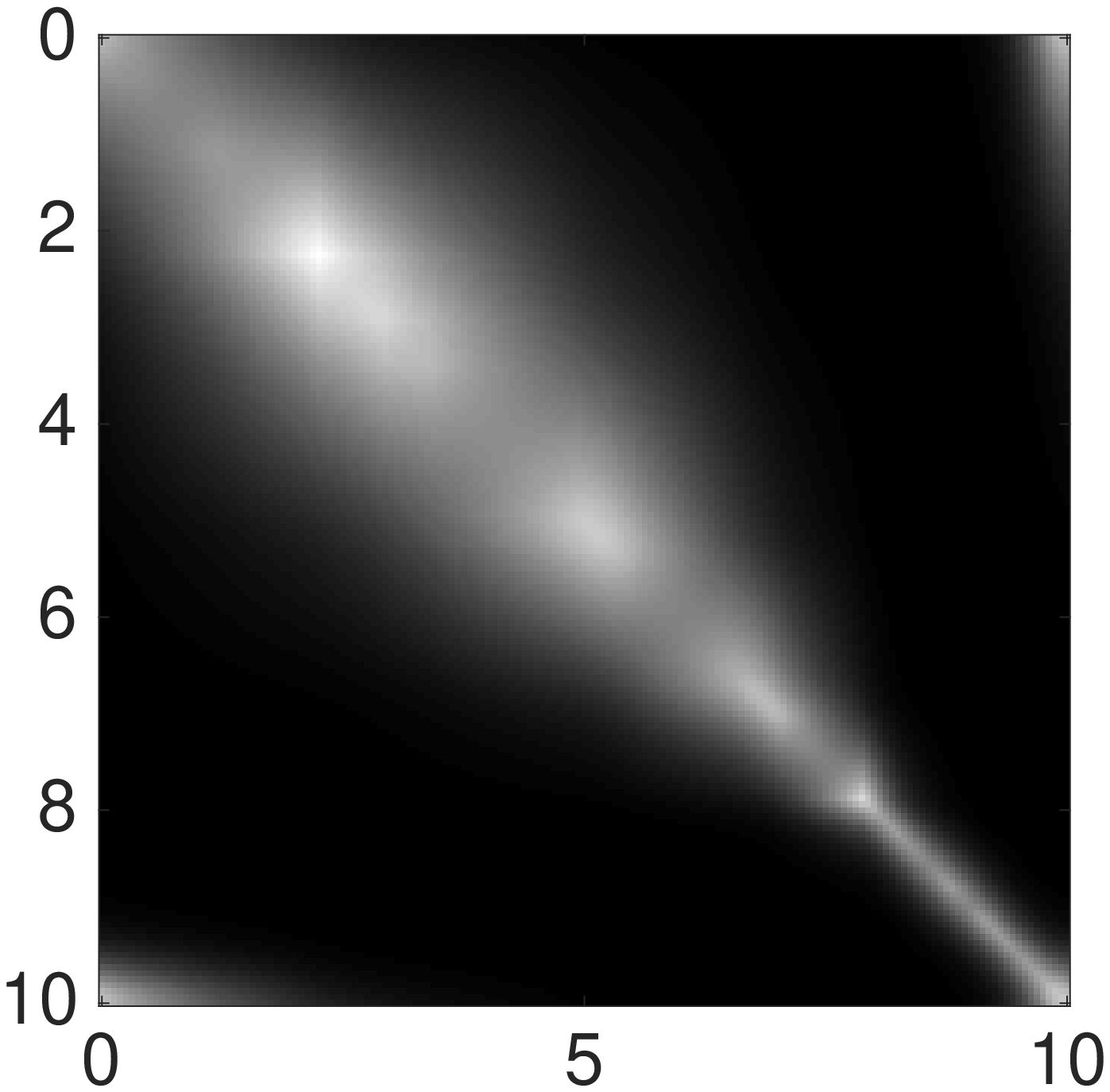}} 
      \subcaptionbox{Realisations $v^N_\omega$ given Cauchy walk hyperparameter $u^N_\omega$.}
        {\includegraphics[width=0.32\textwidth]{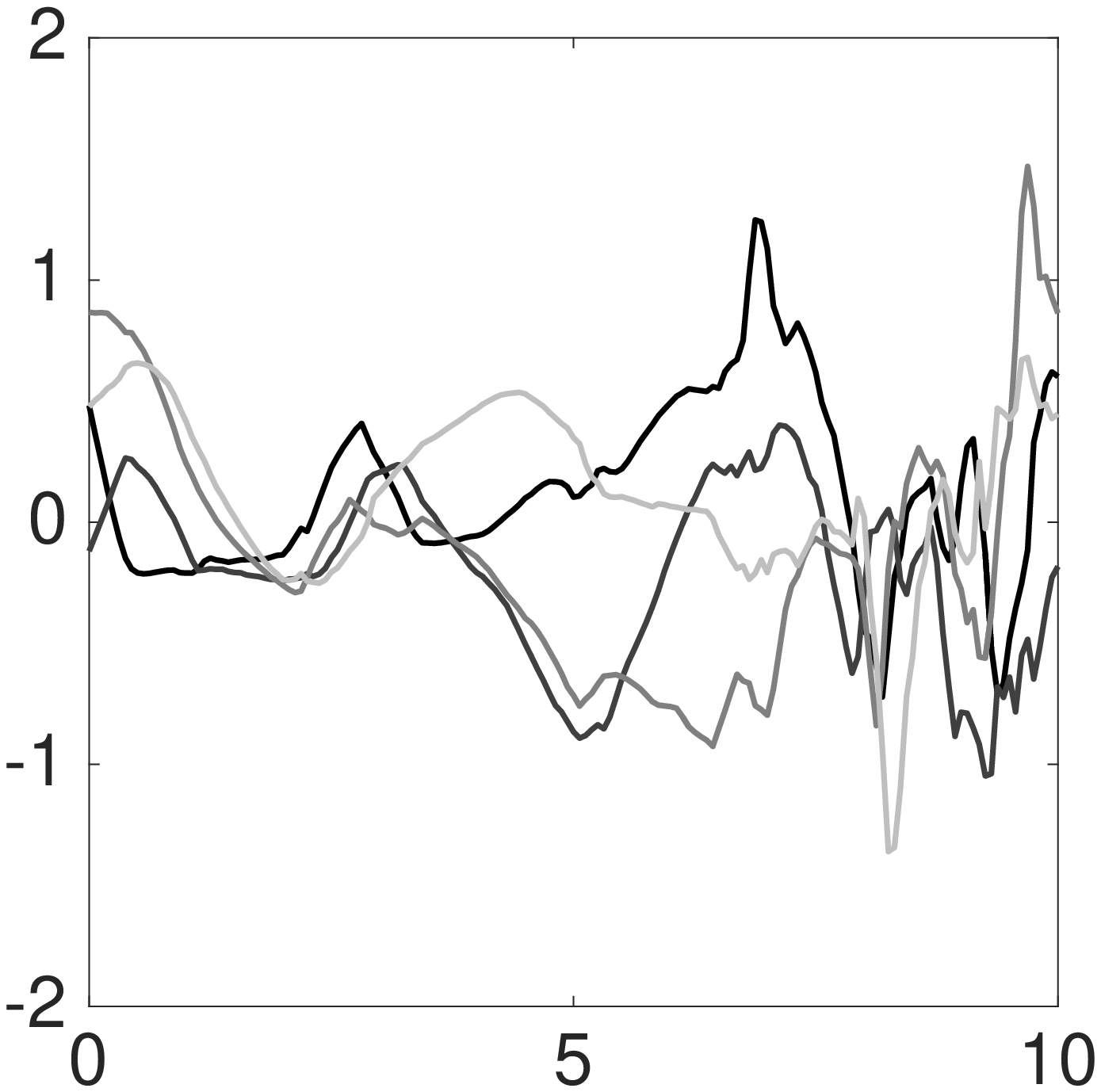}} \\
      \subcaptionbox{Length-scale realisation $\ell^N_\omega$ given Gaussian process hyperparameter realisation $u^N_\omega$.}
              {\includegraphics[width=0.32\textwidth]{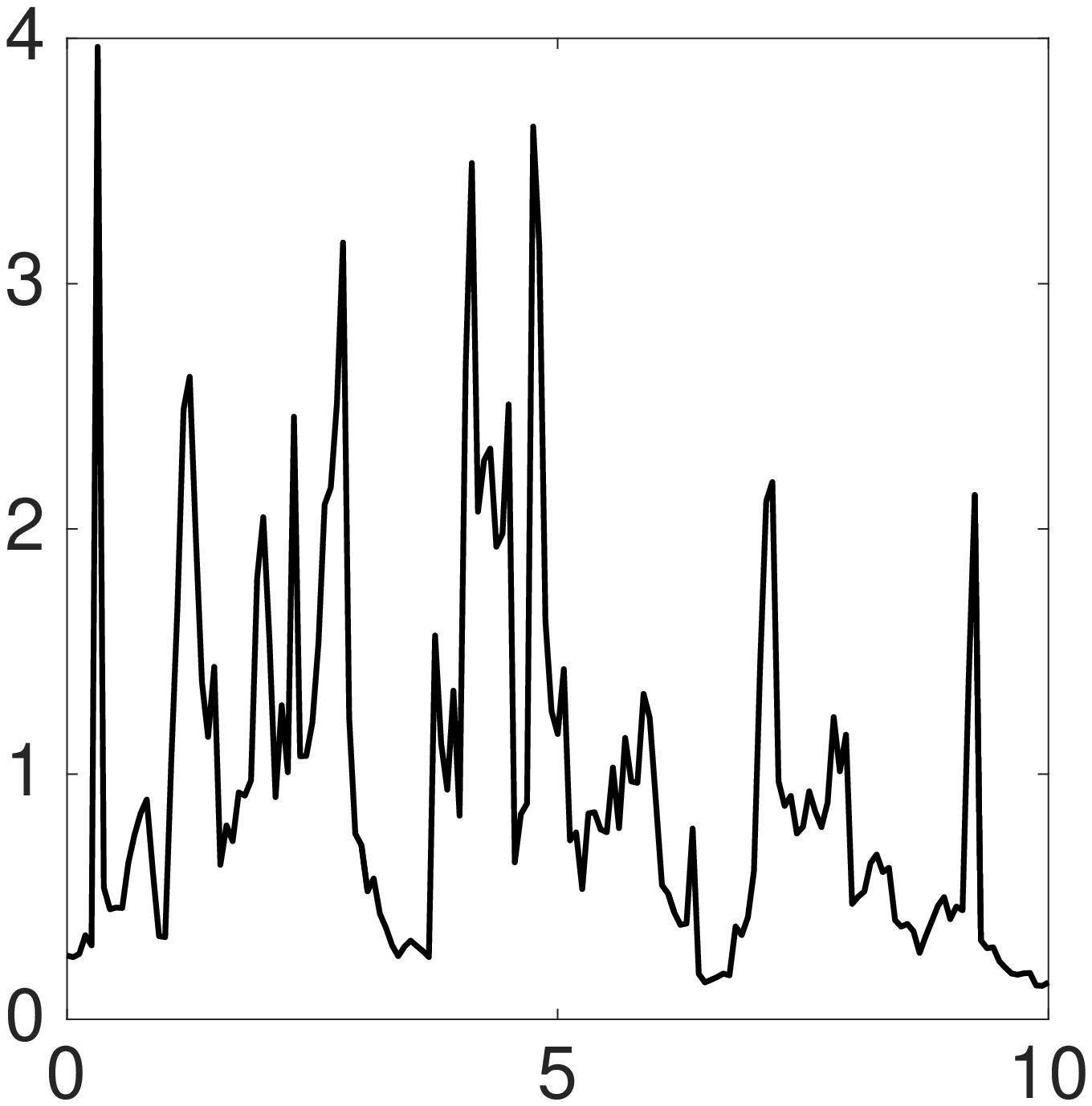}}
      \subcaptionbox{Realisations $v^N_\omega$ given Gaussian process hyperparameter $u^N_\omega$.}
        {\includegraphics[width=0.32\textwidth]{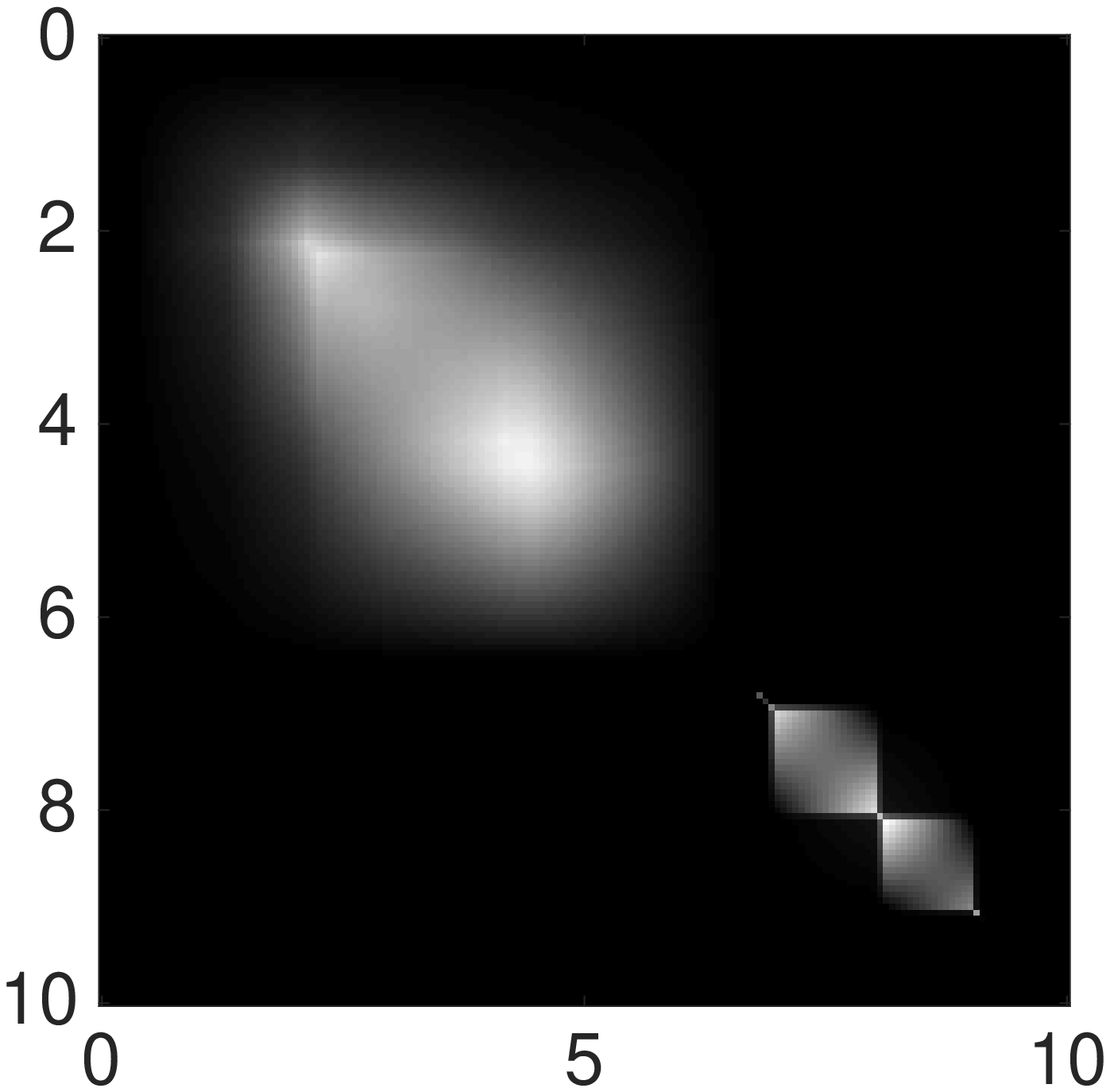}} 
      \subcaptionbox{Realisations $v^N_\omega$ given Gaussian process hyperparameter $u^N_\omega$.}
        {\includegraphics[width=0.32\textwidth]{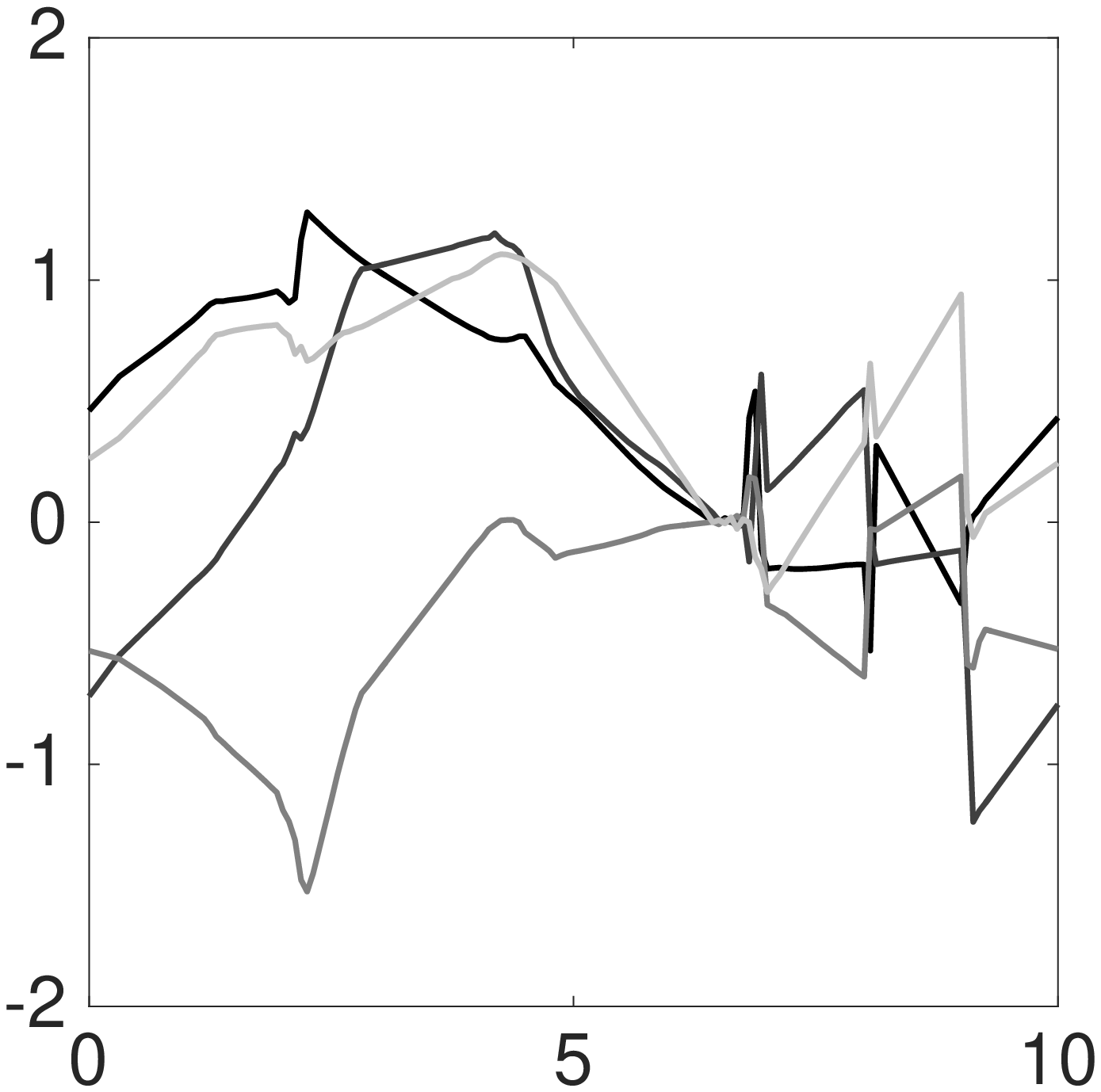}} 
        
  \caption{Examples of constructing non-stationary Mat\'ern realisations with hypermodels. Top panel -- from left to right: Realisation $\ell^N_\omega$ given Cauchy walk as $u^N_\omega$, resulting covariance matrix, and four realisations. Bottom panel: Same as above, but with a Gaussian process hyperprior.} \label{fig:foto1}
\end{figure}

In Figure \ref{fig:foto2}, we have realisations of the constructed two-dimensional hypermodels.
As hyperprior realisations, we have a constant-parameter Mat\'ern field realisation, and, an inhomogeneous Mat\'ern field realisations obtained by using different values of length-scaling and tilt-angle in different parts of the domain. 
Here we have used an extended domain when constructing the realisation, and in order to remove the boundary effects, we have  cropped the images suitably.
By varying the Mat\'ern field models, we have three different length-scaling fields.
These fields are then used as input for $g(s)$, and in the bottom row, we have plotted realisations of the prior.
In the bottom panel, second image from the right, we have used $\ell(u(x)) = \ell_1(x) = 2\ell_2(x)$ and non-zero constant tilt-angle theta.
In this way, we can make also anisotropic features.

\begin{figure}[htp]
      \centering
\begin{tikzpicture}
\node[inner sep=0pt] (foto1) at (4.5/2,0)
    {\includegraphics[width=.22\textwidth]{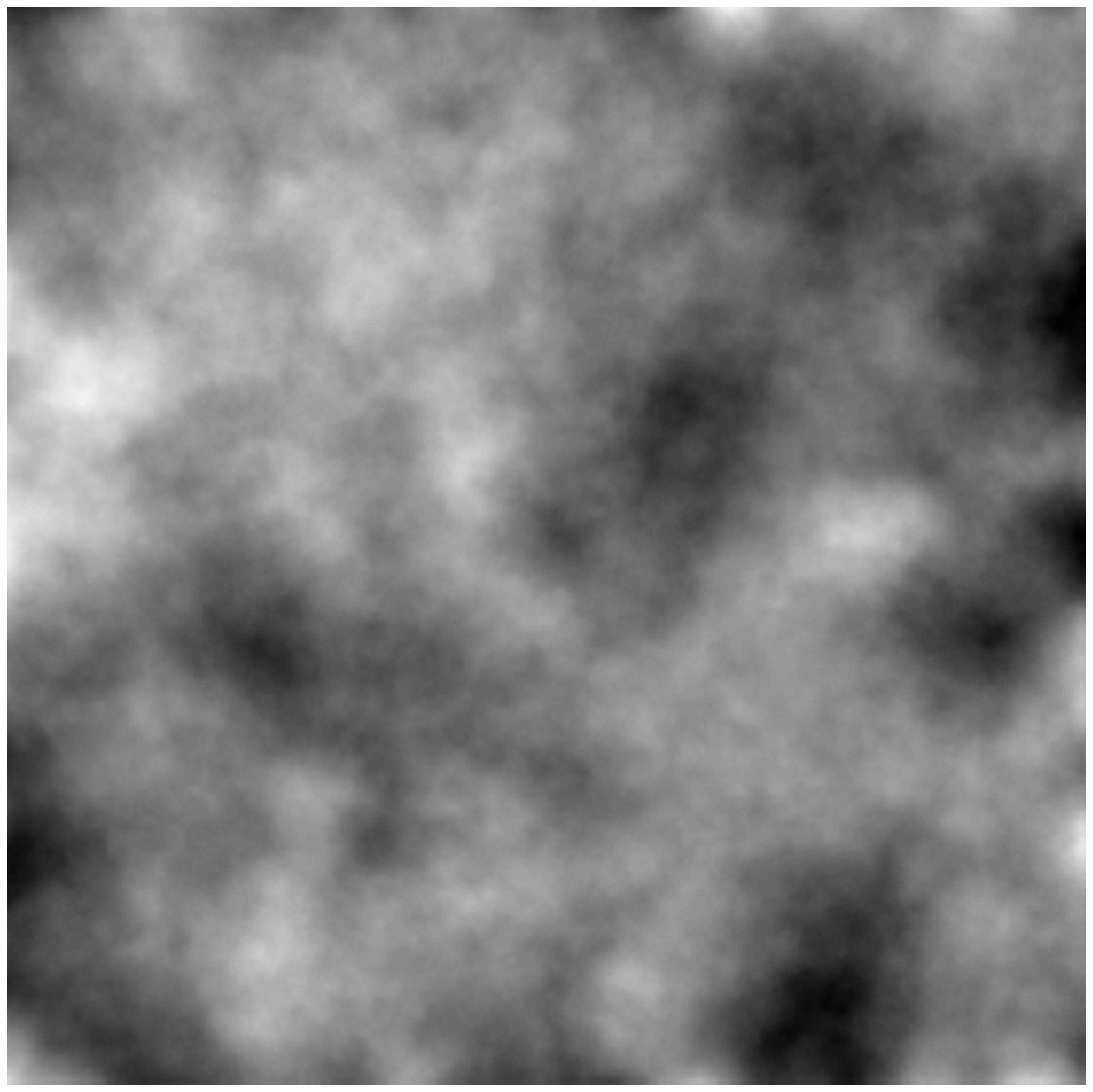}};

\node[inner sep=0pt] (foto2) at (0,-4)
    {\includegraphics[width=.22\textwidth]{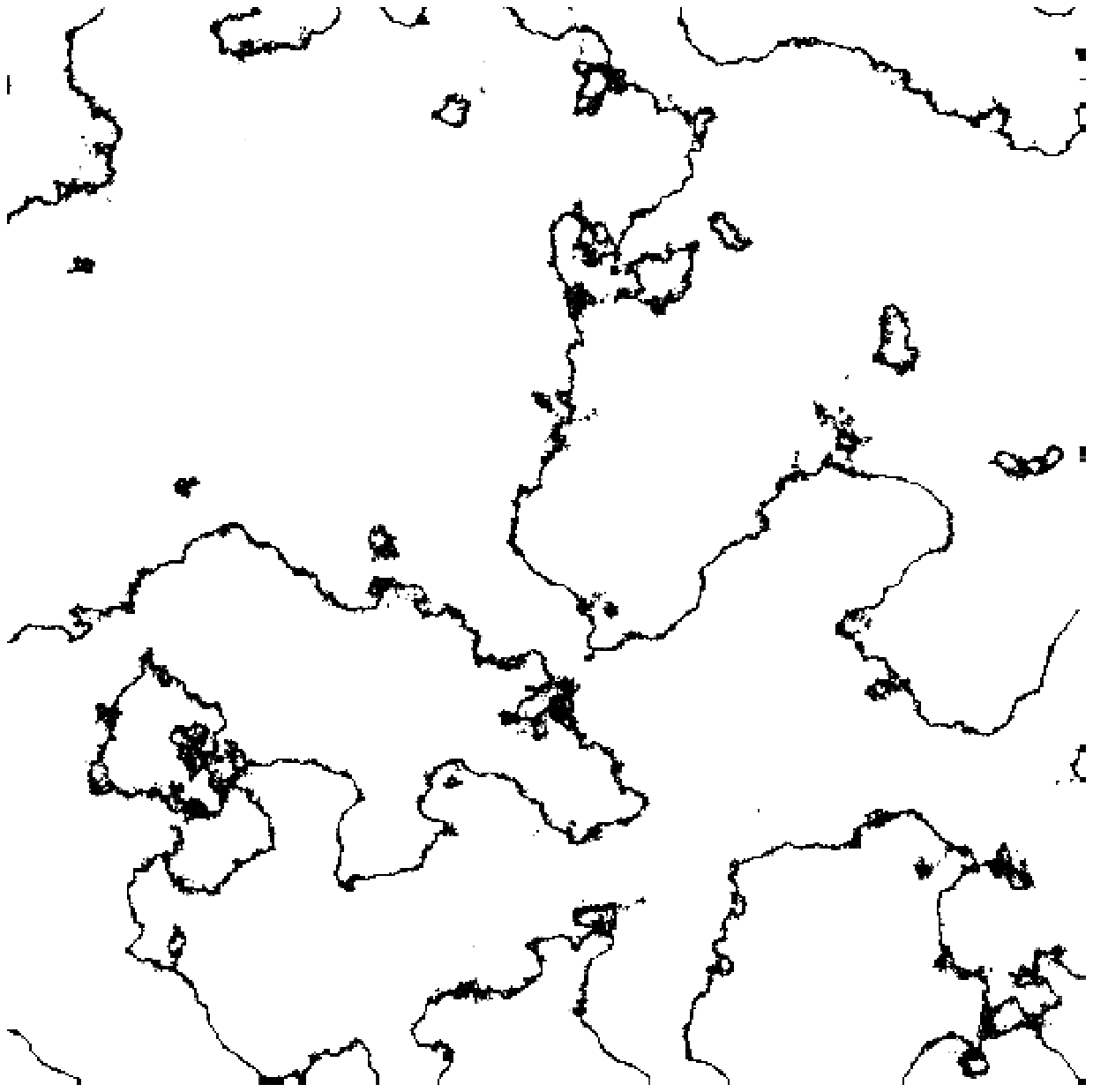}};

\node[inner sep=0pt] (foto2c) at (4.5,-4)
    {\includegraphics[width=.22\textwidth]{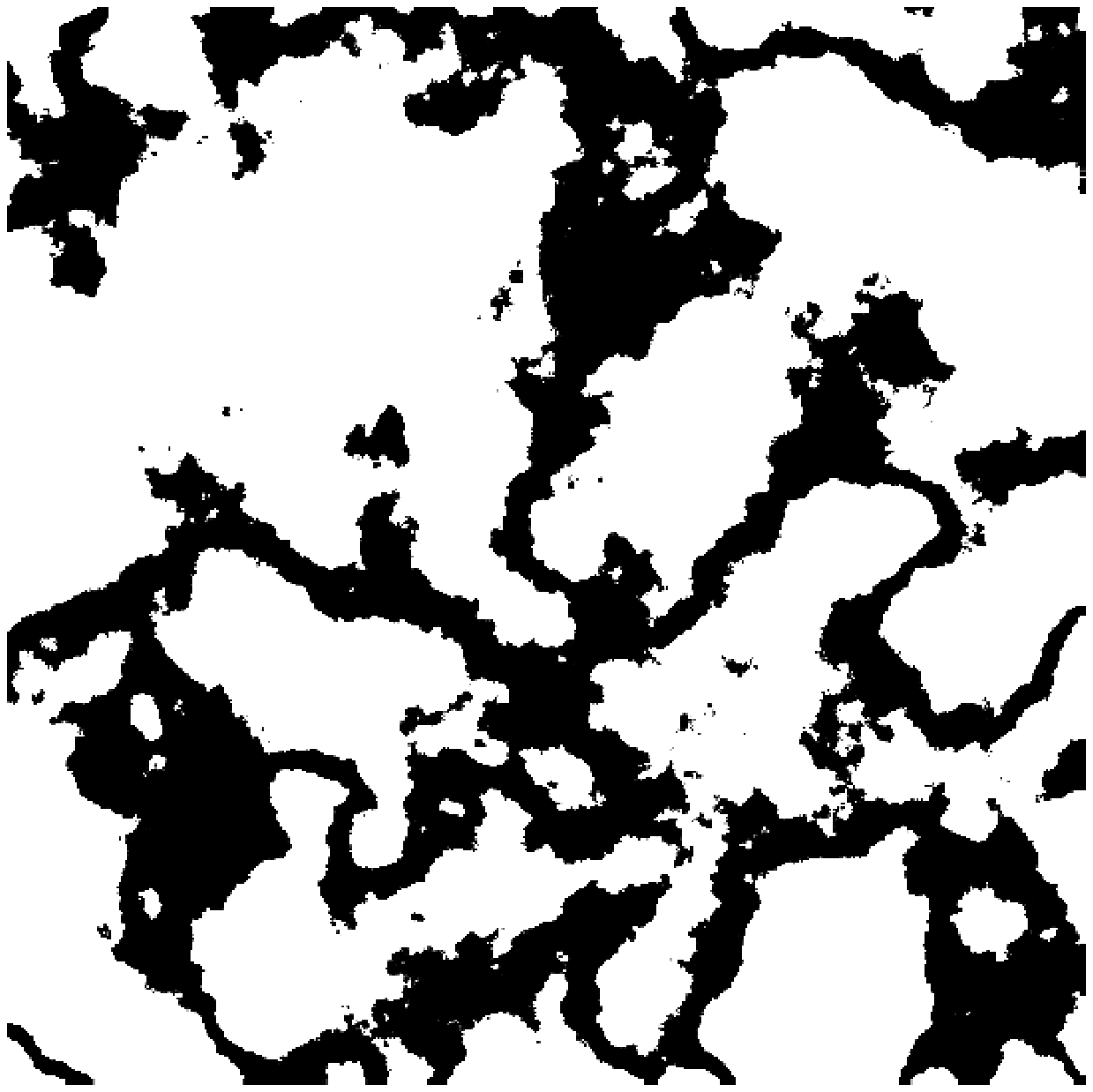}};

\node[inner sep=0pt] (foto3) at (0,-8)
    {\includegraphics[width=.22\textwidth]{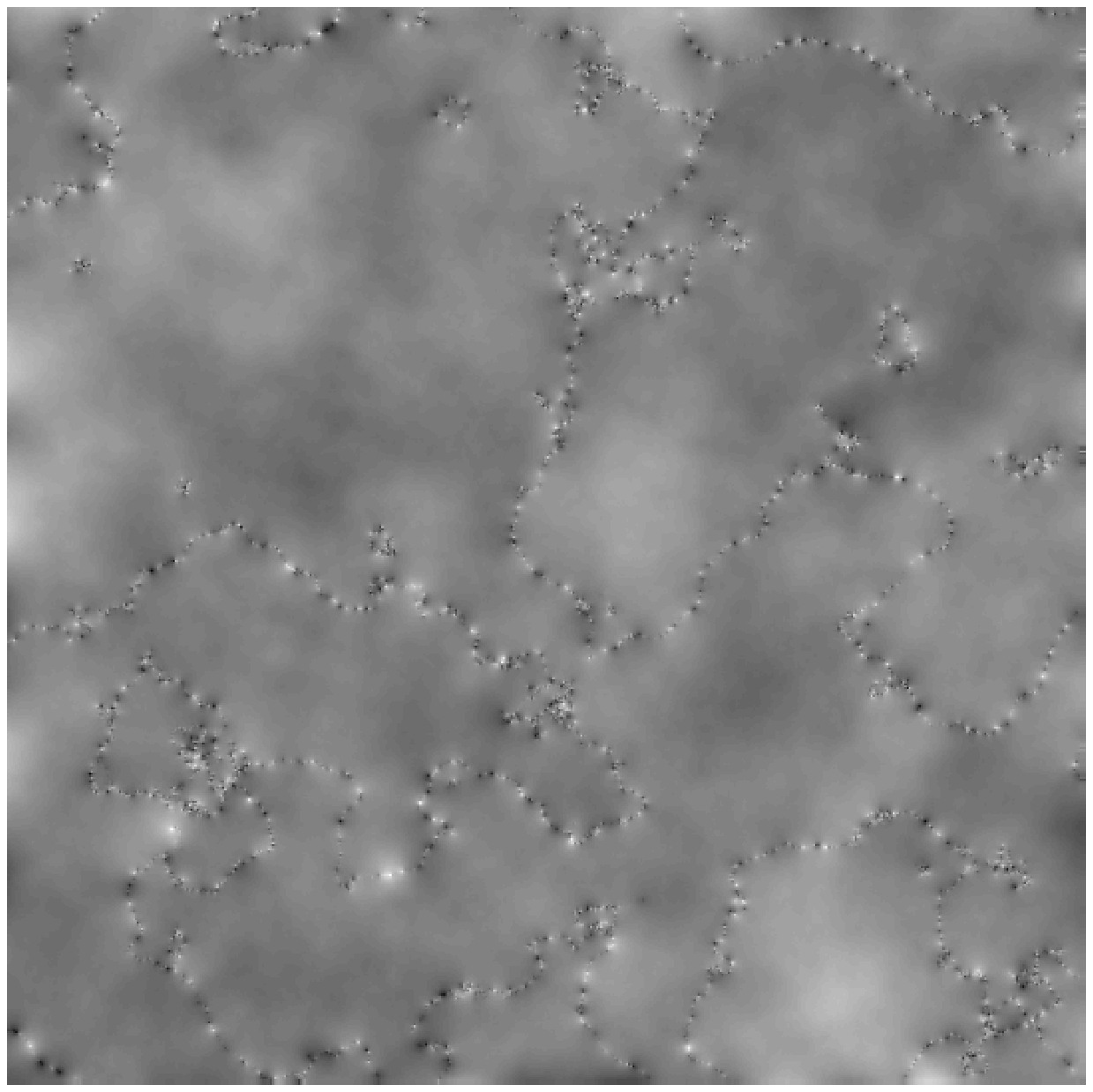}};

\node[inner sep=0pt] (foto3c) at (3,-8)
    {\includegraphics[width=.22\textwidth]{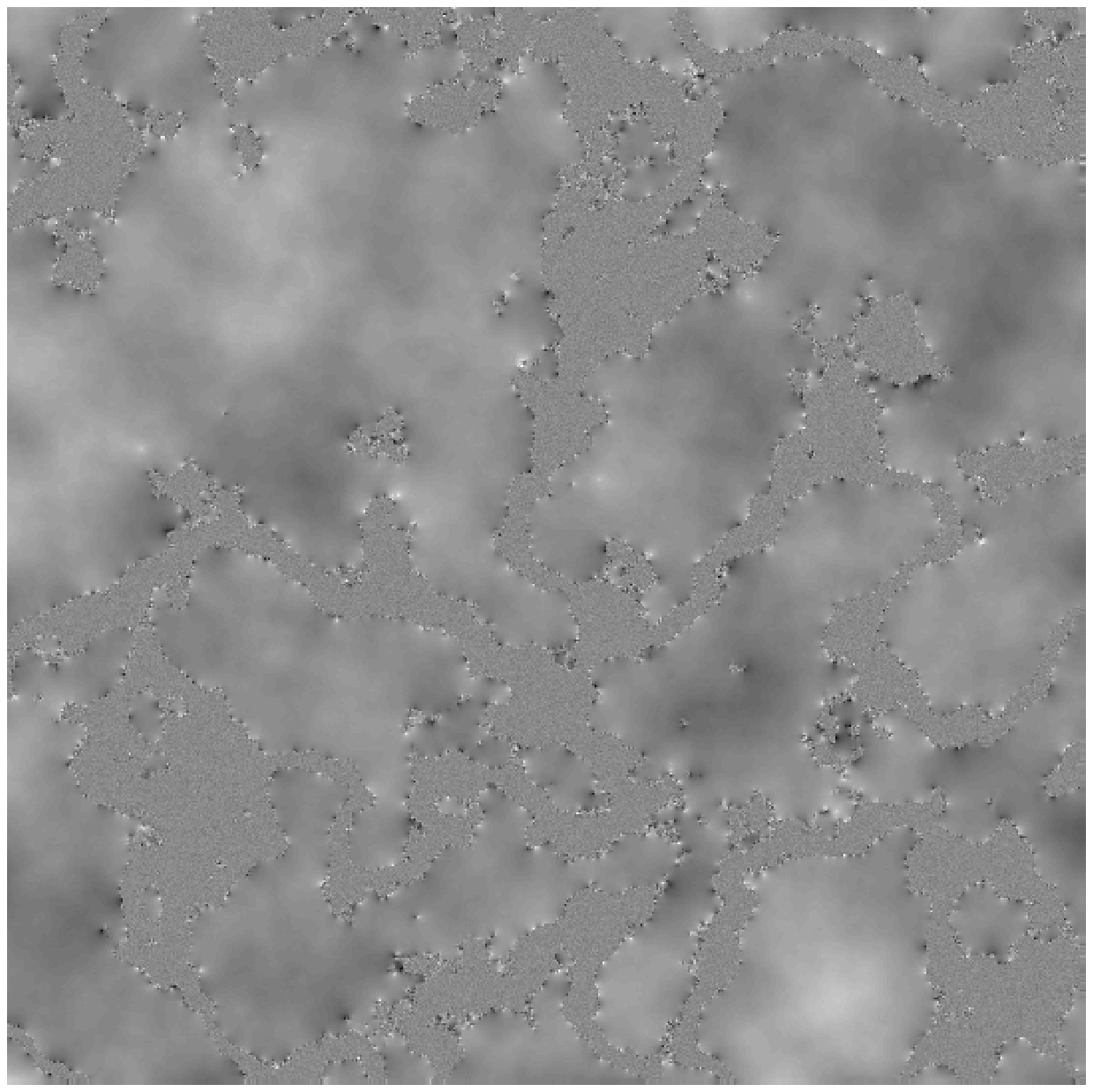}};

\node[inner sep=0pt] (foto3d) at (6,-8)
    {\includegraphics[width=.22\textwidth]{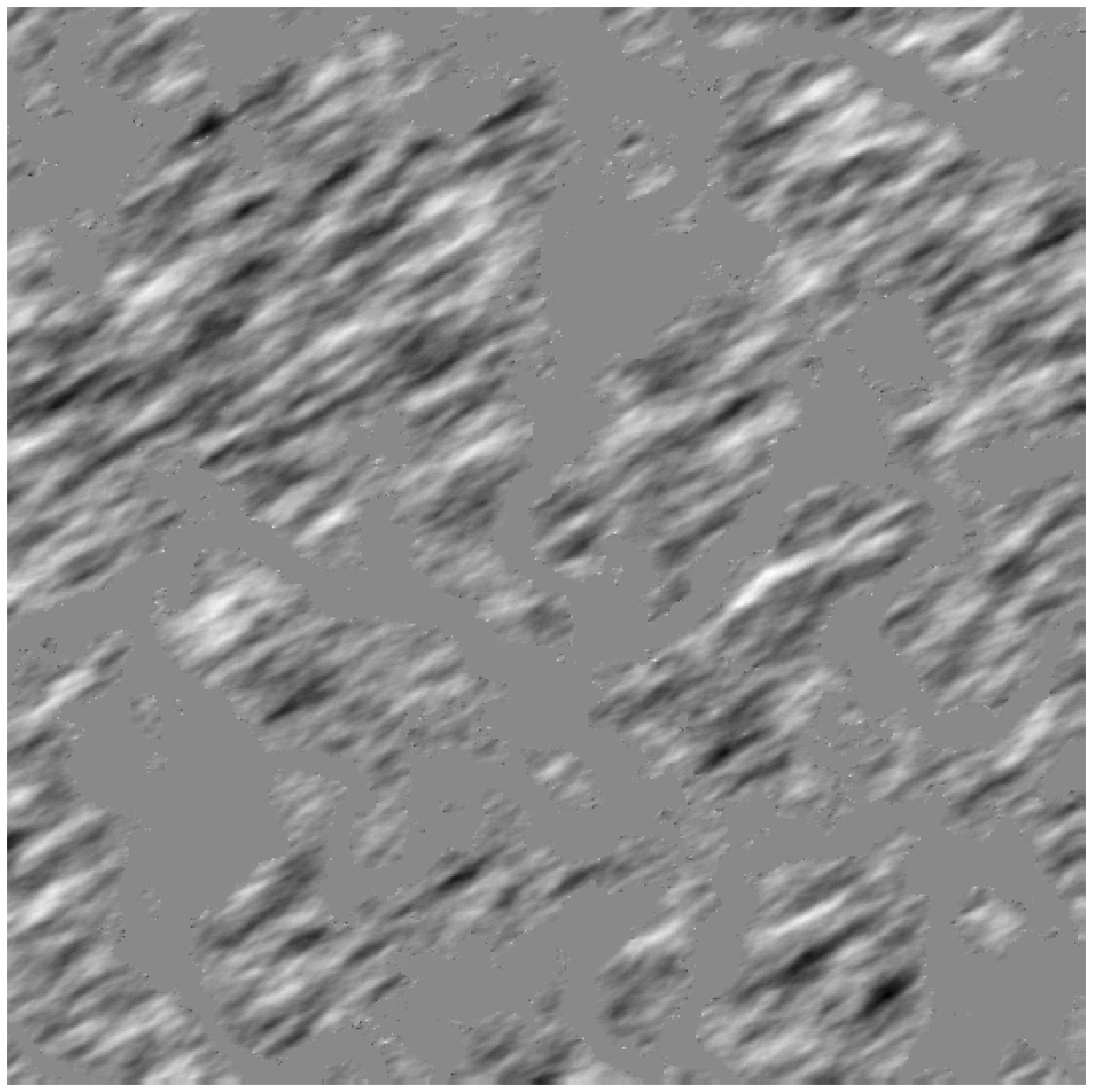}};

\node[inner sep=0pt] (foto1f) at (9,0)
    {\includegraphics[width=.22\textwidth]{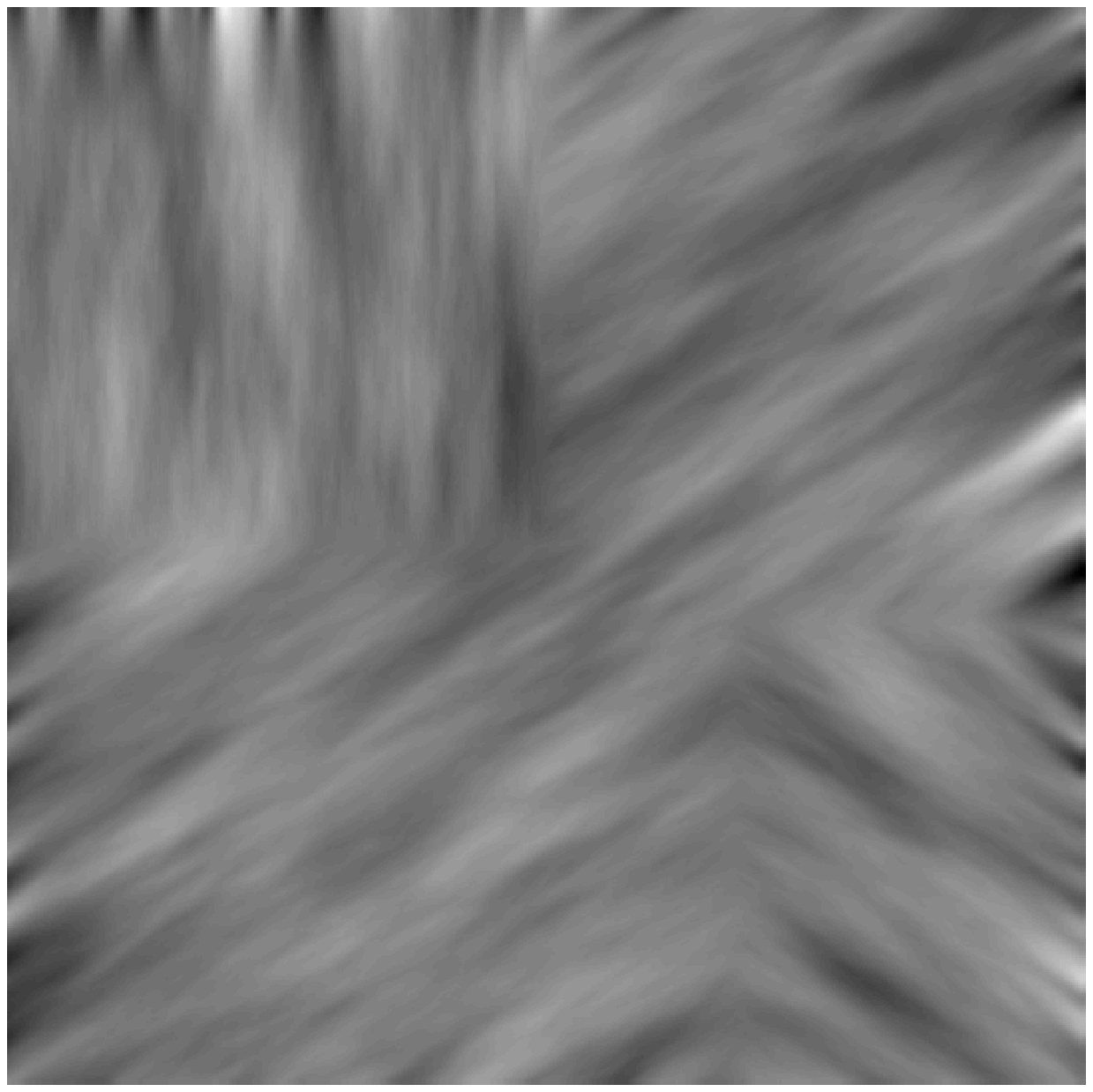}};

\node[inner sep=0pt] (foto2f) at (9,-4)
    {\includegraphics[width=.22\textwidth]{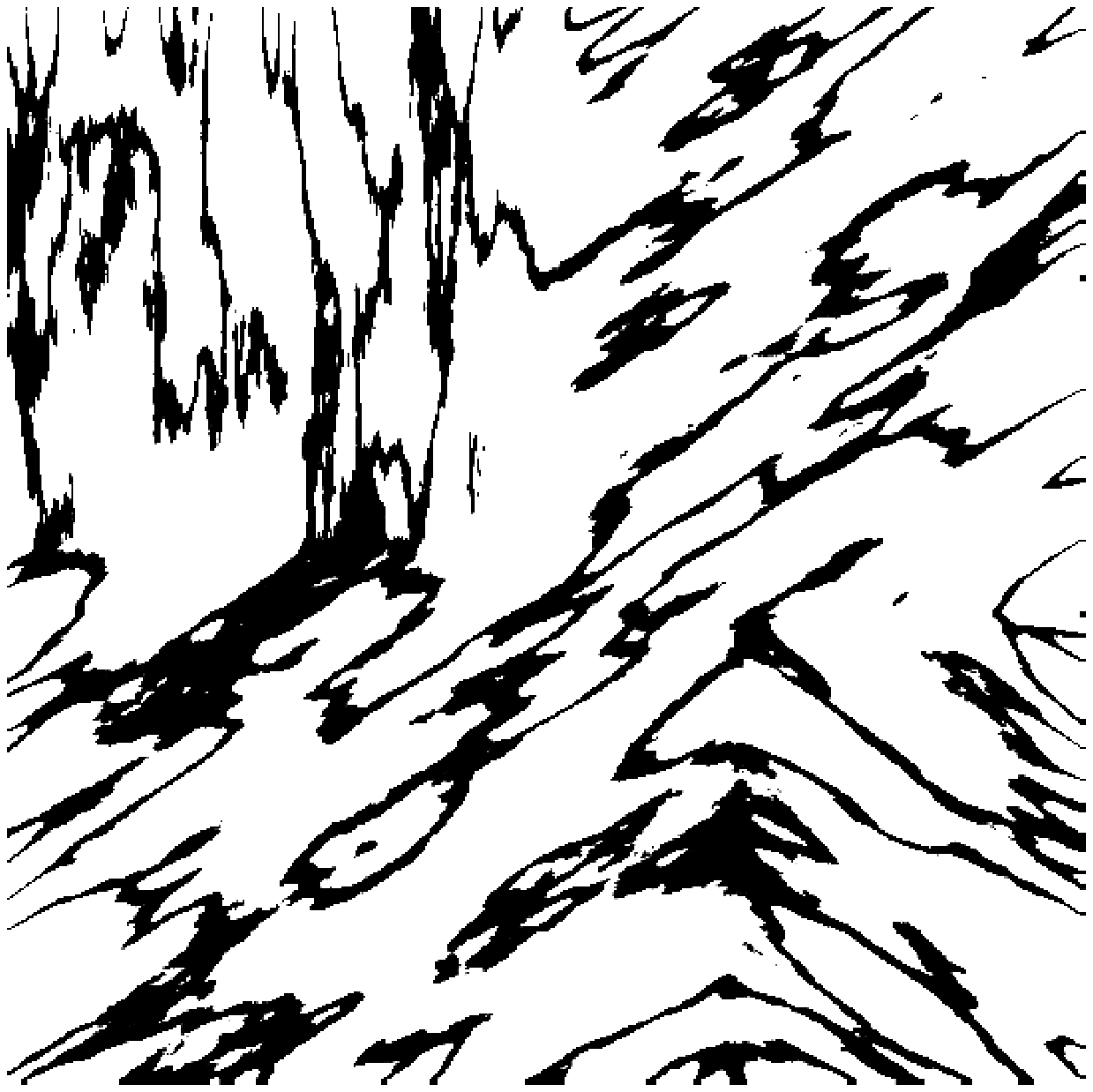}};

\node[inner sep=0pt] (foto3f) at (9,-8)
    {\includegraphics[width=.22\textwidth]{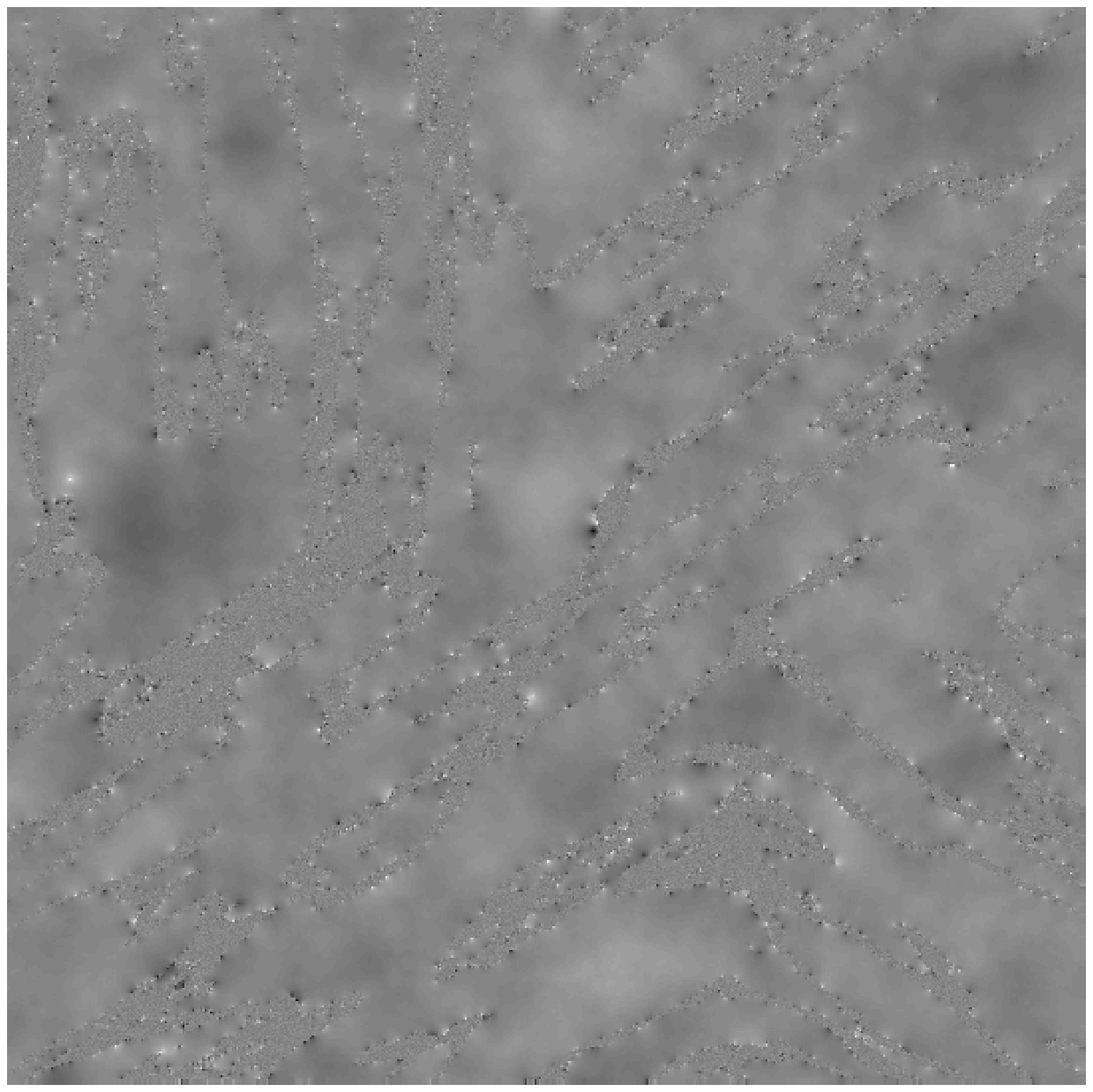}};

\draw[->,thick] (foto1.south) -- (foto2.north)
    node[midway,fill=white] {$\ell(x)$};

\draw[->,thick] (foto1.south) -- (foto2c.north)
    node[midway,fill=white] {$\ell(x)$};

\draw[->,thick] (foto1f.south) -- (foto2f.north)
    node[midway,fill=white] {$\ell(x)$};
    
\draw[->,thick] (foto2f.south) -- (foto3f.north);

\draw[->,thick] (foto2.south) -- (foto3.north);

\draw[->,thick] (foto2c.south) -- (foto3c.north);

\draw[->,thick] (foto2c.south) -- (foto3d.north);

\end{tikzpicture}
  \caption{Non-stationary structures obtained by starting from a constant-parameter or inhomogeneous Mat\'ern field realisation (upper  panel), after which have been mapped to correlation length-scaling fields (middle). In the bottom panel, we have corresponding realisations with isotropic and anisotropic structures. This kind of structure can detect regions within which the behaviour of the random field is smooth, but the regions are distinct.} \label{fig:foto2}
 \end{figure}

\section{MCMC}
\label{section:MCMC}

In order to draw estimates from the posterior distribution, we will use a combination of  Gibbs sampling and Metropolis-within-Gibbs algorithms.
We summarise the algorithm as follows:
\begin{enumerate}
\item Initiate $v^{N,{(0)}}$ and $\ell^{N,(0)}$. 
\item For $k = 1\dots K$
\begin{enumerate}
	\item Update $v^{N,(k)}$ given fixed $\ell^{N,(k-1)}$ and draw $\eta\sim \mathcal{N}(0,I)$, and set 
	\begin{equation*}
		v^{N,(k)}  = \begin{pmatrix} \sigma^{-1} A \\ L(\ell^{N,(k-1}))\end{pmatrix}^\dag\left(\begin{pmatrix}\sigma^{-1}y \\ 0 \end{pmatrix} + \eta \right),
	\end{equation*}
	where $^\dag$ denotes the matrix pseudoinverse.
	\item Update $\ell^{N,(k)}$ using Metropolis-within-Gibbs  given fixed $v^{N,{(k)}}$. \\ For $n = 1\dots N$
	\begin{enumerate}
		\item Draw a candidate sample $\tilde \ell^{N}_n$ from a proposal distribution $Q(\cdot | \ell^{N,(k-1)}_n)$ 
		\item Denote $\ell^{N,(k)}_{j\neq n}:=(\ell^{N,(k)}_1,\dots, \ell^{N,(k)}_{n-1},\ell^{N,(k-1)}_{n+1},\dots, \ell^{N,(k-1)}_{N})^T$, and 
			 accept with probability
			\begin{equation*}
				p_n = \min \left(1, \frac{D\left(\tilde \ell^{N}_n \vert v^{N,{(k)}}, \ell^{N,(k)}_{j\neq n}\right)Q\left(\ell_n^{N,(k-1)} \vert \tilde \ell^{N}_n\right)}{D\left(\ell^{N,(k-1)}_n \vert v^{N,{(k)}}, \ell^{N,(k)}_{j\neq n}\right)Q\left(  \tilde \ell^{N}_n  \vert  \ell_n^{N,(k-1)}\right)}\right)
			\end{equation*}
		\item If accepted, we set $\ell^{N,(k)}_n=\tilde \ell^{N}_n$. Otherwise we set $\ell^{N,(k)}_n=\ell^{N,(k-1)}_n$.
		\item Set $n \leftarrow n + 1$, and repeat from step (i) until $n=N$
	\end{enumerate}
	\item Set $k \leftarrow k + 1$, and repeat from step (a) until the desired sample size $K$ is reached.
\end{enumerate}
\end{enumerate}
Metropolis-within-Gibbs is explained for example in \cite{Dunlop2016,Markkanen2016}. 
The latter uses the term single-component Metropolis-Hastings to emphasise the fact that we sample every single component separately with the Metropolis-Hasting algorithm.
We aim at acceptance ratio between 25-50 per cent, which is obtained by tuning the random walk proposal process.

In computing acceptance probability, it is a common practice, due to numerical reasons, to take logarithms instead of using ratios. 
For example, in the case of the Gaussian hyperprior,  the logarithm of the posterior is
\begin{equation} \label{eqn:alpha_n}
\begin{split}
\log\left(D(v^N,\ell^N \vert y)\right)
=&  R- \frac{1}{2}(u^N)^T (\tilde C^N)^{-1}u^N + \log(\vert L(\ell^N)\vert))-\dots \\ & \frac{1}{2}(v^N)^TL(\ell^N)^TL(\ell^N)v^N - \frac{1}{2}(y-Av^N)^T\Sigma^{-1}(y-Av^N), 
\end{split}
\end{equation}
where $R$ is some constant, which we may omit from the analysis.
We note that the normalisation constant computation, i.e.\ logarithmic determinant $\log(\vert L(\ell^N)\vert))$, is a numerically unstable and computationally expensive operation, especially in higher dimensions.
We need to compute altogether $N\times K$ logarithmic determinants in our estimation algorithm, hence we may wish to minimise the log-determinant computation time.

We note that in the Metropolis-Hastings part, when updating $\ell^{N,k}_n$, we are actually computing ratio of the proposed and old normalisation constant.
Let us denote the proposed and old covariances  by $C_{\mathrm{old}}$ and  $C_{\mathrm{prop}}$, respectively.
Then we should  actually calculate the ratio, as originally in Equation \eqref{eqn:alpha_n}, and  not take logarithms.
Then by simple algebra we have
\begin{equation*}
\frac{\sqrt{\vert C_{\mathrm{old}}\vert }}{\sqrt{ \vert C_{\mathrm{prop}} \vert }} = \frac{\sqrt{\vert (L_{\mathrm{old}}^TL_{\mathrm{old}})^{-1}\vert}}{\sqrt{\vert (L_{\mathrm{prop}}^TL_{\mathrm{prop}})^{-1}\vert }} = \frac{\vert L_{\mathrm{prop}}\vert }{\vert L_{\mathrm{old}}\vert } = \vert L_{\mathrm{prop}}L_{\mathrm{old}}^{-1}\vert .
\end{equation*}
Now, we note that as we update only one row at the time, the matrix-inverse-matrix-product is of the form:
\begin{equation*}
L_{\mathrm{prop}}L_{\mathrm{old}}^{-1} = \begin{pmatrix} I & 0 \\ \times & \times \\ 0 & I   \end{pmatrix}.
\end{equation*}
Hence, the product is diagonal, except for the $n^{(\mathrm{th})}$ updated row.
This means that we simply need to compute diagonal alue of the updated row, i.e.\ only one value.
It would seem that we would need to invert whole matrix $L_{\mathrm{old}}^{-1}$. 
However, we note that as $L$-matrices are sparse, so we will have only a limited amount of non-zero values. Consider e.g. the one-dimensional case. 
We could have e.g.
\begin{equation*}
\begin{pmatrix} 0 & \dots & 0 & a & b & a & 0 & \dots & 0\end{pmatrix} L_{\mathrm{old}}^{-1} = \begin{pmatrix} 0 & \dots & 0 & \times & \times & \times & 0 & \dots & 0\end{pmatrix},
\end{equation*}
where $a$  and $b$ are constants derived from the  approximations in Equation \eqref{eqn:1Dappr}.
By simple matrix operations and removing the 'zeroes' from the matrix equation, we  can rewrite this as 
\begin{equation*}
\begin{pmatrix} a & b & a \end{pmatrix}  = \begin{pmatrix}  \times & \times & \times \end{pmatrix}\tilde L_{\mathrm{old}},
\end{equation*}
where $\tilde L_{\mathrm{old}}$ is a $3\times 3$ matrix.
Hence, the computation of the determinant is then simply inverting a $3\times 3$ matrix and making one matrix-vector multiplication. In two-dimensional problems, we need to invert $5\times 5$ matrix.

\section{Numerical examples}
\label{sec:numerics}

Now, we shall apply the developed methodology to one-dimensional interpolation and numerical differentiation, and, to two-dimensional interpolation.

\subsection{One-dimensional interpolation}

We model  discrete noise-perturbed observations of a continuous object $v$ as
\begin{equation*}
y(jh) = v(j'h') + e(jh),
\end{equation*}
where $e(jh)$ is zero-mean white noise with known variance, and $j\in \mathbb{J}\subset\mathbb{Z}$ is the measurement mesh and $j'\in  \mathbb{J}'\subset\mathbb{Z}$ is the mesh of the discretised unknown $v^N$.
The discretisation steps $h,h'>0$.
This model, can be rewritten in the form given in Equation \eqref{eqn:discreteobservation}, i.e.\ as $y=Av^N+e$.
Hence we can write the whole posterior distribution with the help of hypermodels as discussed earlier.

Let us consider $v$ consisting of a $C^\infty$ mollifier function and  two boxcar functions 
\begin{equation}\label{eqn:interpolationunknown}
v(x) = 
\begin{cases}
\exp\left(4-\frac{25}{x(5-x)}\right), &  x\in(0,5) \\
1, & x\in[7,8] \\
-1, & (8,9] \\
0, & \text{otherwise}.
 \end{cases}
\end{equation}
This function has  smooth parts,  edges, and it is also piecewise constant for $x\in [5,10]$.
In Figure \ref{fig:CM_est_1}, we have simulation results with three different prior and hypermodels:
\begin{enumerate}
	\item Constant-parameter Mat\'ern prior, i.e.\ this model does not have hyperprior.
	\item Hypermodel with Cauchy walk $u$.
	\item Hypermodel with stationary Gaussian zero-mean process $u$ with exponential covariance
\end{enumerate}
The domain for both the measurements $y$ and unknown $v^N$ is $[0,10]$.
The measurement mesh is $j = \{0,1,\dots,80\}$, $h=1/8$ and the unknown mesh $j'=\{0,1,\dots,160\}$, $h'=1/16$.
Zero-mean measurement noise has standard deviation $\sigma=0.1$.
Mat\'ern prior has periodic boundary conditions.

With the constant-parameter Mat\'ern prior, we have plotted estimates with a long length-scaling (D),  length-scaling minimising maximum absolute error (G), and length-scaling minimising root mean square error (J).
These estimates capture the smoothness or edges, but not both at the same time.
With the Cauchy and Gaussian hypermodels, the algorithm finds short  and long length-scaling $\ell^N$ (subfigures (B) and (C)). 
Also, the corresponding $v^N$ estimates in (E) and (F) show that we can reconstruct both smooth and edge-preserving parts.
In subfigures (H) and (I), we have plotted $v^N$ on the measurement mesh, and in subfigures (K) and (L) in the interpolated points, i.e.\ between the measurement grid points.
This shows that the interpolated estimates are behaving as expected.

In order to relate this study to Paciorek's 2003 study  \cite{Paciorek2003}, we note that the (D,G,J) subfigures correspond to Paciorek's so-called single-layer model.
For deep learning, one should build a series of hyperpriors over hyperpriors. 
Here, instead, we  have a two-layer model, and we may note that it captures different properties with very good precision. 
Hence, the question remains whether two layers is actually often enough in deep Gaussian processes, and what is the actual gain using deep layers. 
We will leave this question open, and hope to address that in subsequent studies.

\begin{figure}[htp]
\begin{center}
  \subcaptionbox{81 noisy measurements}{\includegraphics[width=0.32\textwidth]{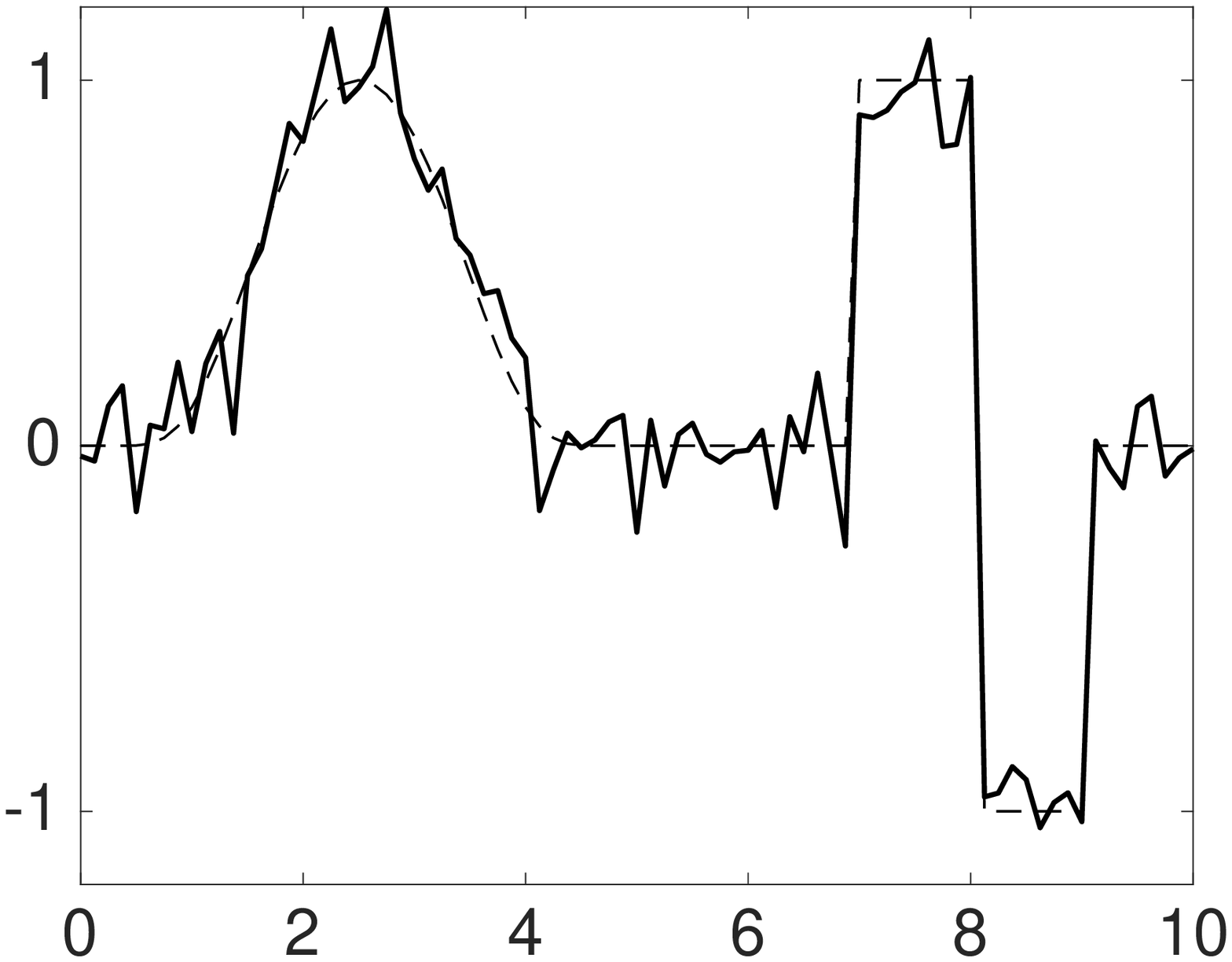}}
  \subcaptionbox{Cauchy hypermodel estimate  $\ell^N$ with $\sigma$ error bars}{\includegraphics[width=0.32\textwidth]{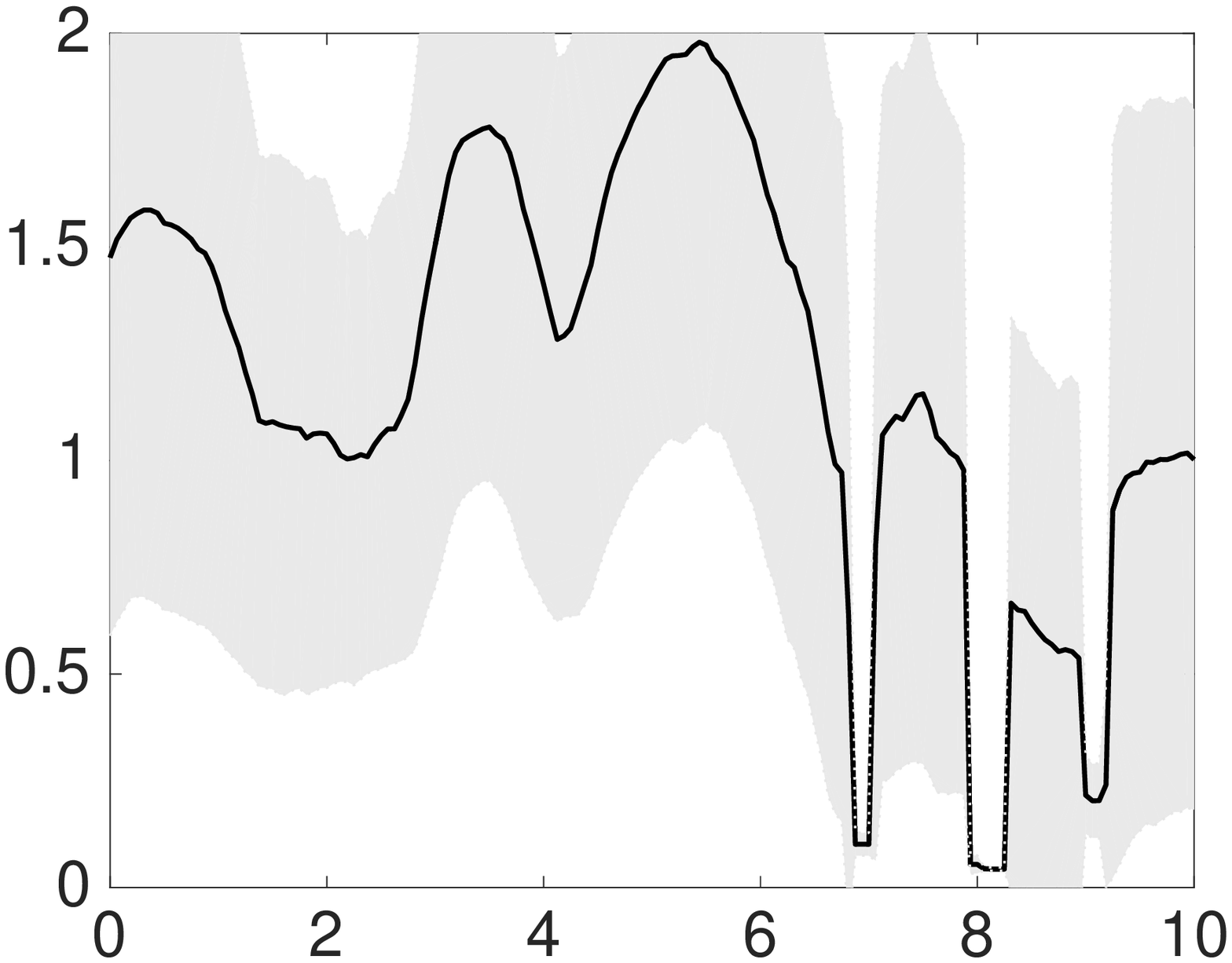}} 
  \subcaptionbox{Gaussian hypermodel estimate  $\ell^N$ with $\sigma$ error bars}{\includegraphics[width=0.32\textwidth]{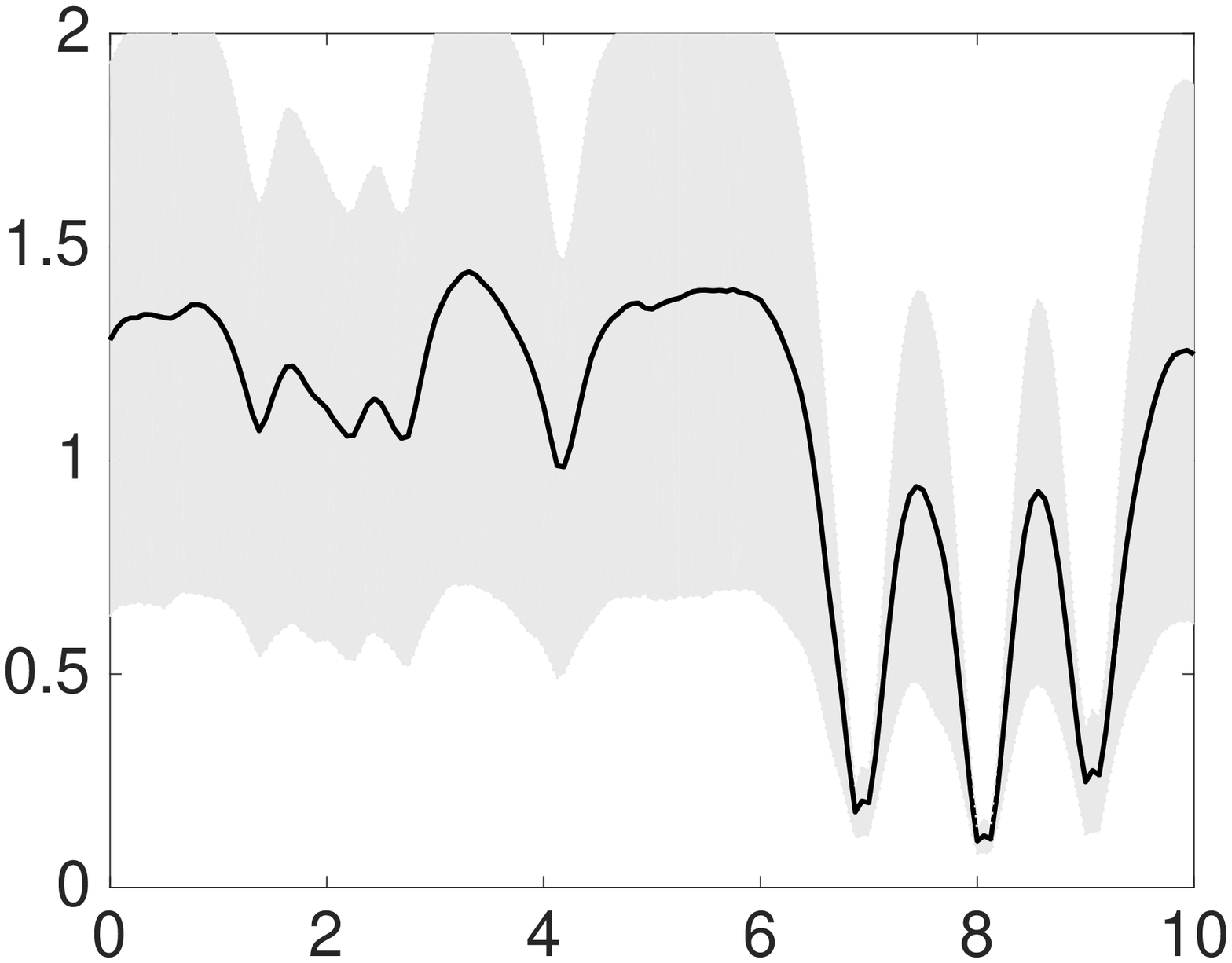}} \\

  \subcaptionbox{Estimate with Gaussian prior  $\ell=2$}{\includegraphics[width=0.32\textwidth]{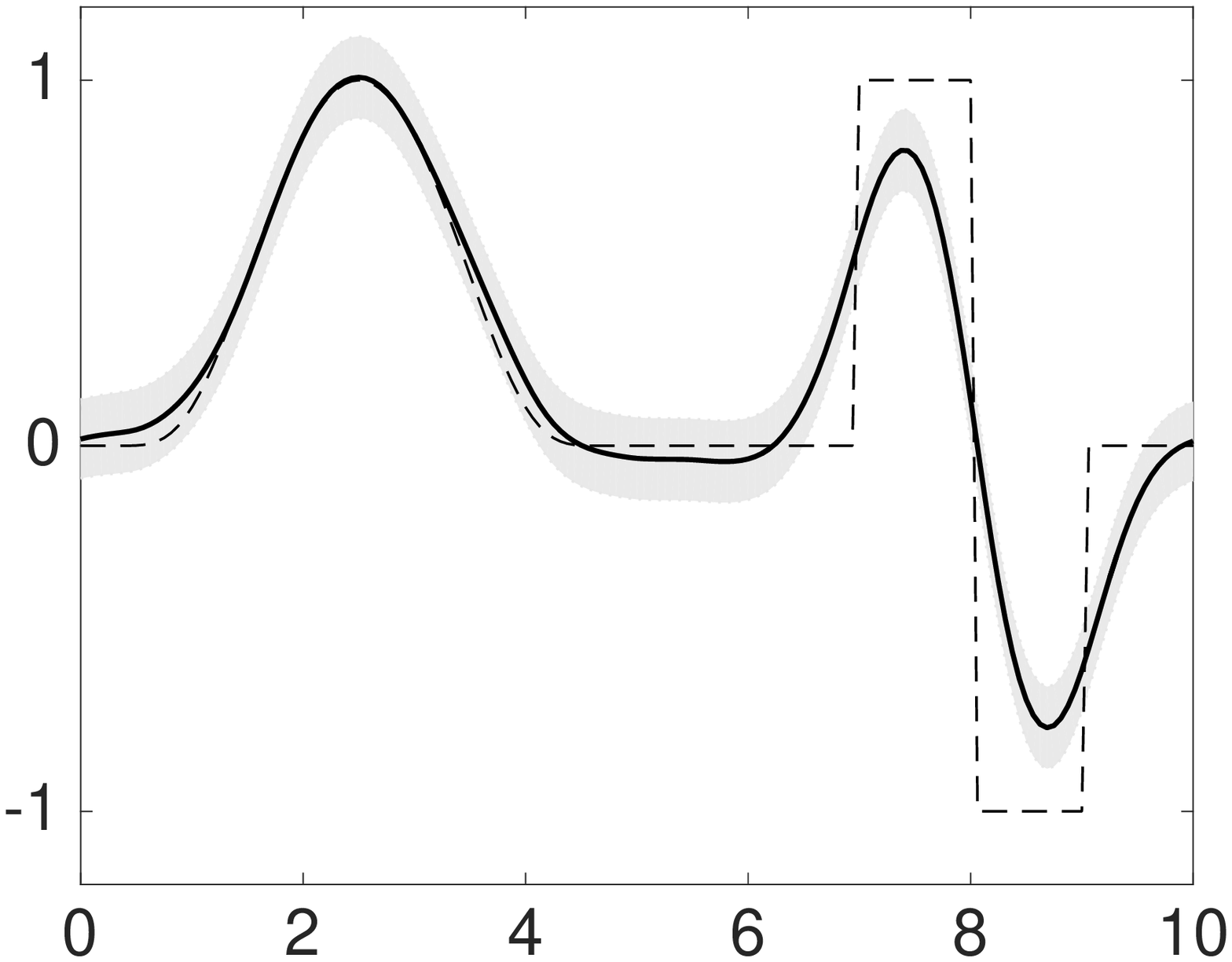}}
  \subcaptionbox{Cauchy hypermodel estimate $v^N$ with $3\sigma$ error bars}{\includegraphics[width=0.32\textwidth]{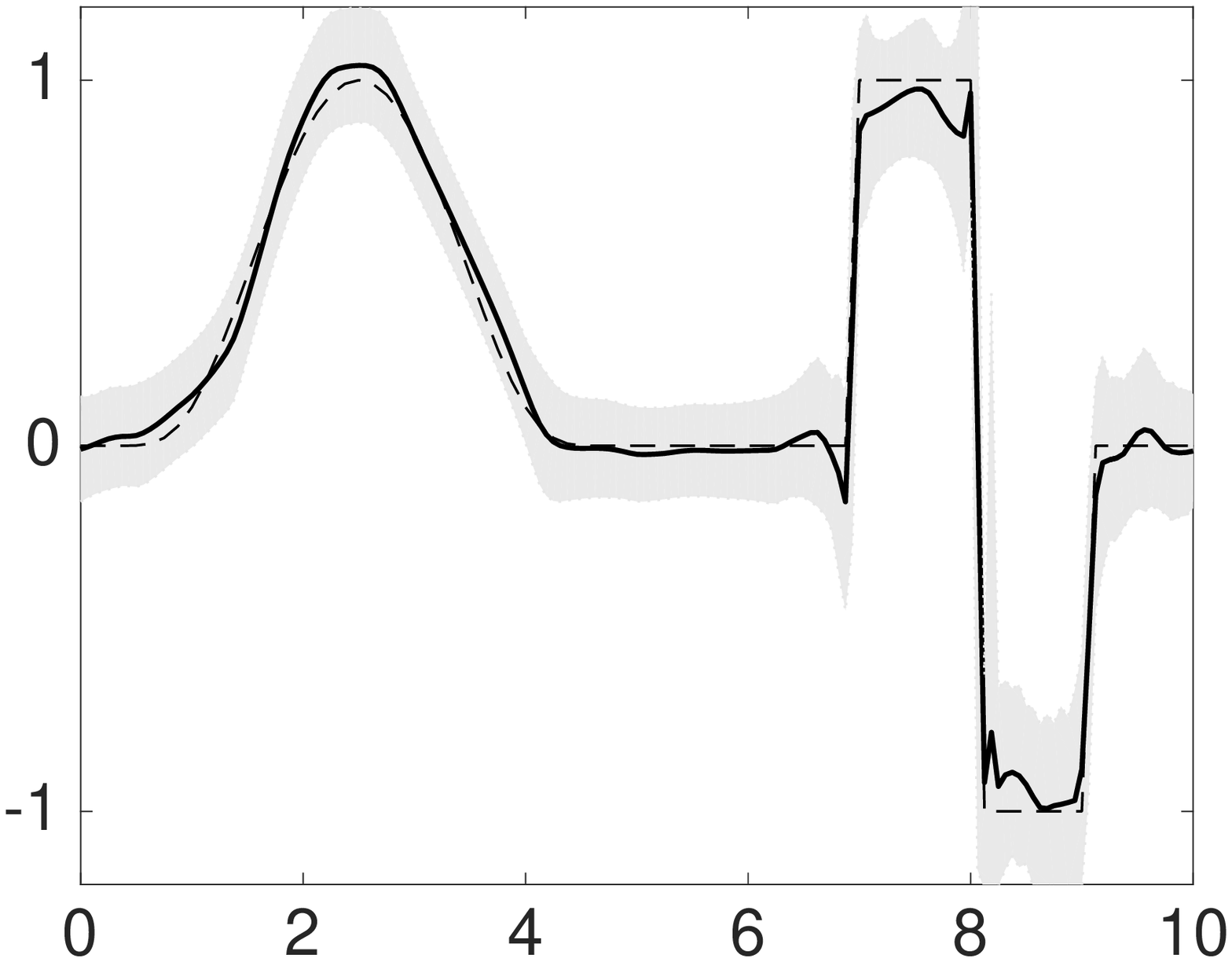}} 
  \subcaptionbox{Gaussian hypermodel estimate $v^N$ with $3\sigma$ error bars}{\includegraphics[width=0.32\textwidth]{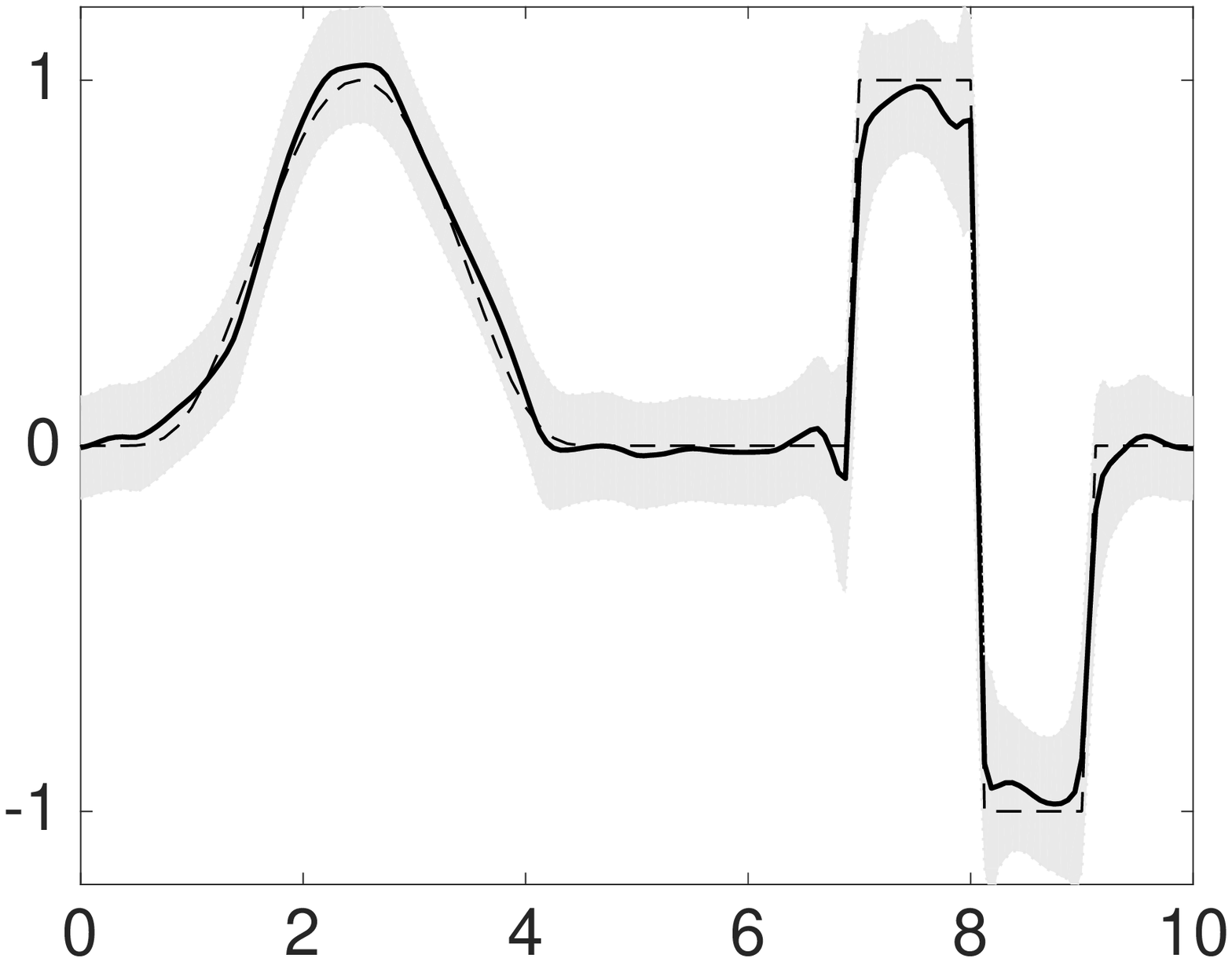}} \\
  
  \subcaptionbox{Minimum MAE estimate with Gaussian prior}{\includegraphics[width=0.32\textwidth]{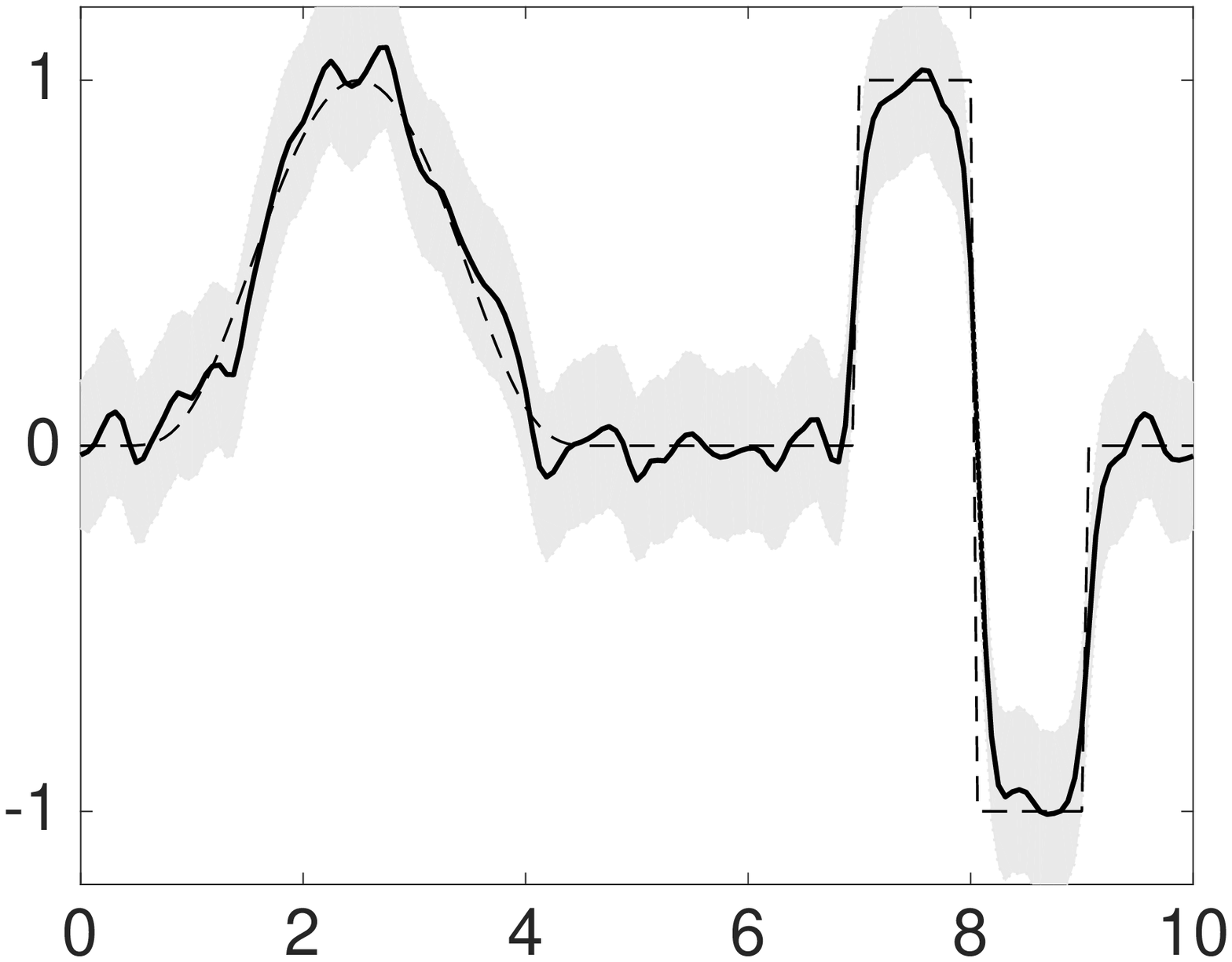}}
  \subcaptionbox{Cauchy hypermodel estimate $v^N$ on measurement grid}{\includegraphics[width=0.32\textwidth]{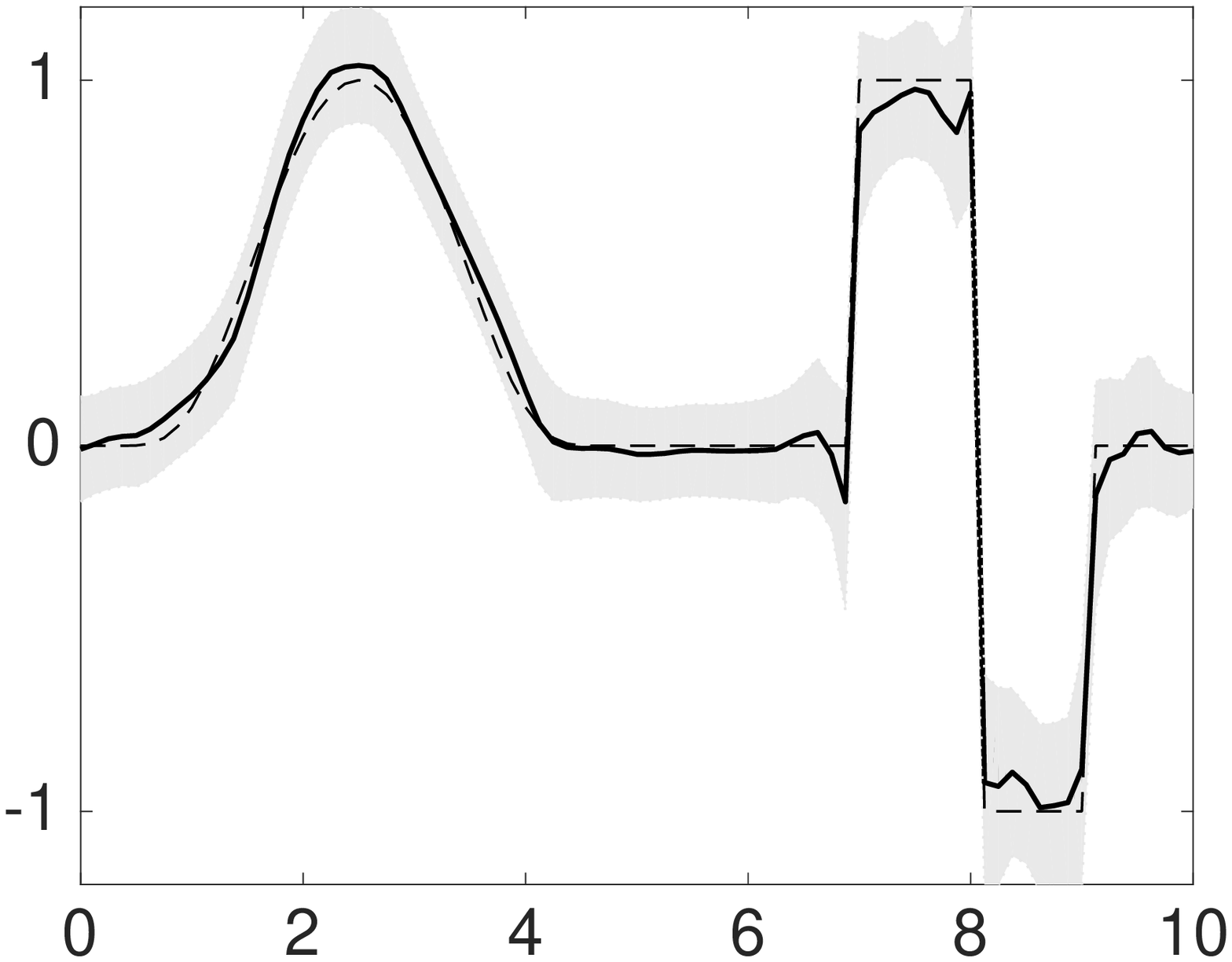}}
  \subcaptionbox{Gaussian hypermodel estimate  $v^N$ on measurement grid}{\includegraphics[width=0.32\textwidth]{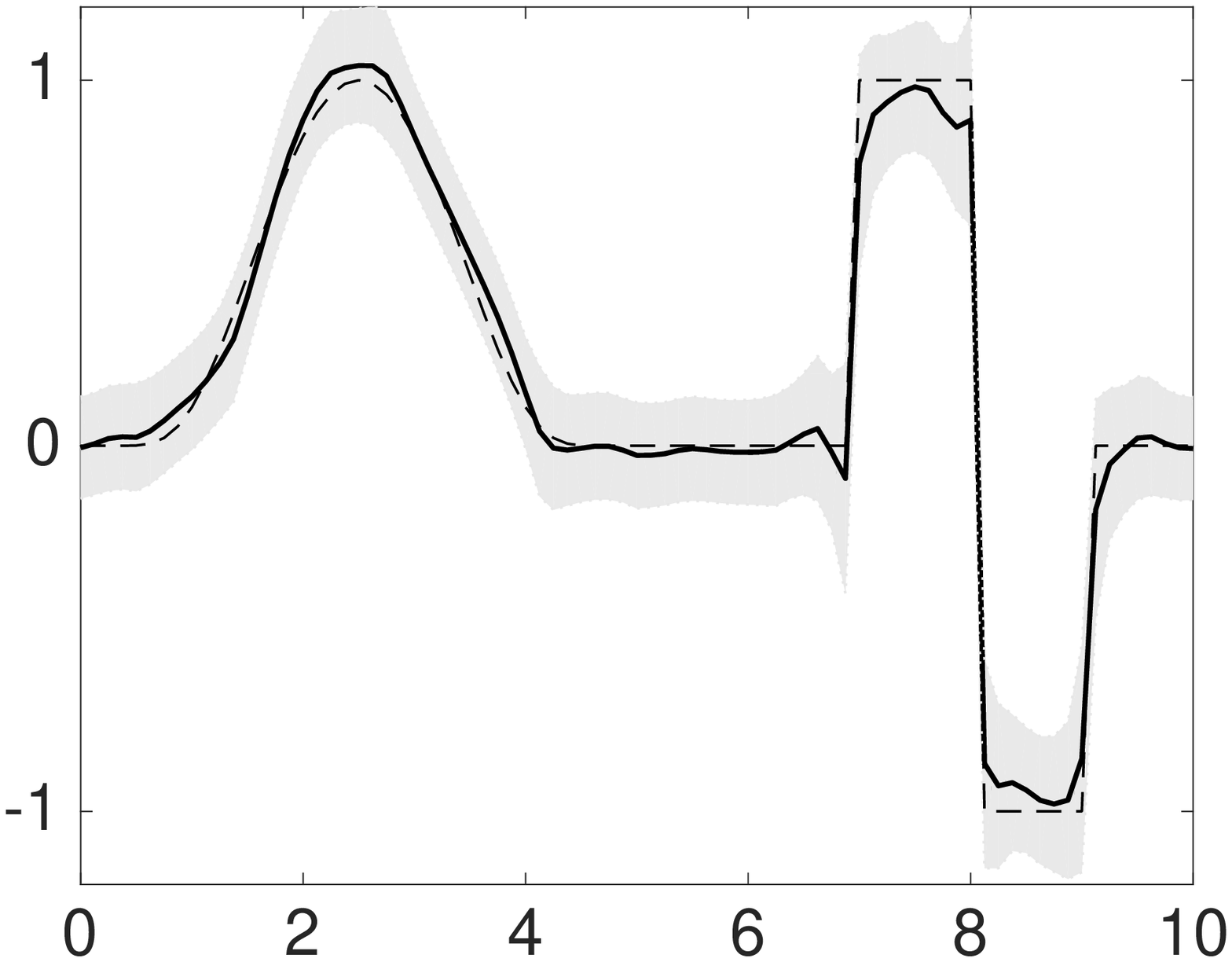}} \\

  \subcaptionbox{Minimum RMSE estimate with Gaussian prior}{\includegraphics[width=0.32\textwidth]{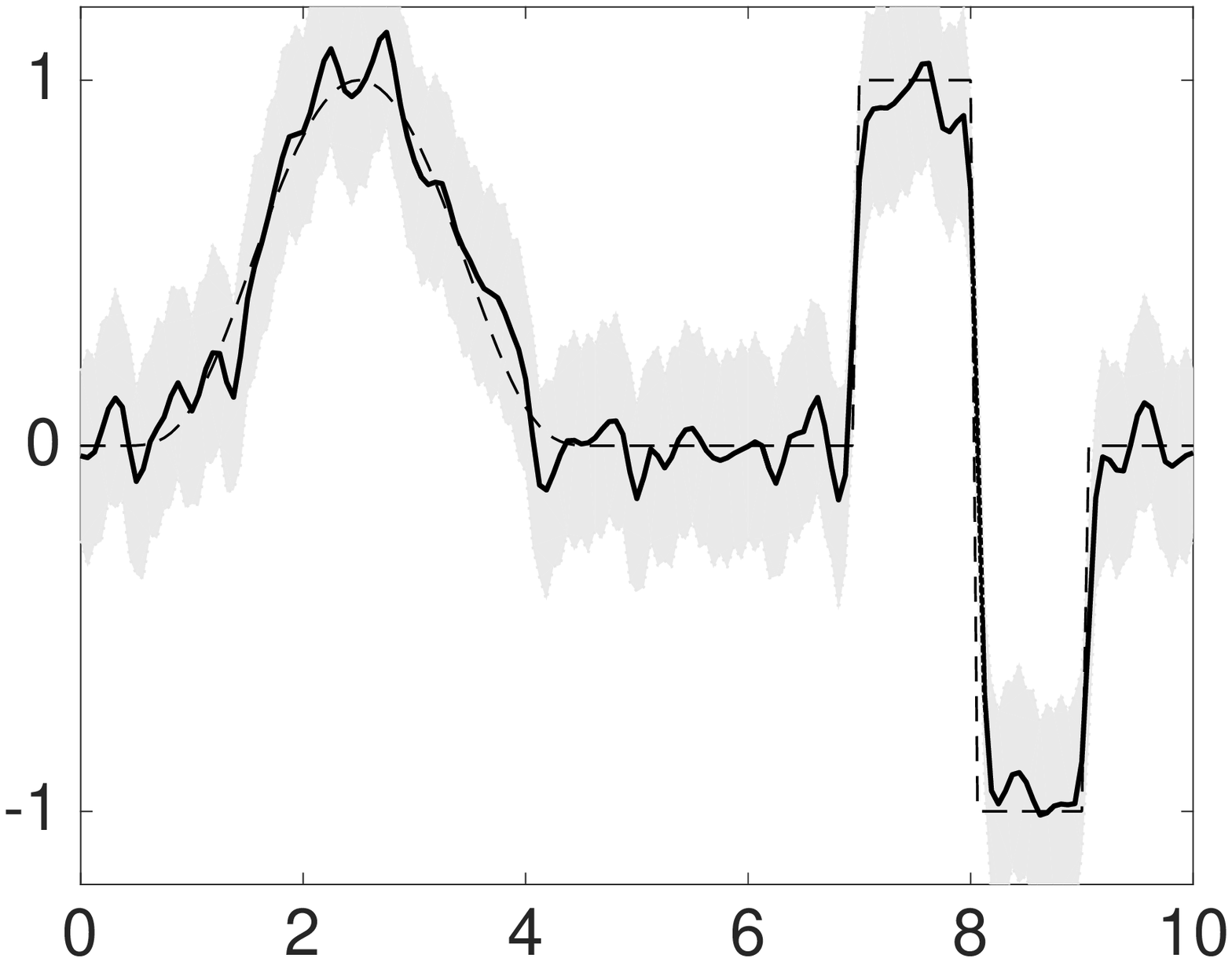}} 
  \subcaptionbox{Cauchy hypermodel estimate $v^N$ between measurement grid}{\includegraphics[width=0.32\textwidth]{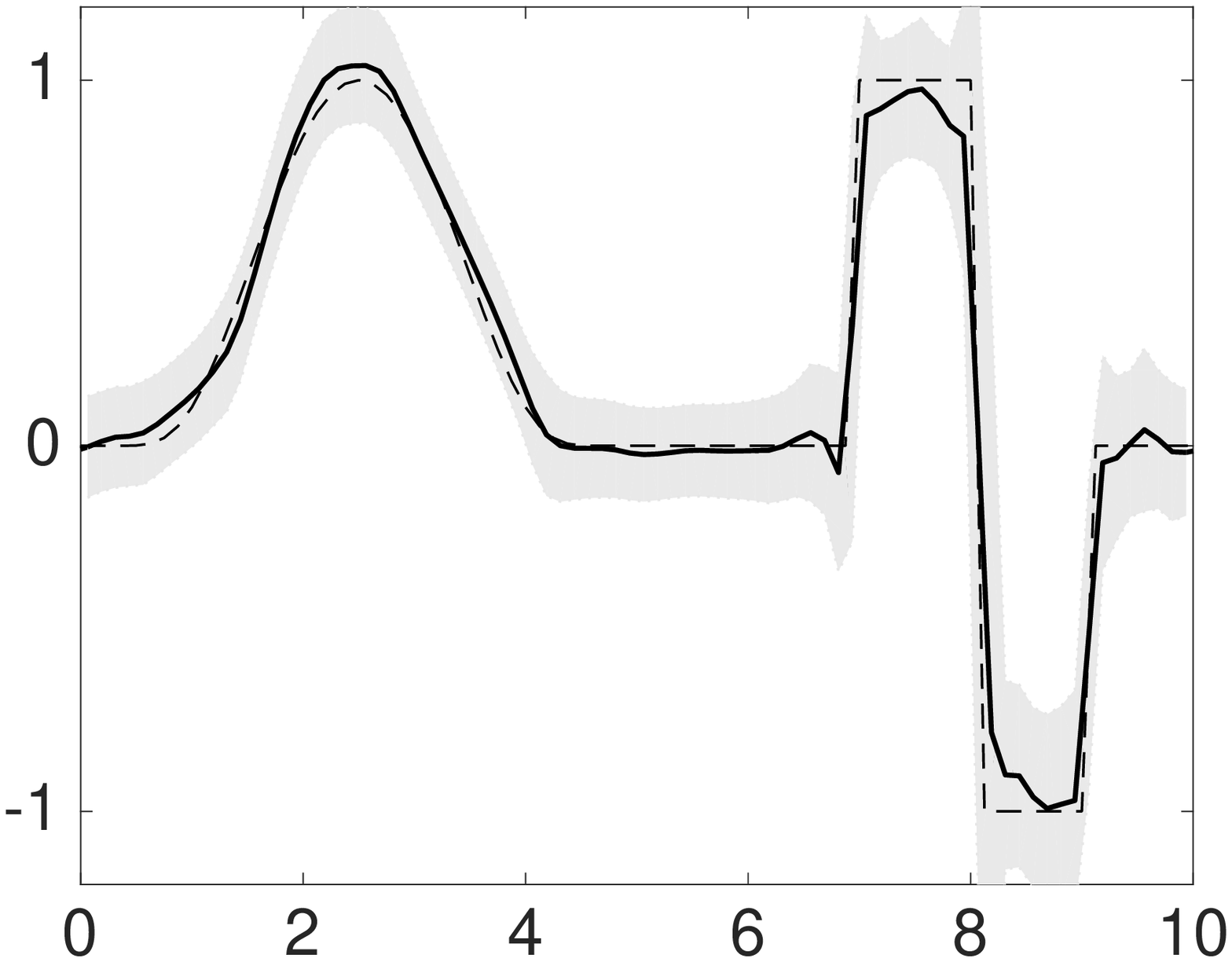}}
  \subcaptionbox{Gaussian hypermodel estimate  $v^N$ between measurement grid}{\includegraphics[width=0.32\textwidth]{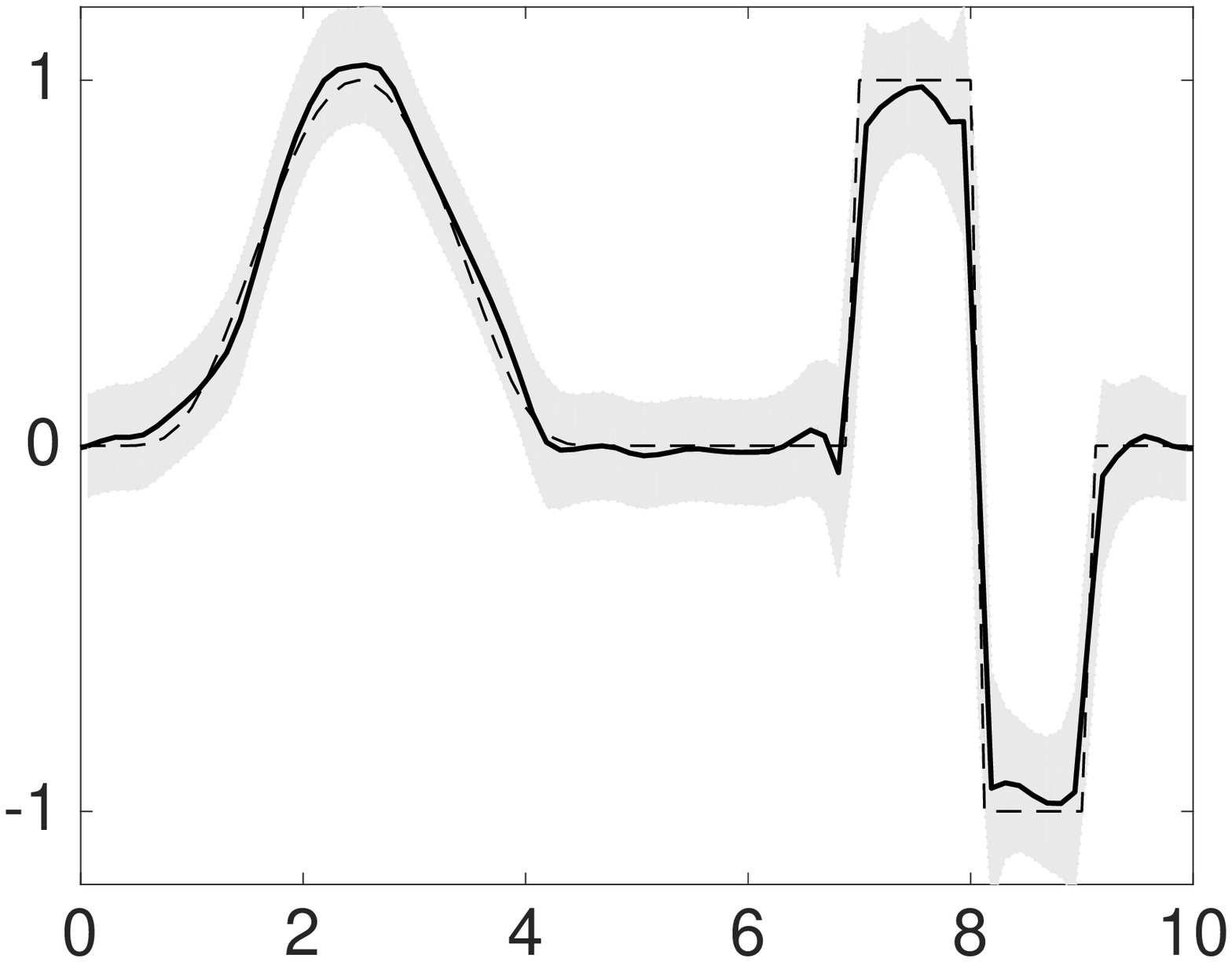}}

  \caption{Top panel: 81 noisy measurements and estimated $\ell^N$ ($N=161$) with Cauchy noise (B) and Gaussian hyperprior (C). (D,G,J) are conditional mean estimates of $v^N$ ($N=161$) with long length-scaling (D), $\ell^N$ minimising MAE (G), $\ell^N$ minimising RMSE. (E,H,K) and (F,I,L) are CM-estimates of $v^N$ on different meshes with Cauchy hypermodel and Gaussian hypermodels, respectively. 
  } \label{fig:CM_est_1}
  \end{center}
\end{figure}

In Figures \ref{fig:CM_est_discretisation_invariance} and \ref{fig:CM_est_GMRF_discretisation_invariance2}, we study the behaviour of the $\ell^N$ and $v^N$, when $N$ changes, i.e.\ we numerically study discretisation-invariance.
We choose $N = 81,\ 161,\ 321$.
The Cauchy and Gaussian hypermodels, as well as the forward theory, are the same as in example in Figure \ref{fig:CM_est_1}.
As we have constructed the hypermodels based on continuous-parameter processes, we assume that the finite-dimensional estimates essentially look the same.
This was covered theoretically in Section \ref{sec:discretisation}.
This behaviour can be verified from all the $v^N$ estimates visually rather easily, but the $\ell^N$ are not as well behaving.
The reason is mostly due to too short chains, but already with the chains here, with $K=100,000$, the essential features are in practice rather similar.

We have also plotted the MCMC chains and cumulative means for 15$^{(\mathrm{th})}$ and 66$^{(\mathrm{th})}$ elements of $\ell^N$ and $v^N$ with  $N = 81$. %
The elements are chosen in such a way the the 15$^{(\mathrm{th})}$ element is on a smoothly varying part of the unknown, hence long length-scaling. 
Around 66$^{(\mathrm{th})}$ element, we expect to detect an edge, hence short length-scaling.
The chains show both good mixing and convergence to the values expected. 
%
We use $50,000$ as burning period.
As can be seen from the figures, we could use much shorter burning periods and chains, but here our objective is simply to demonstrate the estimators, not optimisation of the chains.
Hence, we leave optimisation of MCMC chains for future studies.

\begin{figure}[htp]
\begin{center}
  \subcaptionbox{$\ell^N,\ N=81$}{\includegraphics[width=0.32\textwidth]{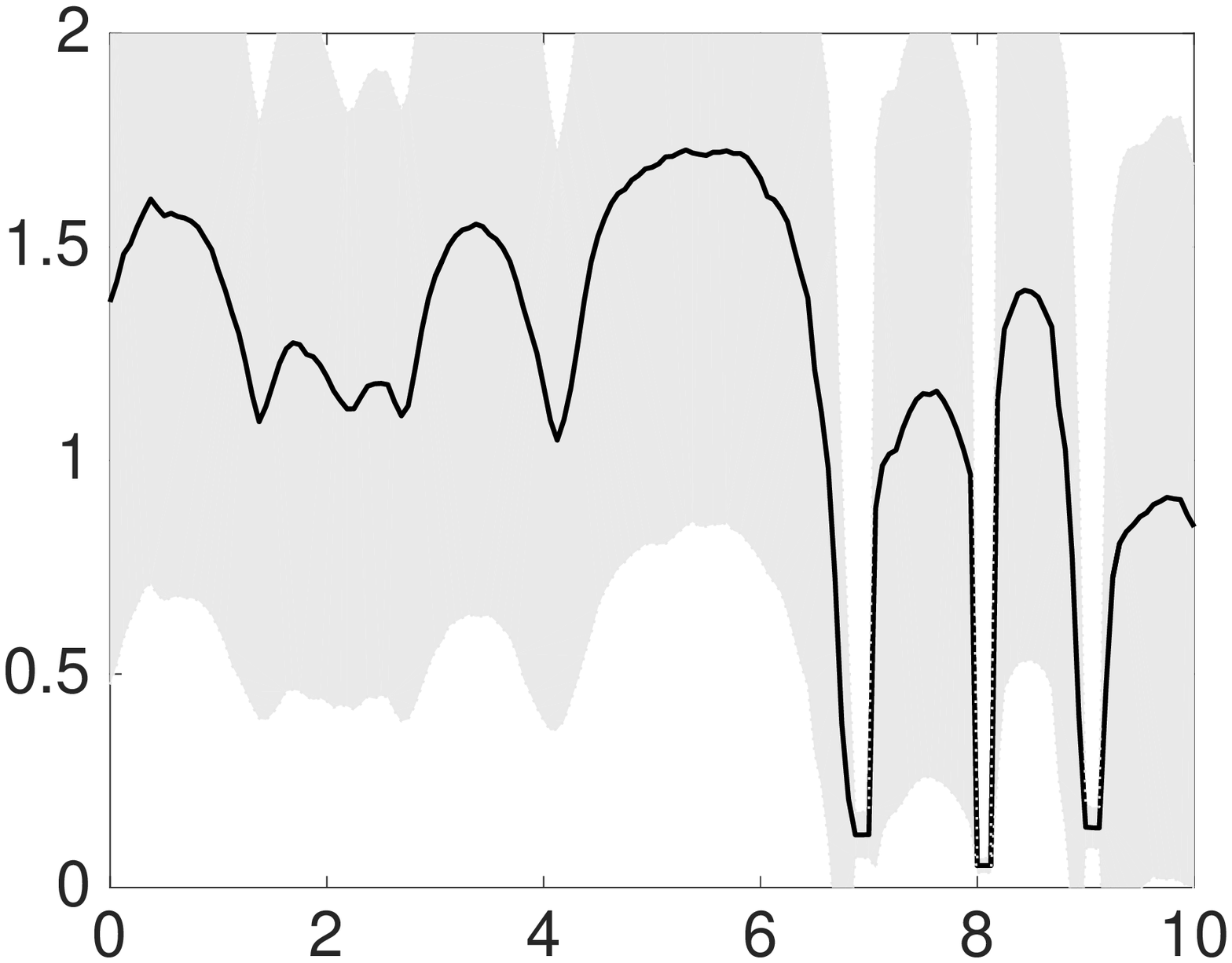}} 
  \subcaptionbox{$\ell^N,\ N=161$}{\includegraphics[width=0.32\textwidth]{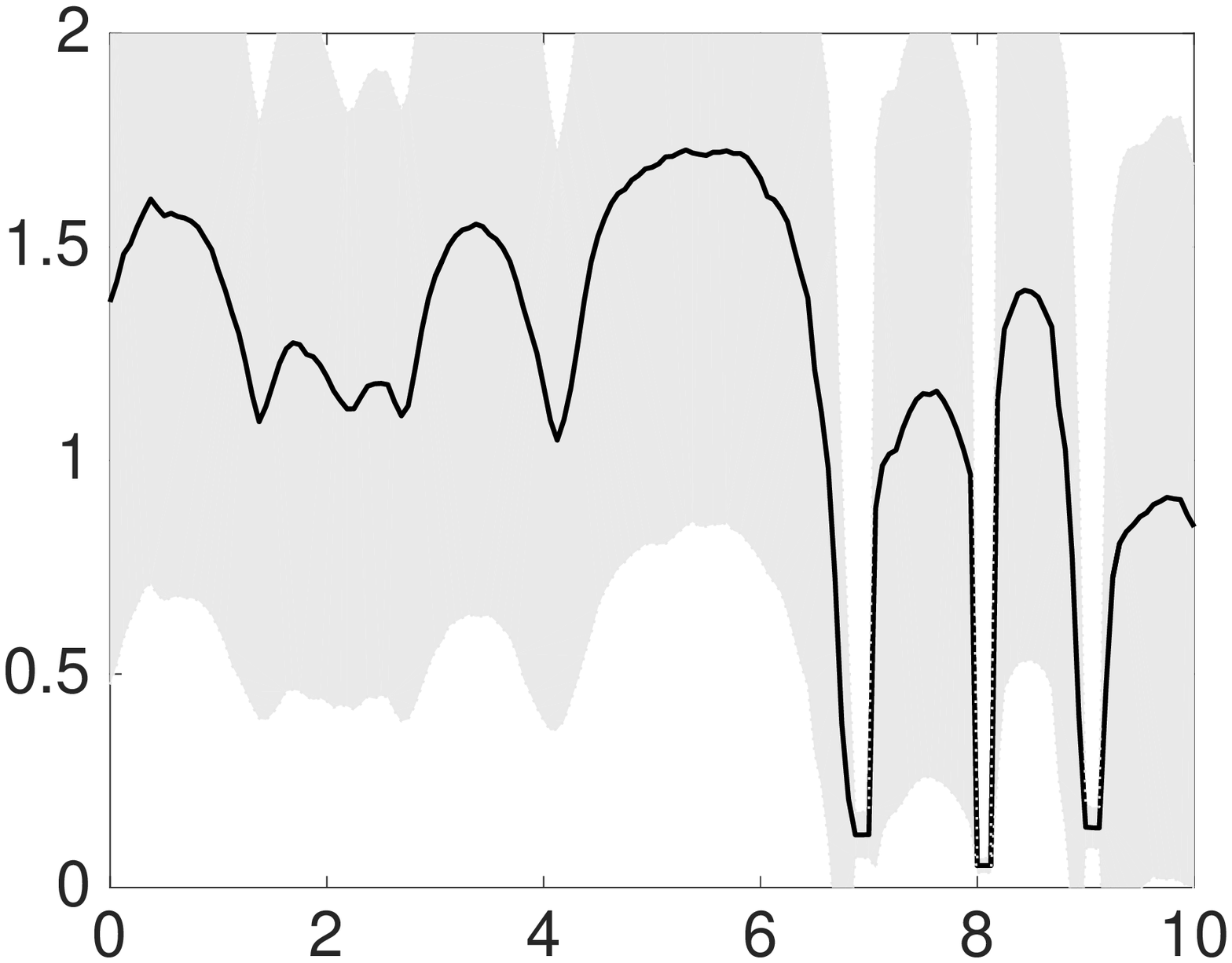}} 
  \subcaptionbox{$\ell^N,\ N=321$}{\includegraphics[width=0.32\textwidth]{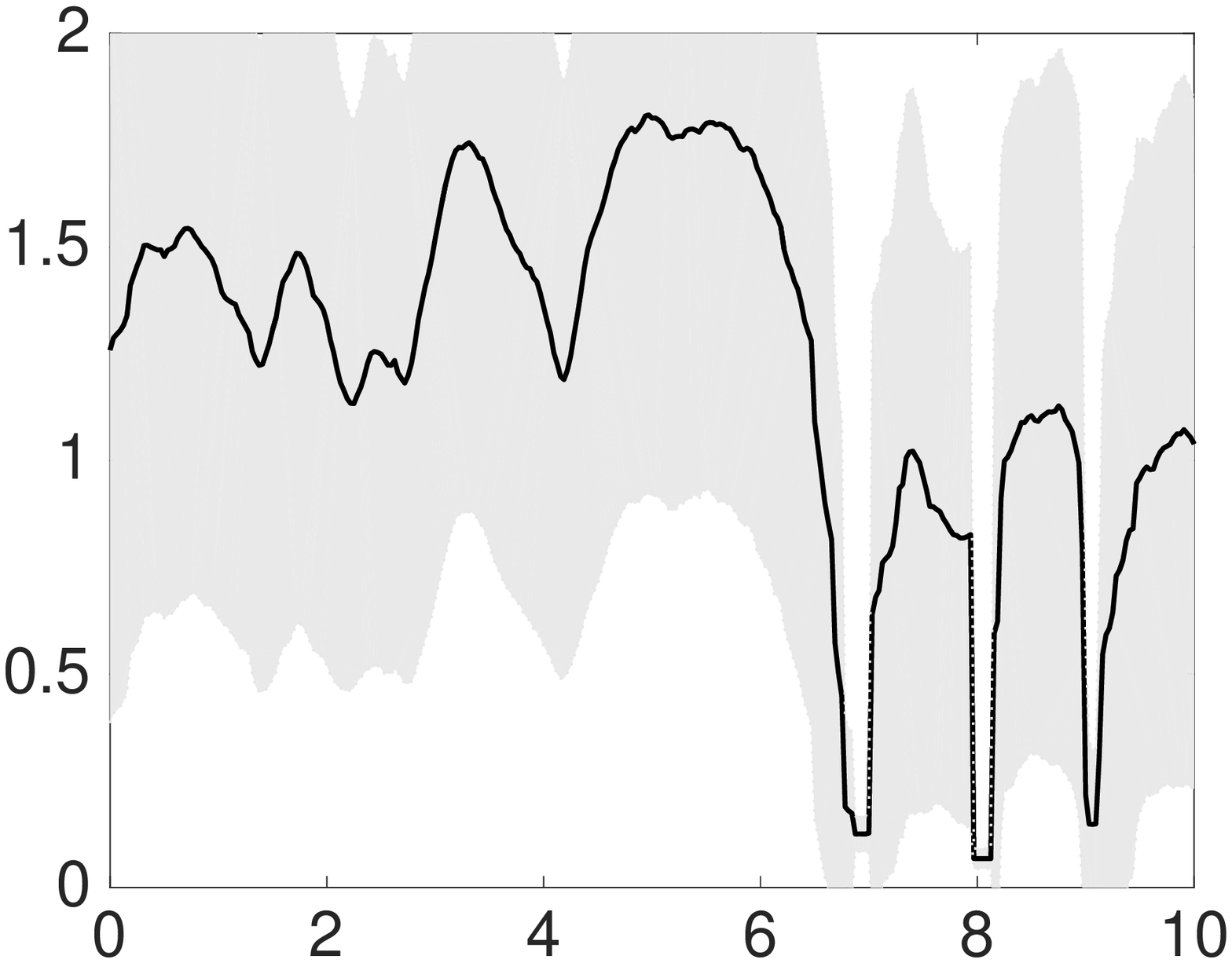}} \\
  
  \subcaptionbox{$v^N,\ N=81$}{\includegraphics[width=0.32\textwidth]{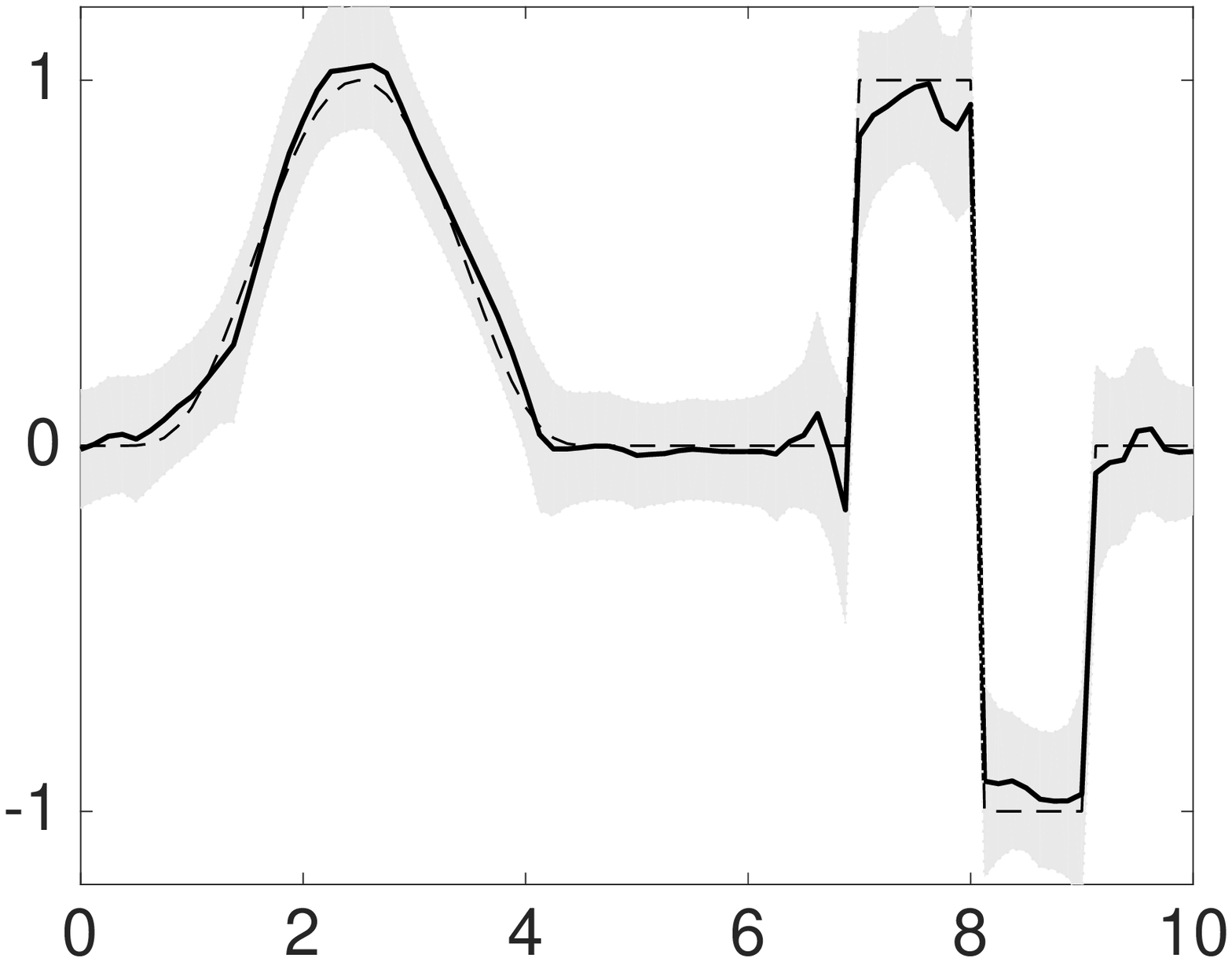}} 
  \subcaptionbox{$v^N,\ N=161$}{\includegraphics[width=0.32\textwidth]{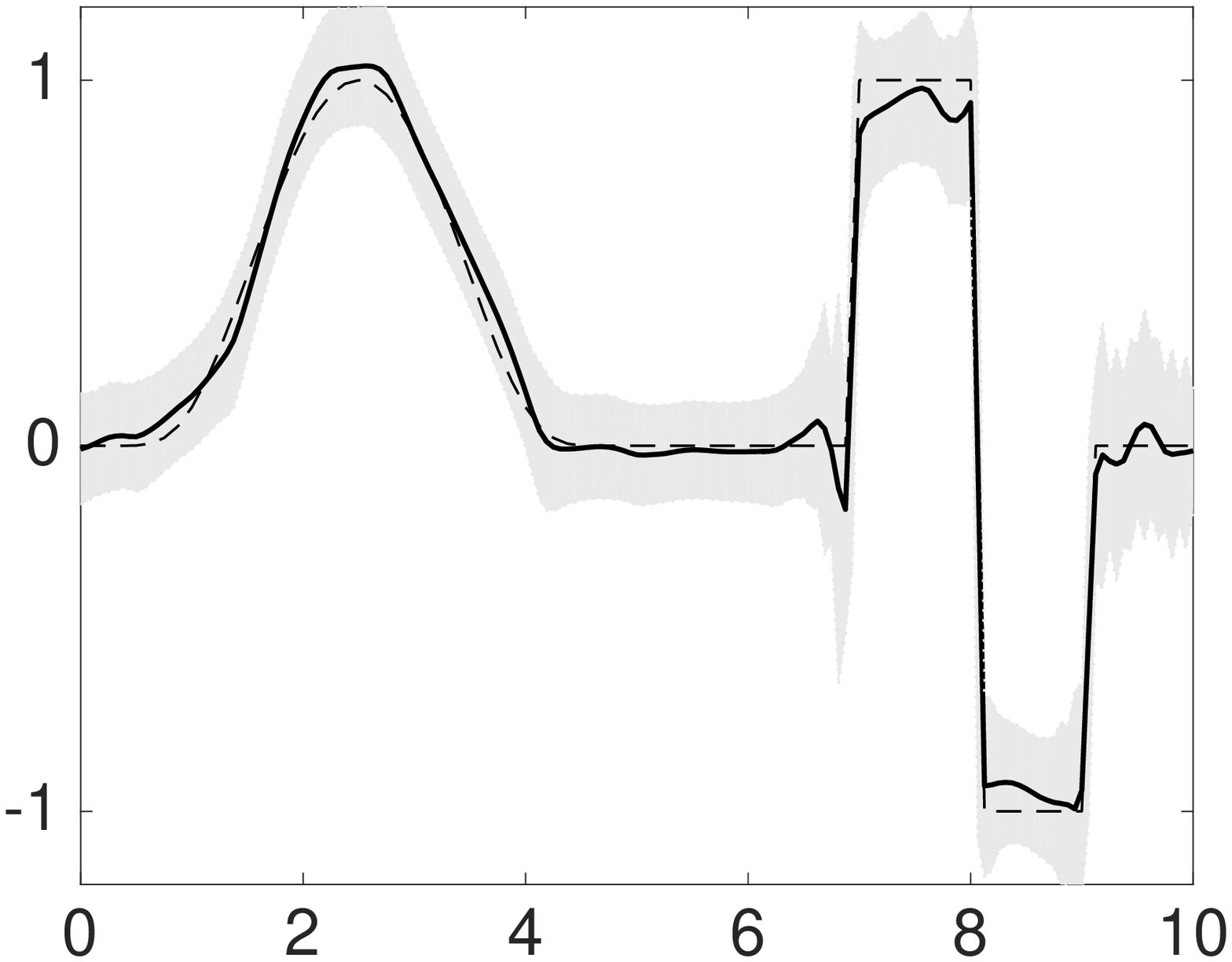}} 
  \subcaptionbox{$v^N,\ N=321$}{\includegraphics[width=0.32\textwidth]{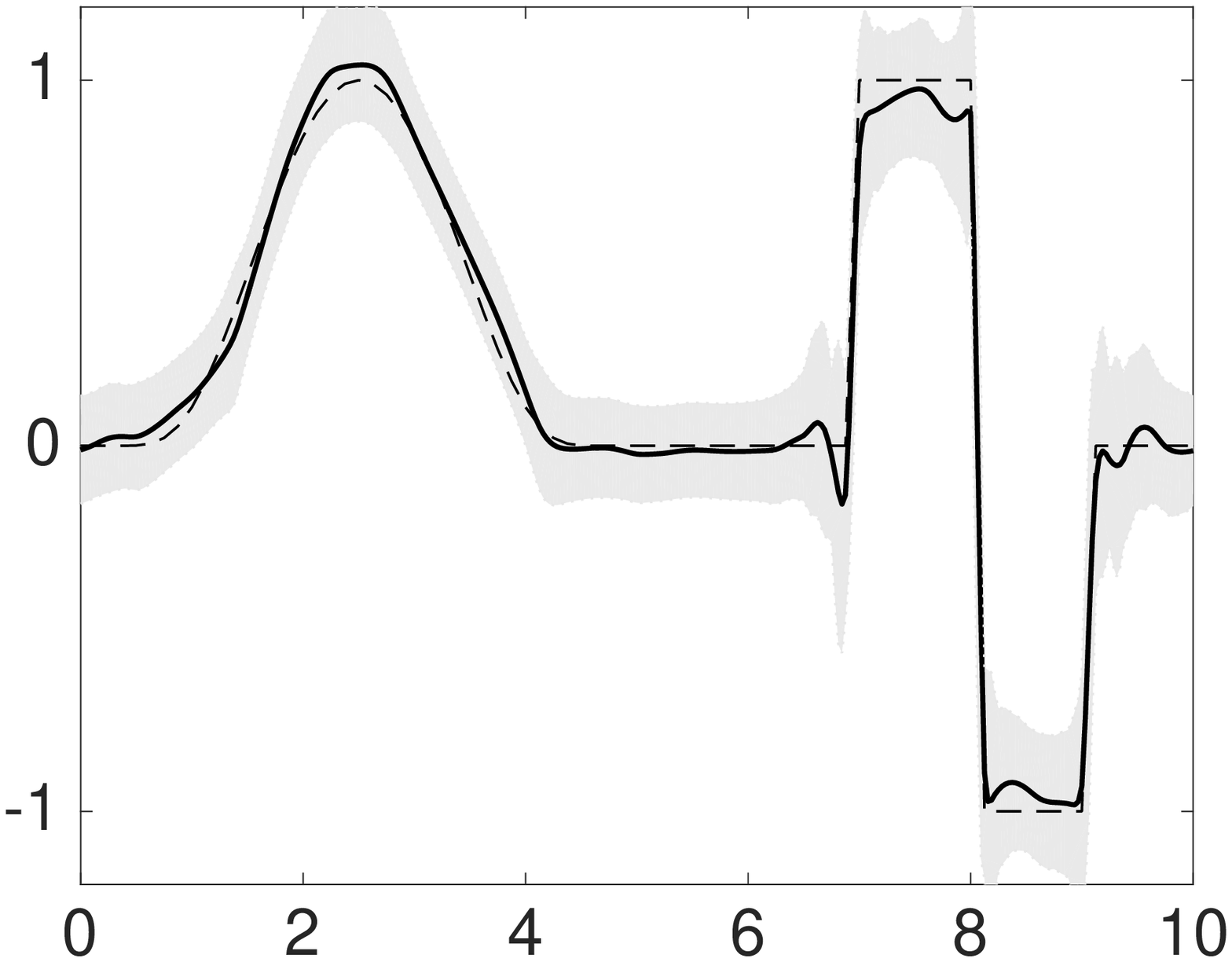}} \\
  
  \subcaptionbox{$\ell^N_{15},\ N=81$ chain and cumulative mean.}{\includegraphics[width=0.32\textwidth]{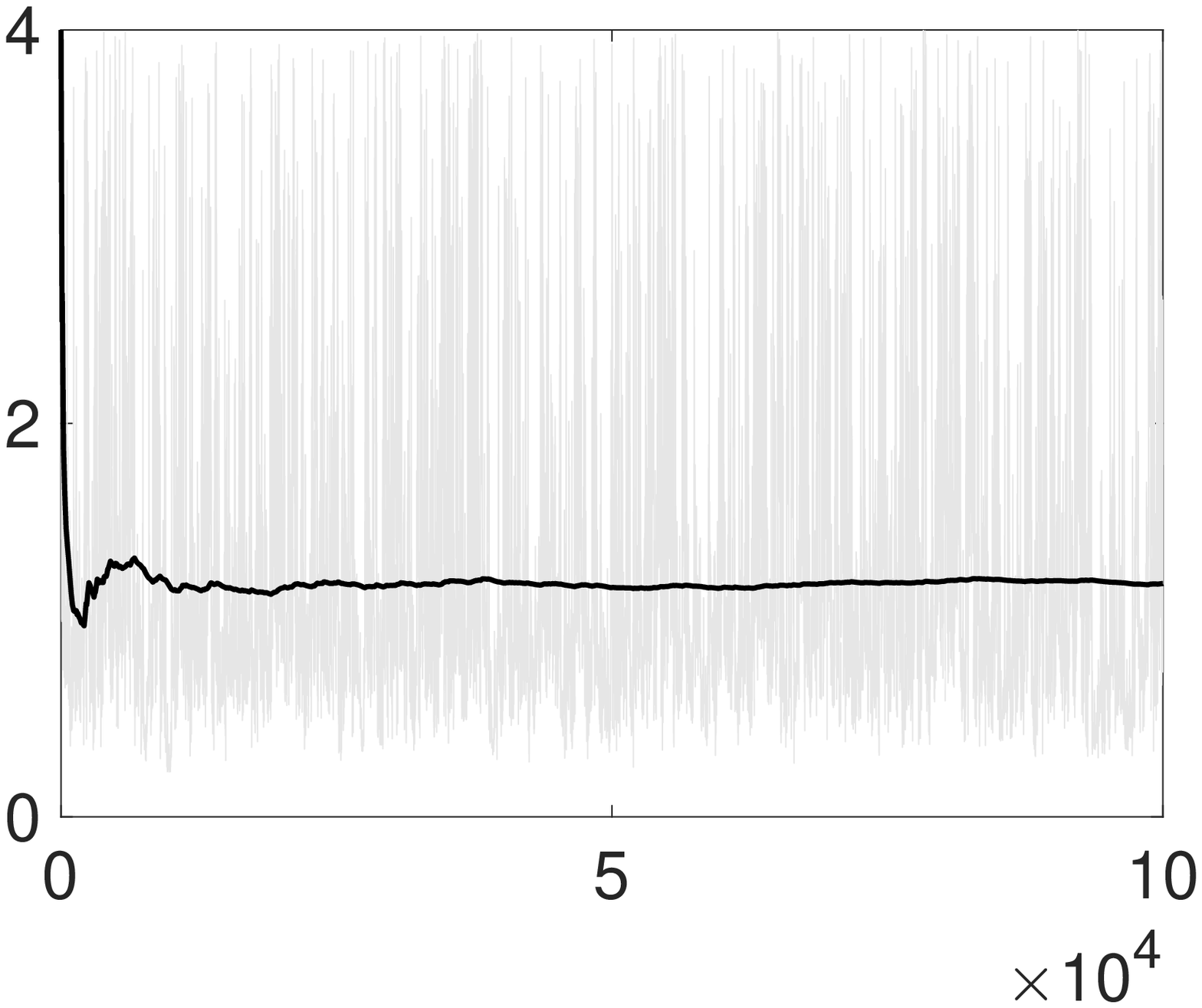}} 
  \subcaptionbox{$\ell^N_{66},\ N=81$ chain and cumulative mean.}{\includegraphics[width=0.32\textwidth]{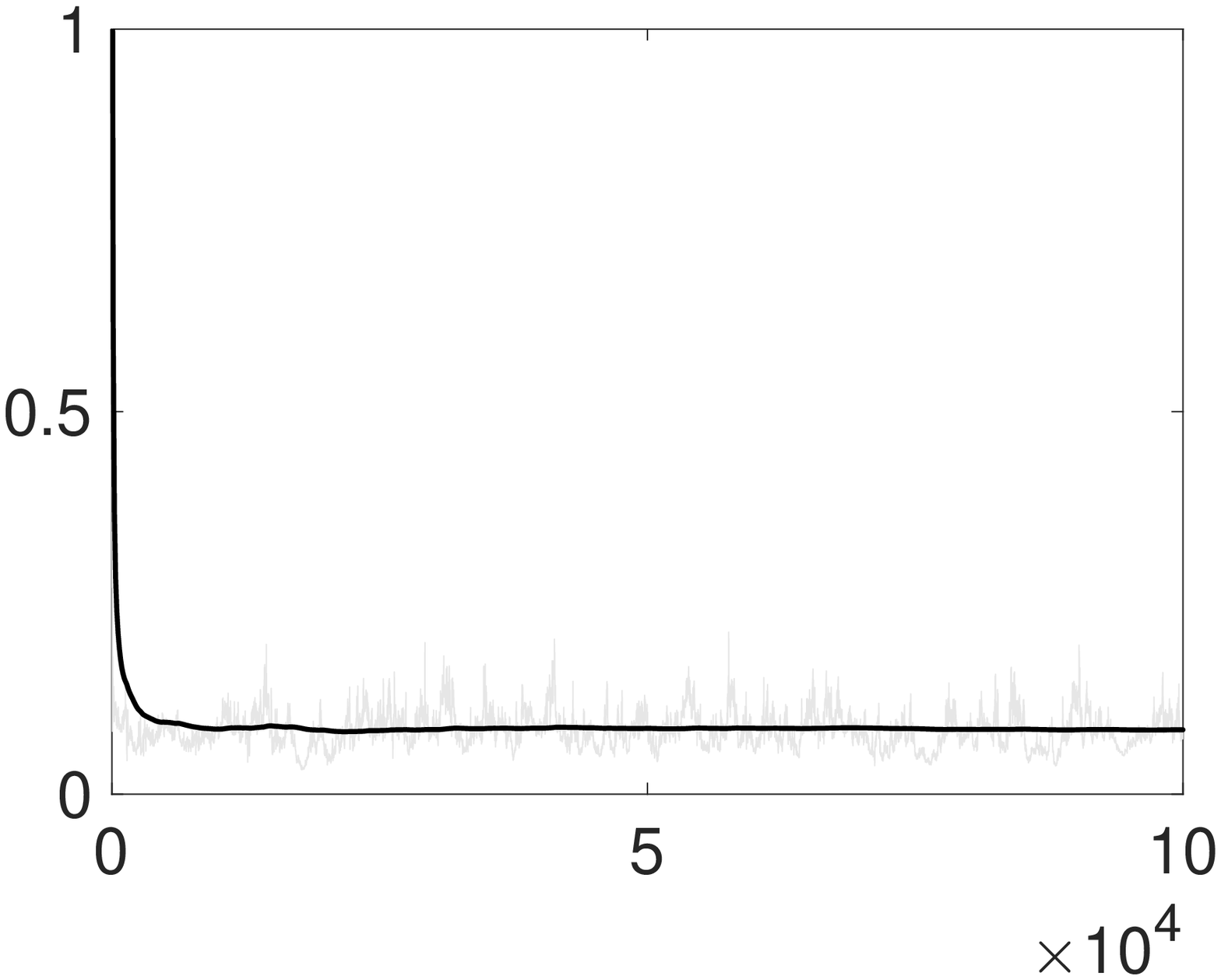}} \\
  \subcaptionbox{$v^N_{15},\ N=81$ chain and cumulative mean.}{\includegraphics[width=0.32\textwidth]{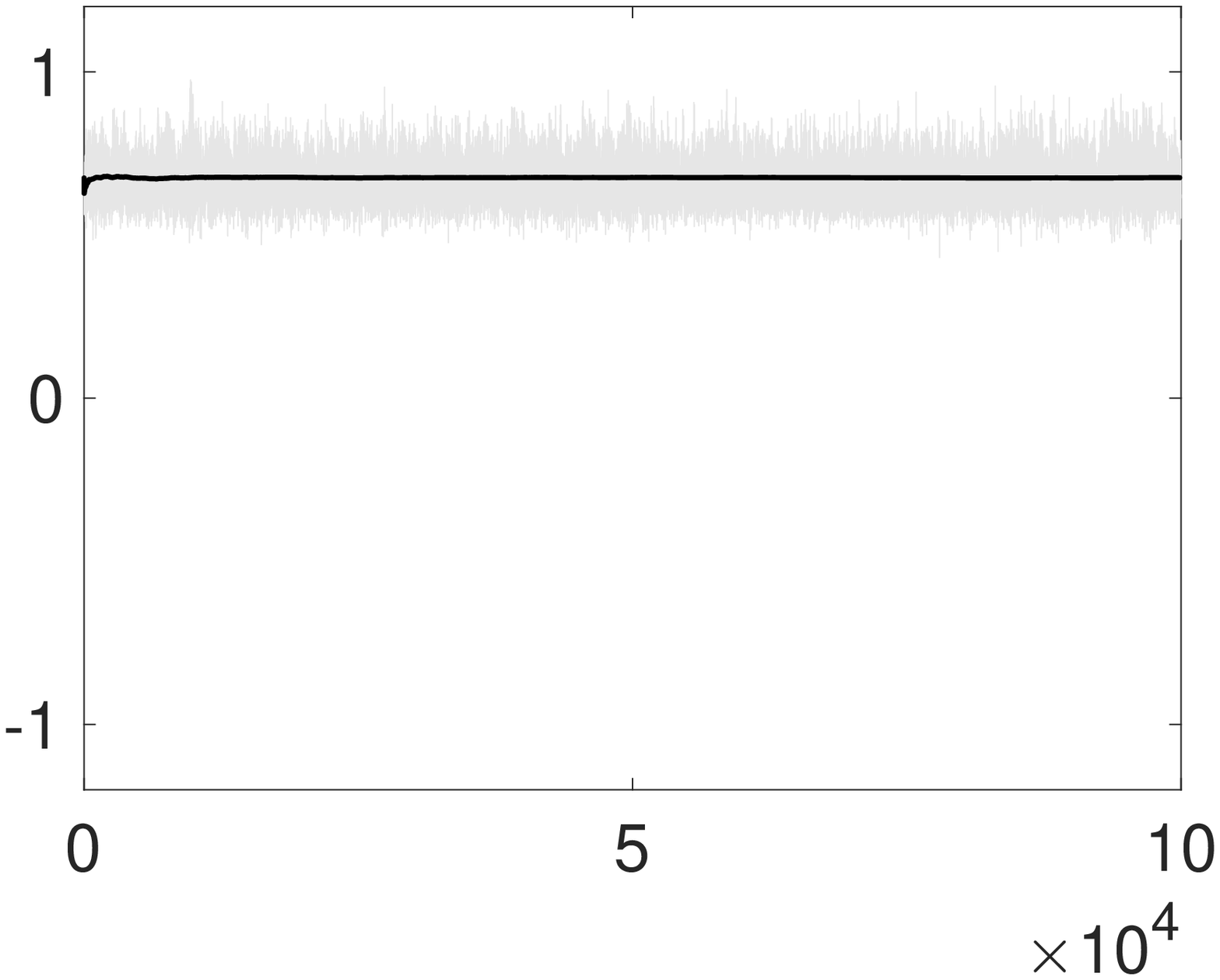}} 
  \subcaptionbox{$v^N_{66},\ N=81$ chain and cumulative mean.}{\includegraphics[width=0.32\textwidth]{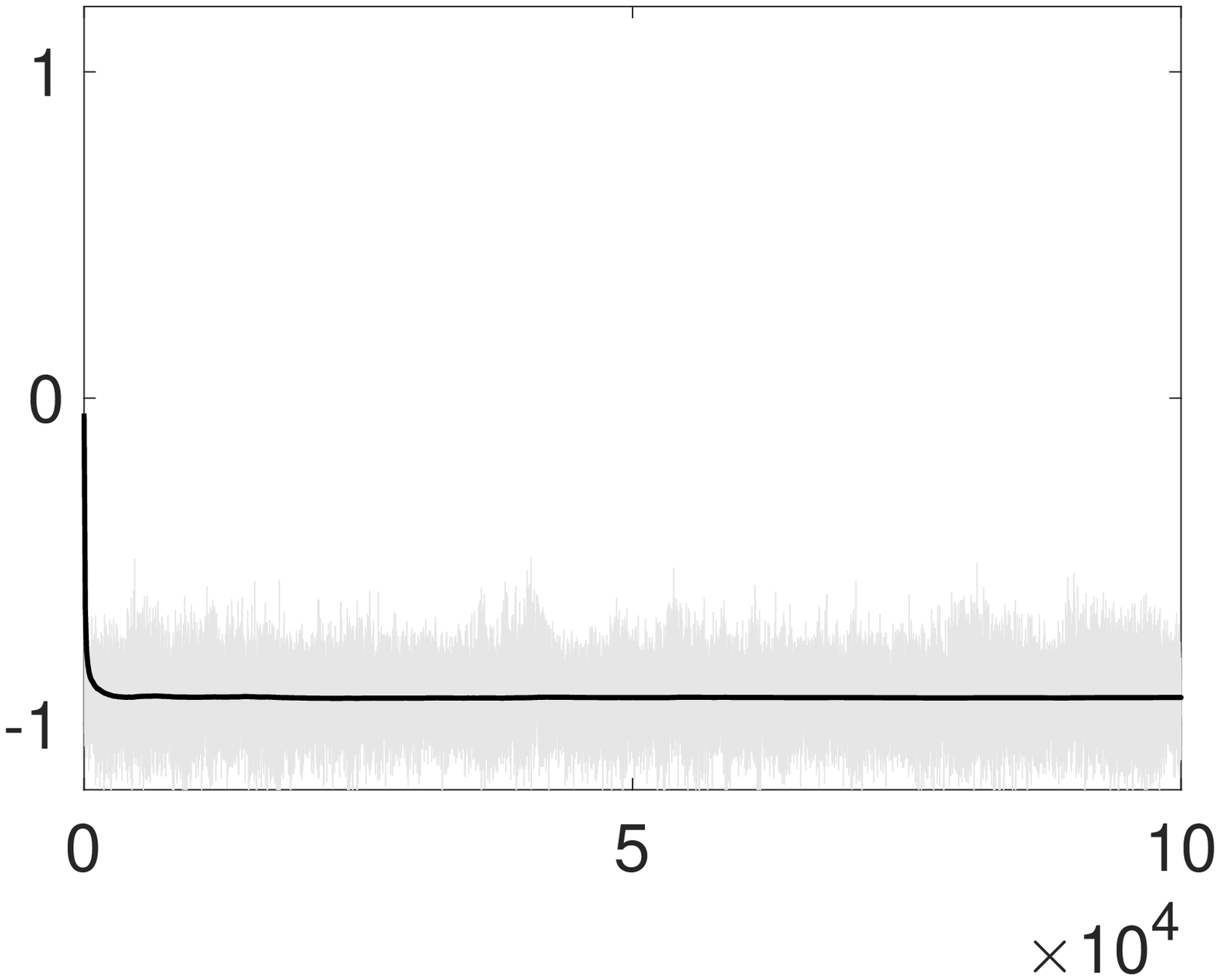}} 

  \caption{Estimates of $\ell^N$ and $v^N$ with a Cauchy walk hypermodel $u^N$ on different lattices with $81$ measurements, with the number of unknowns varying as in figures.  Bottom four subfigures (G-J) are chains and cumulative means of certain $\ell^N$ and $v^N$ elements.} \label{fig:CM_est_discretisation_invariance}
  \end{center}
\end{figure}

\begin{figure}[htp]
\begin{center}

  \subcaptionbox{$\ell^N,\ N=81$}{\includegraphics[width=0.32\textwidth]{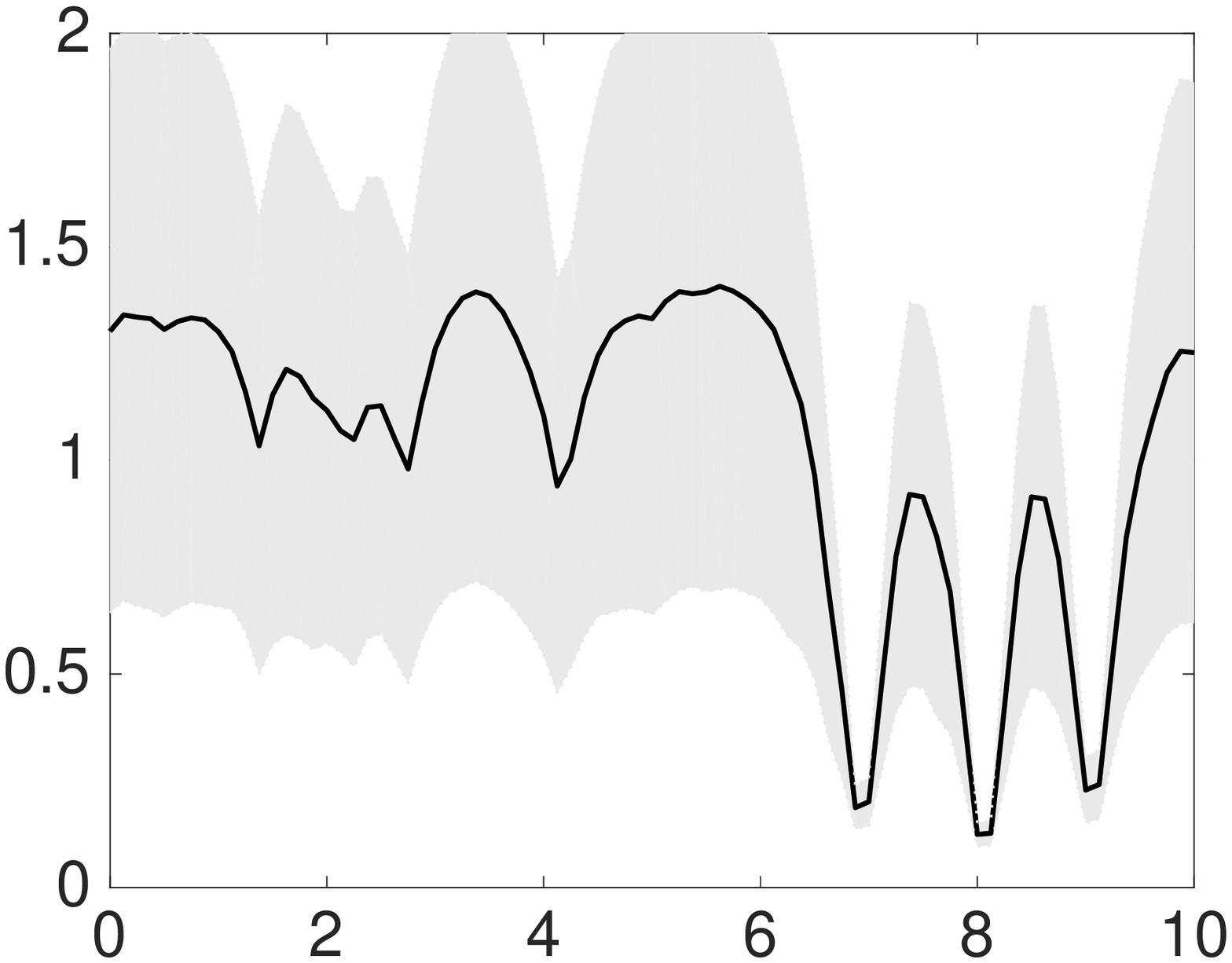}} 
  \subcaptionbox{$\ell^N,\ N=161$}{\includegraphics[width=0.32\textwidth]{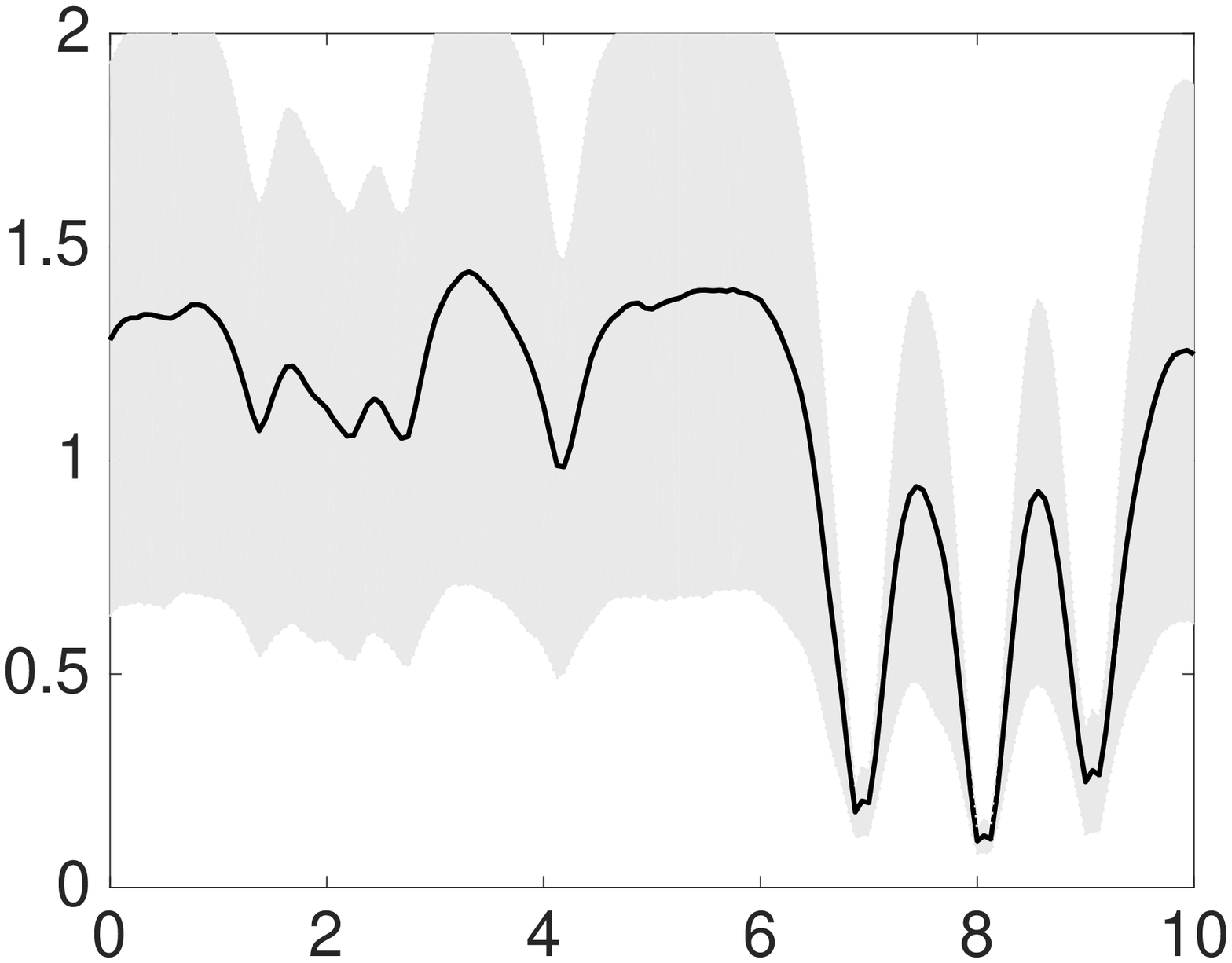}} 
  \subcaptionbox{$\ell^N,\ N=321$}{\includegraphics[width=0.32\textwidth]{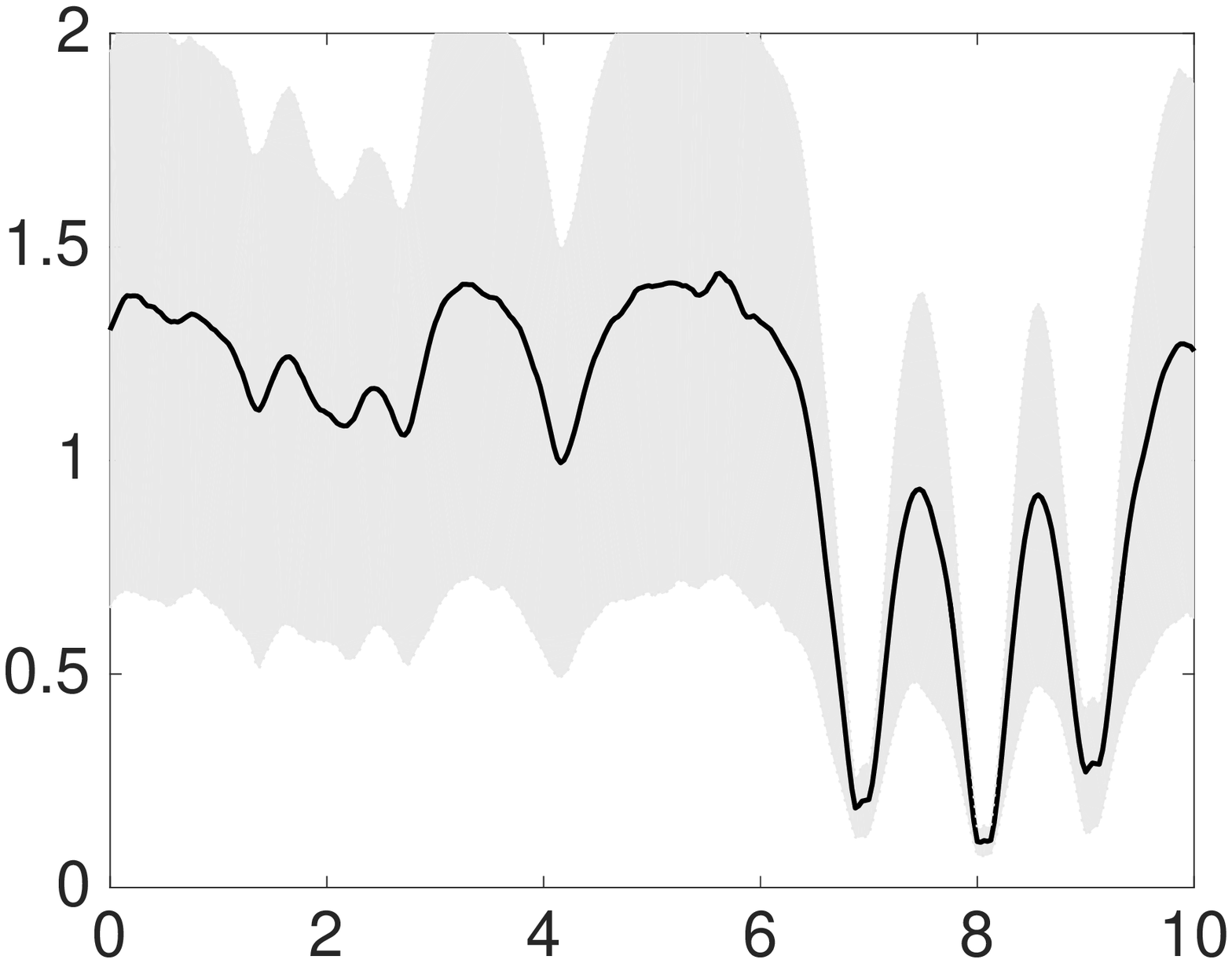}} \\
  
  \subcaptionbox{$v^N,\ N=81$}{\includegraphics[width=0.32\textwidth]{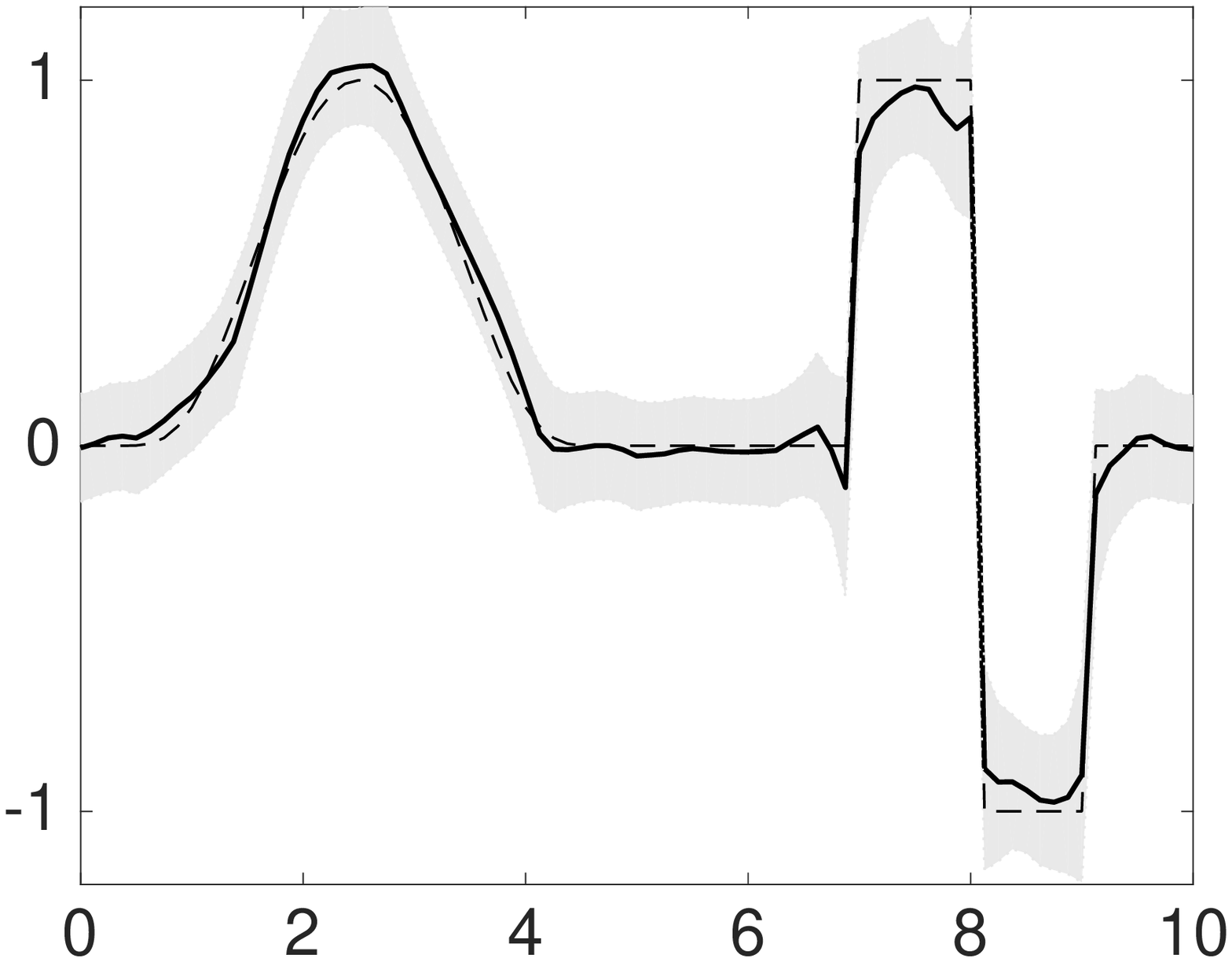}} 
  \subcaptionbox{$v^N,\ N=161$}{\includegraphics[width=0.32\textwidth]{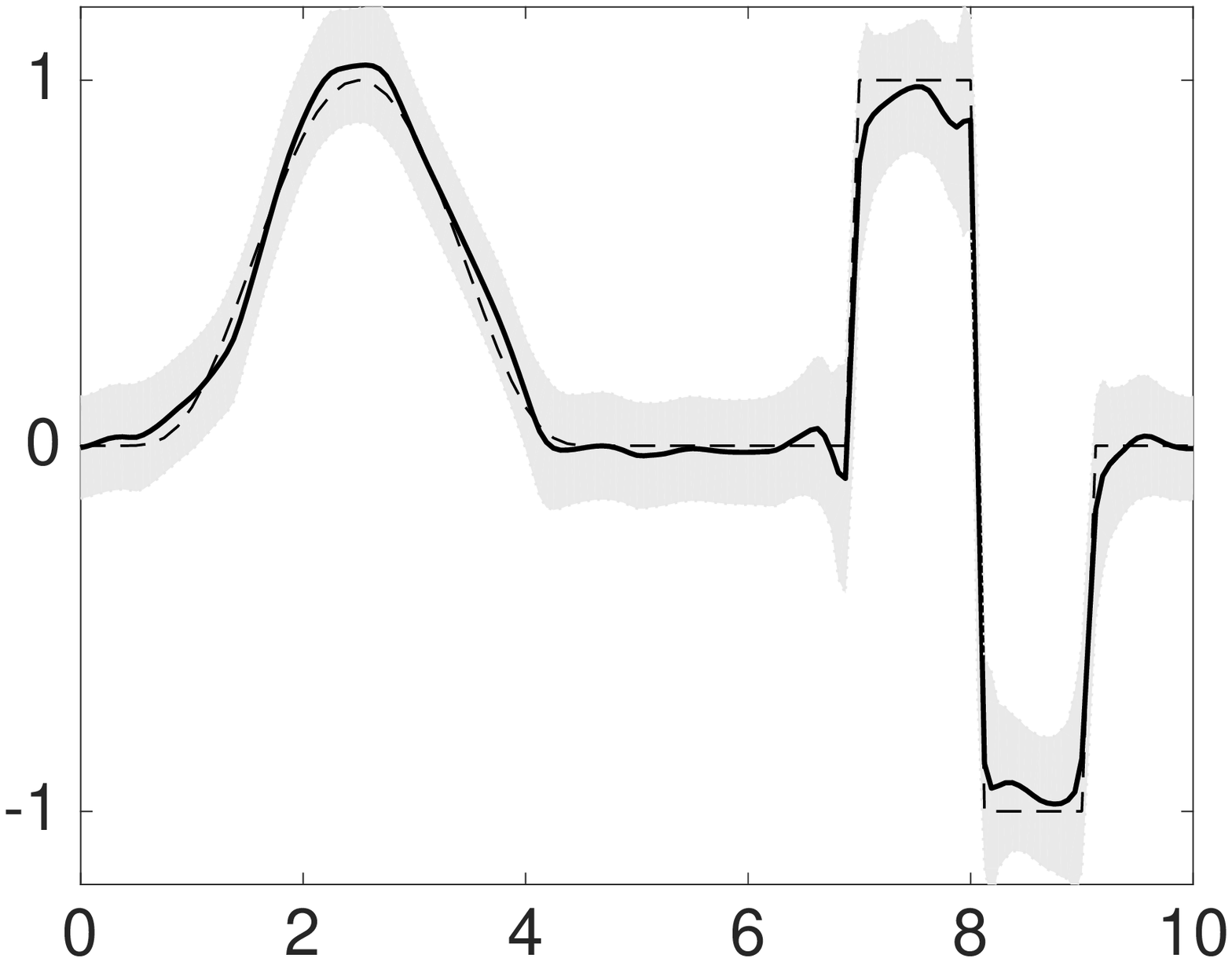}} 
  \subcaptionbox{$v^N,\ N=321$}{\includegraphics[width=0.32\textwidth]{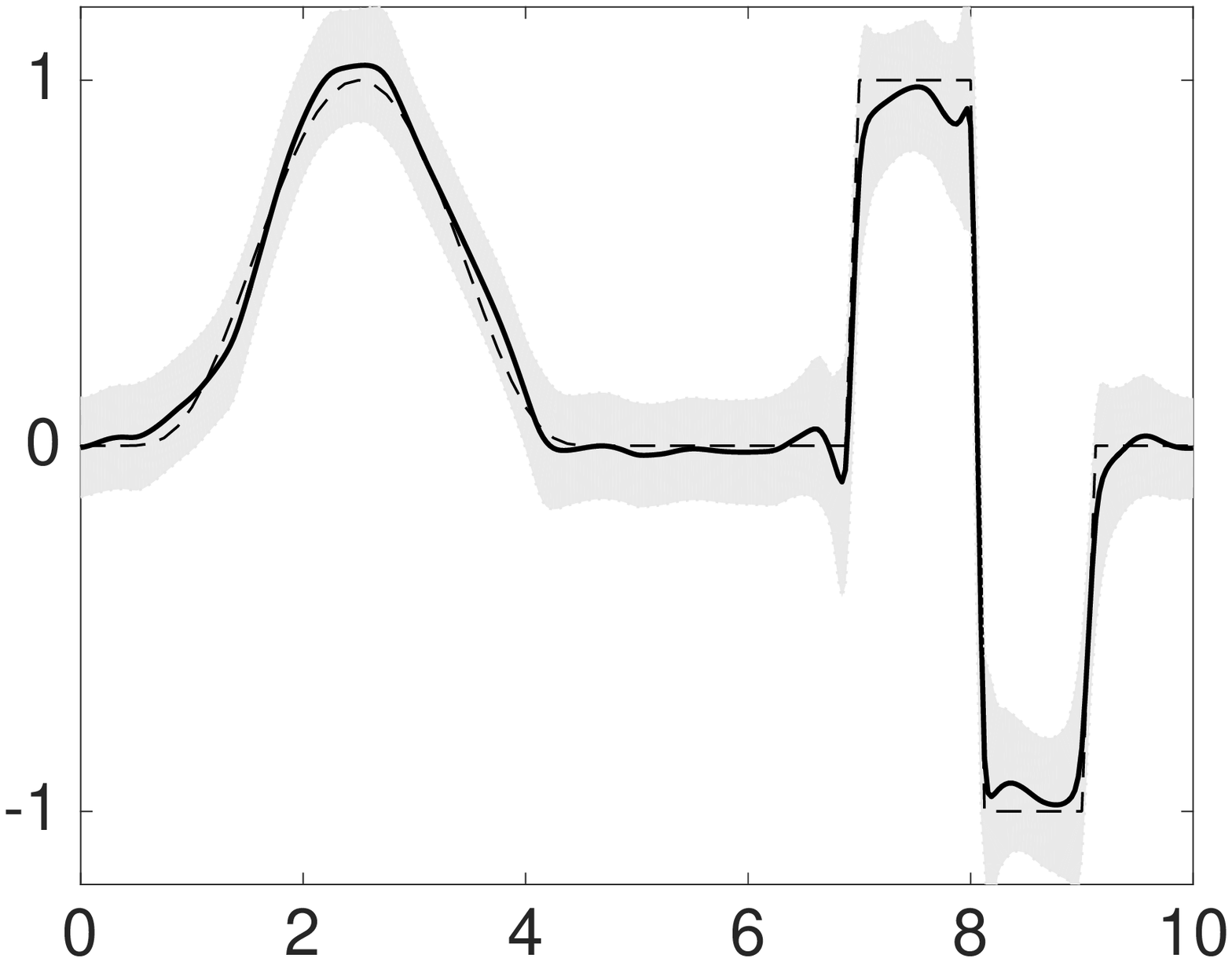}} \\

  \subcaptionbox{$\ell^N_{15},\ N=81$ chain and cumulative mean.}{\includegraphics[width=0.32\textwidth]{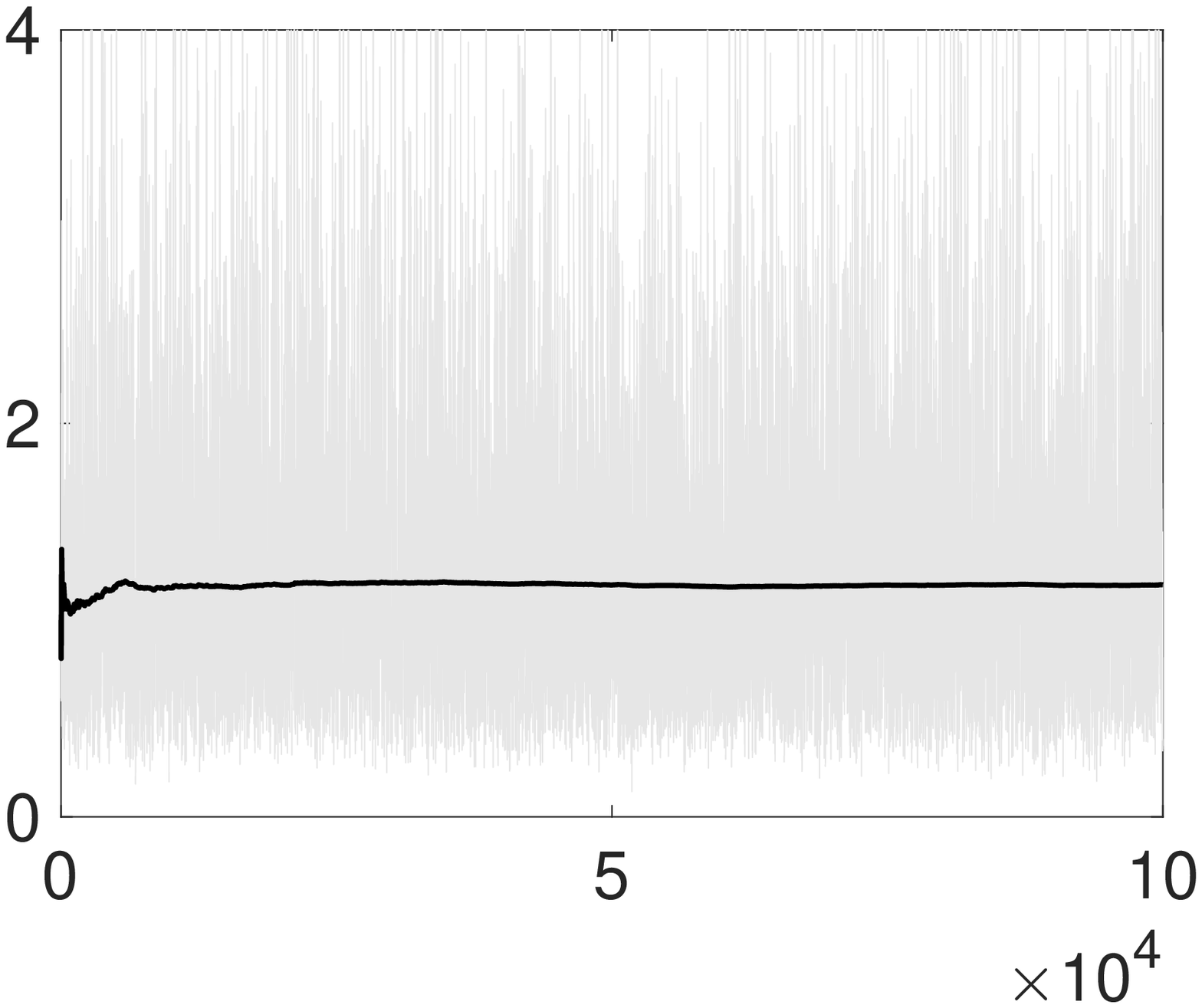}} 
  \subcaptionbox{$\ell^N_{66},\ N=81$ chain and cumulative mean.}{\includegraphics[width=0.32\textwidth]{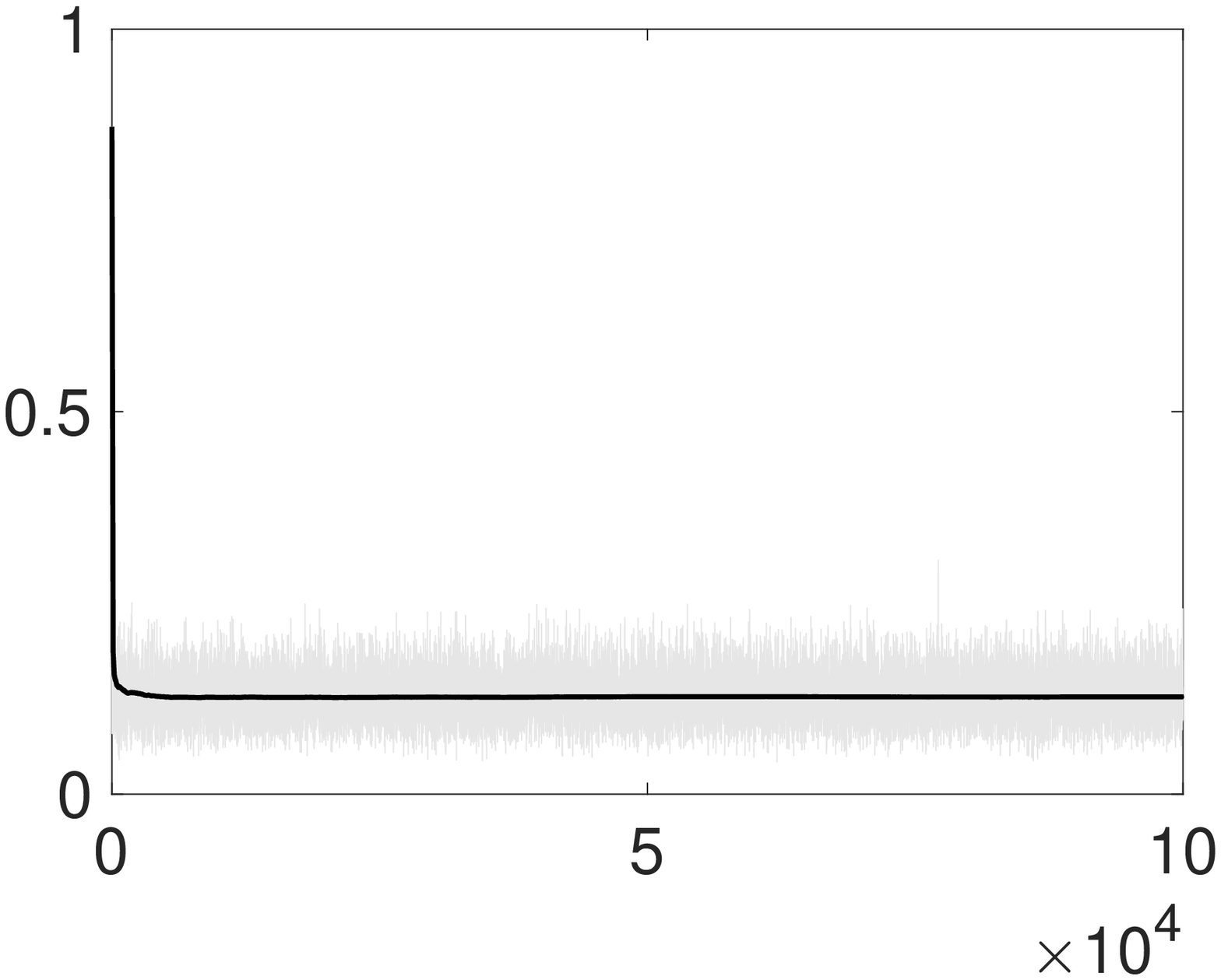}} \\
  \subcaptionbox{$v^N_{15},\ N=81$ chain and cumulative mean.}{\includegraphics[width=0.32\textwidth]{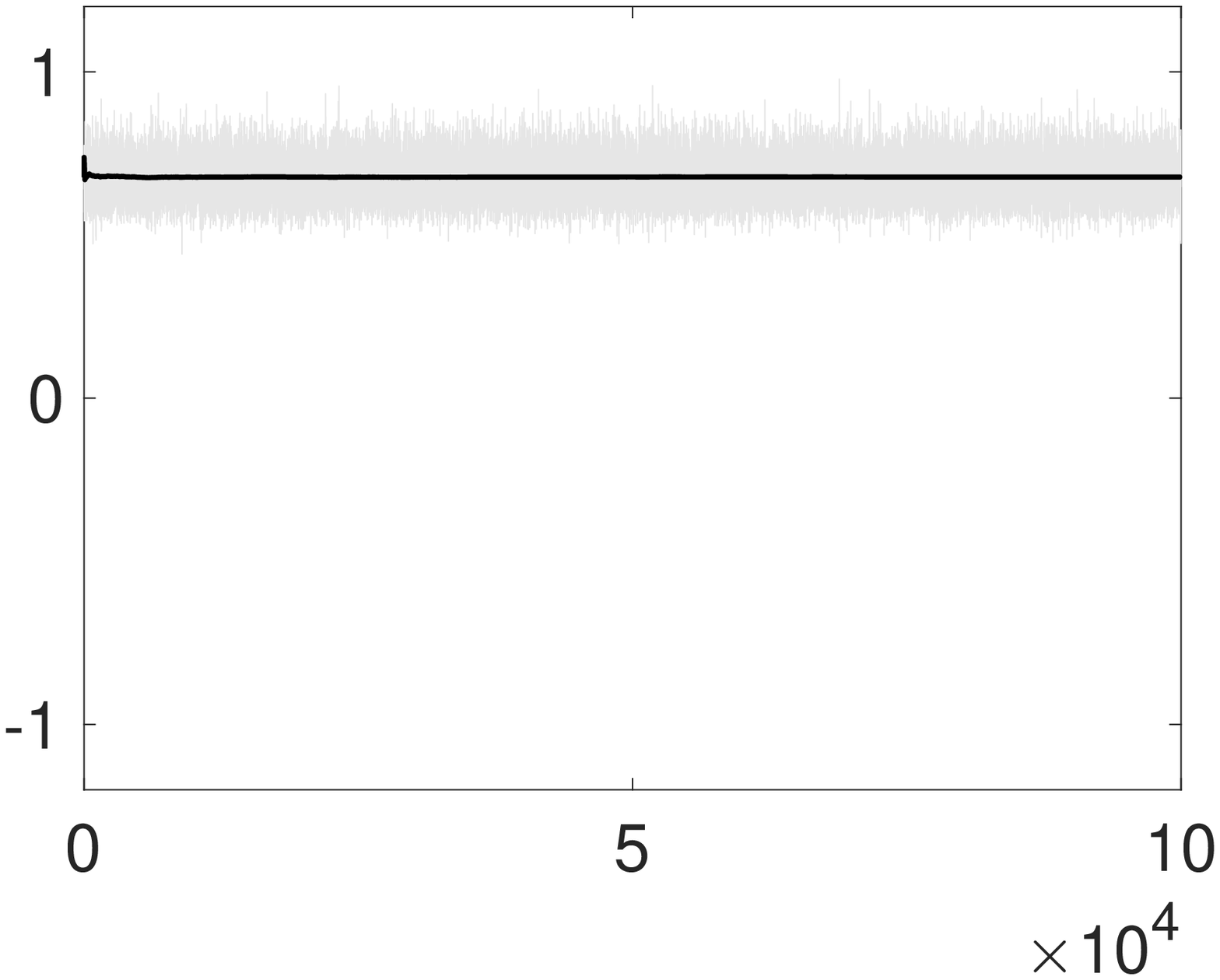}} 
  \subcaptionbox{$v^N_{66},\ N=81$ chain and cumulative mean.}{\includegraphics[width=0.32\textwidth]{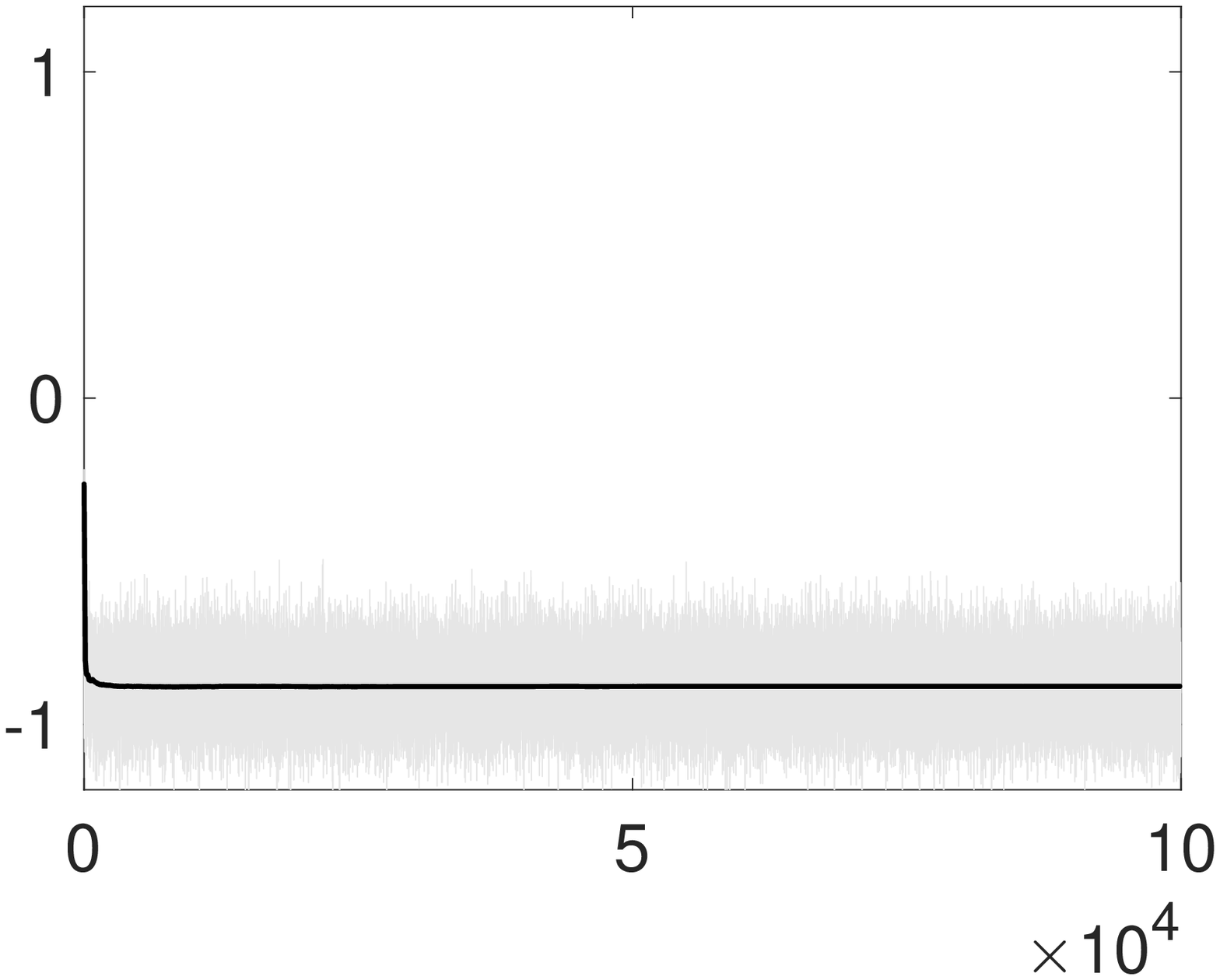}} 
  
  \caption{Estimates of $\ell^N$ and $v^N$ with a Gaussian hyperprior $u^N$ on different lattices with $81$ measurements, with the number of unknowns varying as in figures.  Bottom four subfigures (G-J) are chains and cumulative means of certain $\ell^N$ and $v^N$ elements. } \label{fig:CM_est_GMRF_discretisation_invariance2}
  \end{center}
\end{figure}

\subsection{Multimodal posterior densities}

Let us now consider posterior  densities of $v^N_j$.
If we use the Gaussian hypermodel,  and use exponentiating $u^N$  (Equation \eqref{eqn:exp}), it is easy to show numerically that the posterior densities of $v^N$ are Gaussian. 
It would be tempting to model  longest and shortest length-scaling, i.e.\ a priori lower and upper bounds for $\ell^N$.
One such model, could be given as 
\begin{equation} \label{eqn:lengthscaling222}
g(s)= \begin{cases} \gamma, &  s < P_{\mathrm{upper}} \\ \exp(a \vert  s\vert)-b, & \text{otherwise}\\ \lambda , & s > P_{\mathrm{lower}}\end{cases}
\end{equation}
where $a>0$, $b\in(0,1)$ are some  constants, and $P_{\mathrm{upper}}>P_{\mathrm{lower}}>0$.
It is is convenient to model the lower bound as $\ell(0)= 1-b= P_{\mathrm{lower}}$.
However, using this model leads to multimodal posterior densities due to the max-min cutoff, especially at the jumps.

In Figure \ref{fig:multimodal2}, we have considered the same Gaussian hypermodel and interpolation problem as in Figure \ref{fig:CM_est_1}.
Given the MCMC chains, we compute kernel density estimates of of the posterior densities at the jump at $x\approx 8$.
This corresponds to grid elements at $j=129,\ 130,\ 131$, where we have a jump from $+1$ to $-1$.
The densities at grid elements $129$ and $131$ are Gaussian. 
However, the at grid element $130$, the density is trimodal. 
We have plotted also the Gaussian density estimate with dashed line, and clearly it fails to capture the trimodality.
The reason for the trimodal density is, assumably, in the non-linear transformation of Equation \eqref{eqn:lengthscaling222}. 
Hence, the algorithm does detect edges, but we need to be careful when assessing the multimodality of the posterior densities.

\begin{figure}[htp]
\begin{center}
  \subcaptionbox{$v^N_{129},\ N=161$}{\includegraphics[width=0.32\textwidth]{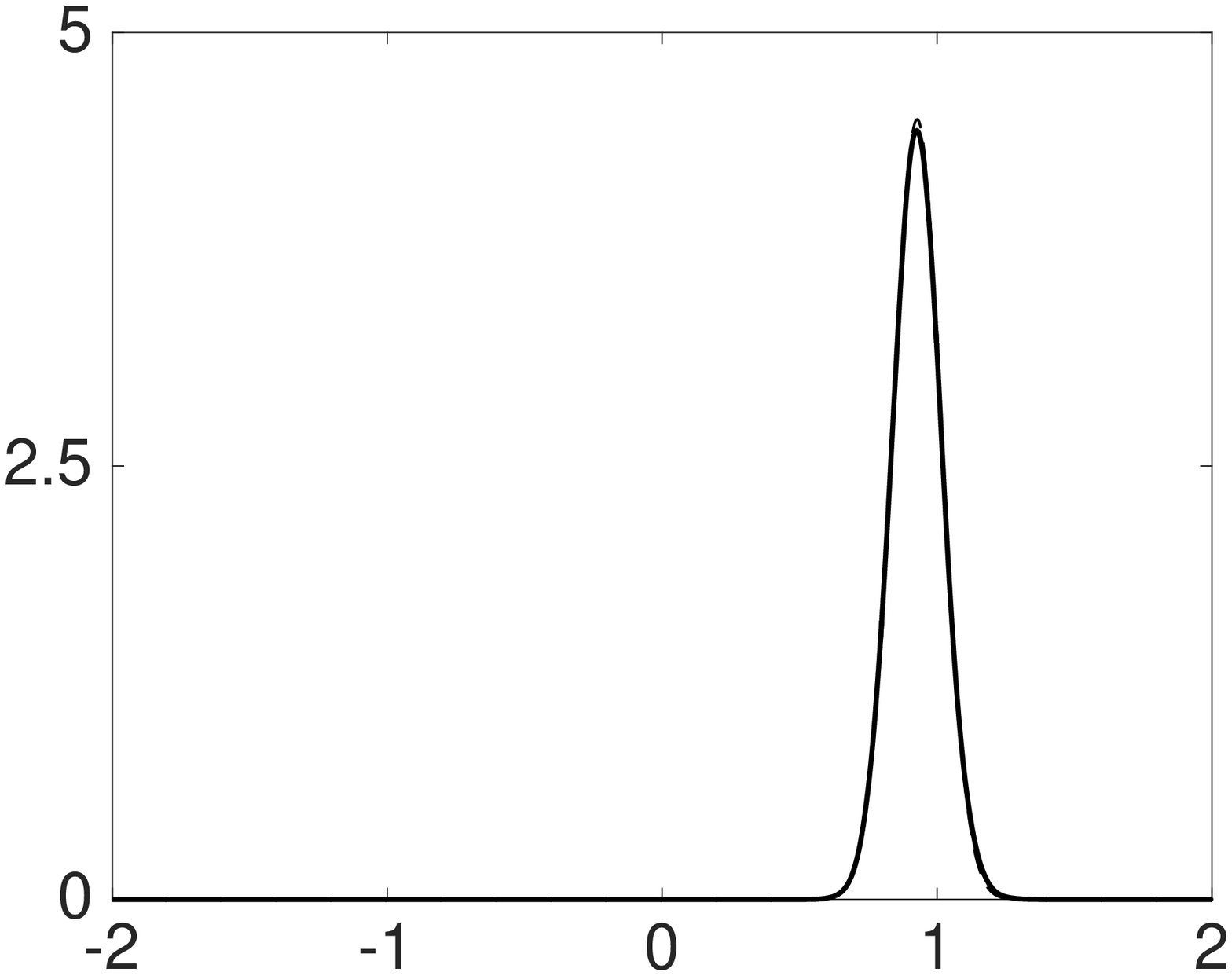}} 
  \subcaptionbox{$v^N_{130},\ N=161$}{\includegraphics[width=0.32\textwidth]{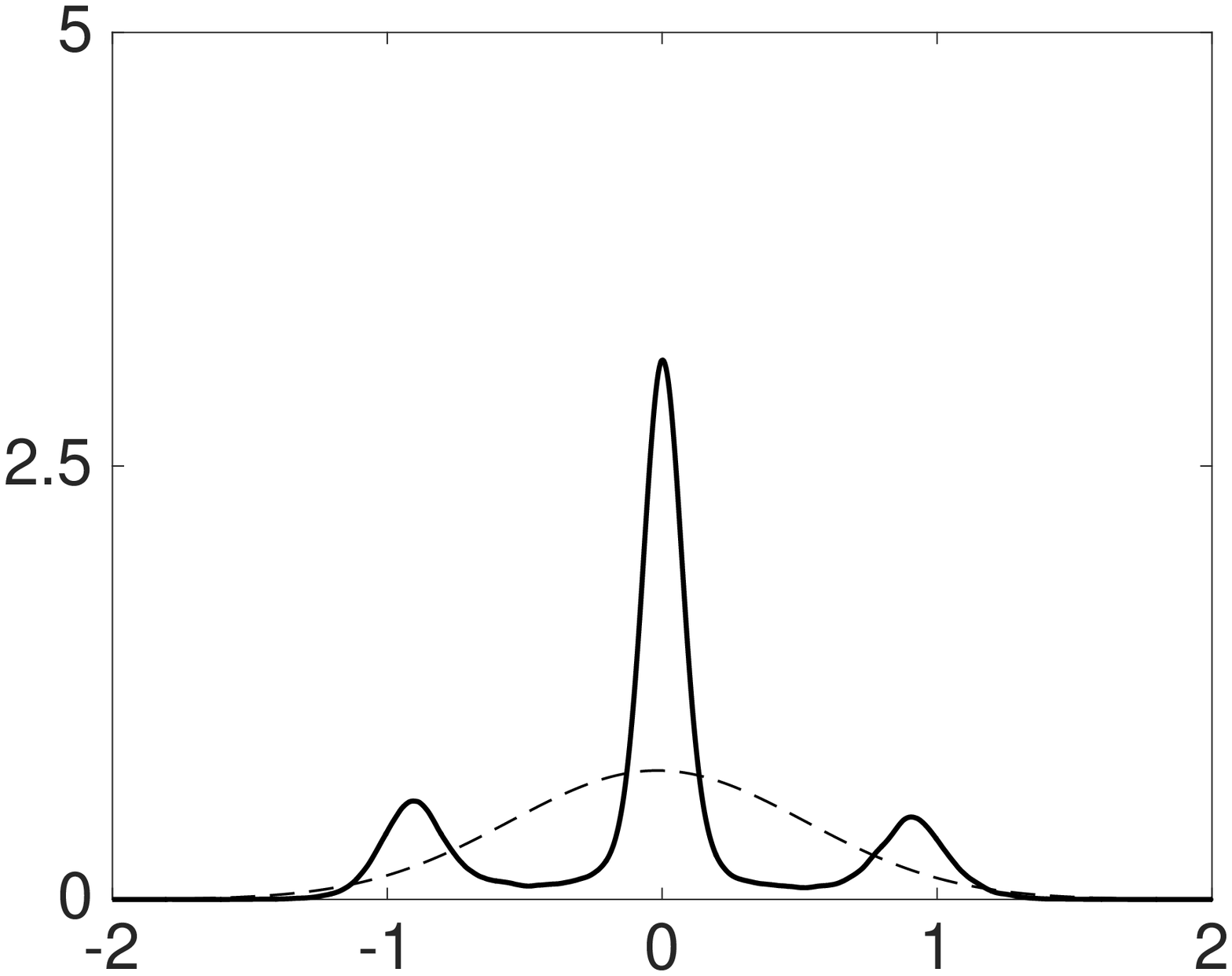}}
  \subcaptionbox{$v^N_{131},\ N=161$}{\includegraphics[width=0.32\textwidth]{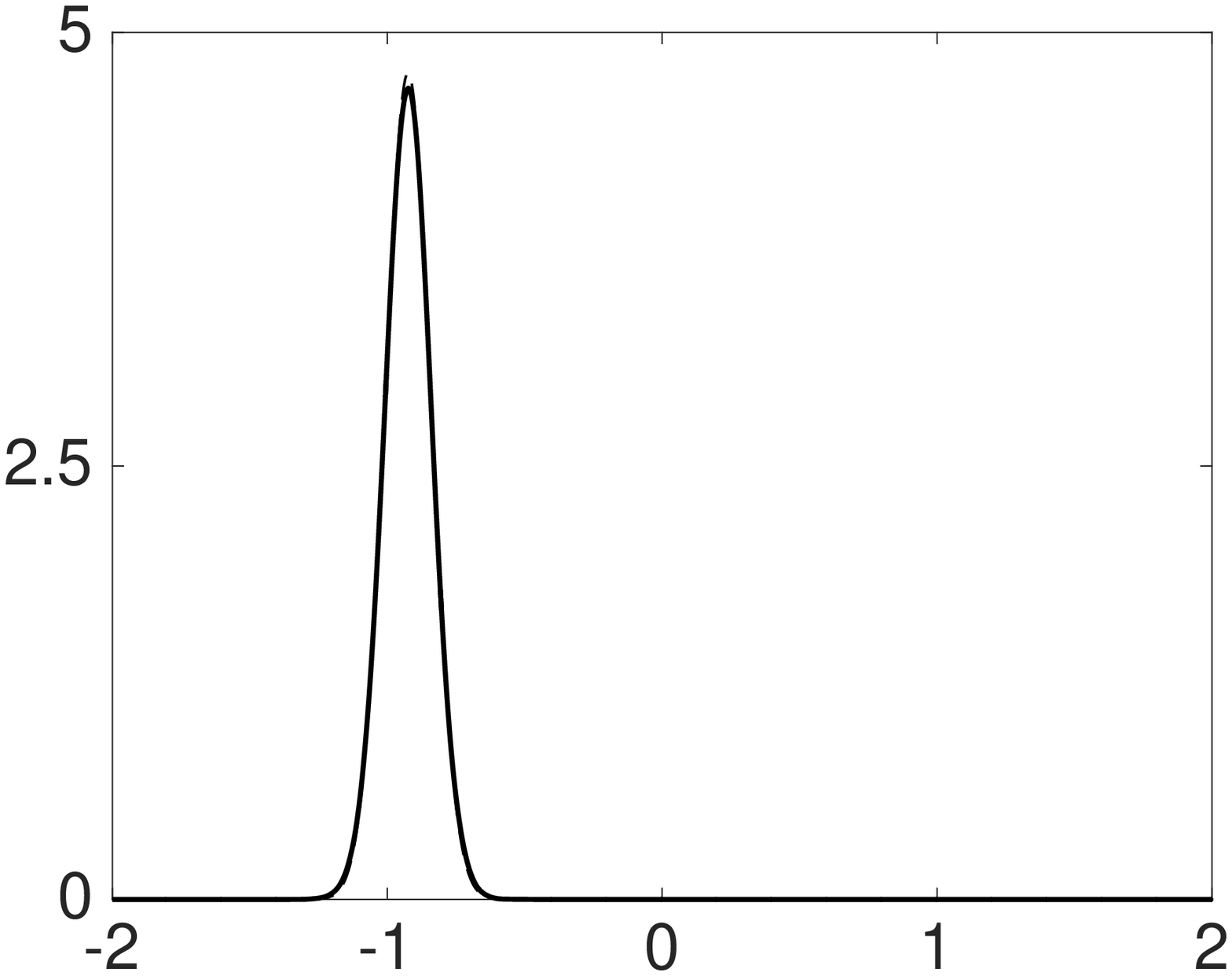}}

  \caption{Multimodality of the posterior densities $v^N_j$ at the edge around $x\approx 8$, when using Gaussian hyperprior and Equation \eqref{eqn:lengthscaling222}.  
  } \label{fig:multimodal2}
  \end{center}
\end{figure}

\subsection{Numerical differentiation}

As a second numerical example, we consider numerical differentiation of noisy data.
The continuous forward model is given with a first kind Fredholm integral equation 
\begin{equation*}
y(jh) = \int H(jh-x)v(x)dx + e_j,
\end{equation*}
where the convolving kernel $H$ is a Heaviside step function.
If we had a deterministic equation (i.e.\ no noise term $e_j$), then $y'=v$ is the differentiated function. 
Hence, we can formulate differentiation as a linear Bayesian inverse problem with observations given as $y=Av^N+e$.

For numerical tests, we choose an unknown consisting of a mollifier and a triangle function
\begin{equation*}
\int H(x'-x)v(x)dx = 
\begin{cases}
\exp\left(4-\frac{25}{x'(5-x')}\right), &  x'\in(0,5) \\
x'-7, & x'\in[7,8] \\
-x'+9, & (8,9] \\
0, & \text{otherwise}.
 \end{cases}
\end{equation*}
Derivative of a triangle function is a piecewise constant function, the same two boxcar functions as in Equation \eqref{eqn:interpolationunknown}.
Differentiated mollifier is again a smoothly varying function. 
We have chosen the mollifier constants in such a way that the differentiated mollifier stays in the same range as the mollifier, simply to avoid any visualisation problems in scaling.
The unknown function is then
\begin{equation*}
v(x) = 
\begin{cases}
\left(\frac{25}{x^2(5-x)}-\frac{25}{x(5-x)^2}\right)\exp\left(4-\frac{25}{x(5-x)}\right), &  x\in(0,5) \\
1, & x\in[7,8] \\
-1, & (8,9] \\
0, & \text{otherwise}.
 \end{cases}
\end{equation*}

In Figure \ref{fig:diff}, we have numerical derivatives $v^N$ on three different meshes, as well as the Gaussian hyperprior process $\ell^N$.
In the simulations, we have 101 observations $y$ with measurement noise $\sigma=0.03$.
We note that as numerical differentiation is an ill-posed problem, so we cannot use as high noise-levels as in the interpolation examples. 
However, with the used noise levels, the algorithm finds the edges, as well as the smooth structures.

\begin{figure}[htp]
\begin{center}

  \subcaptionbox{101 noisy measurements}{\includegraphics[width=0.32\textwidth]{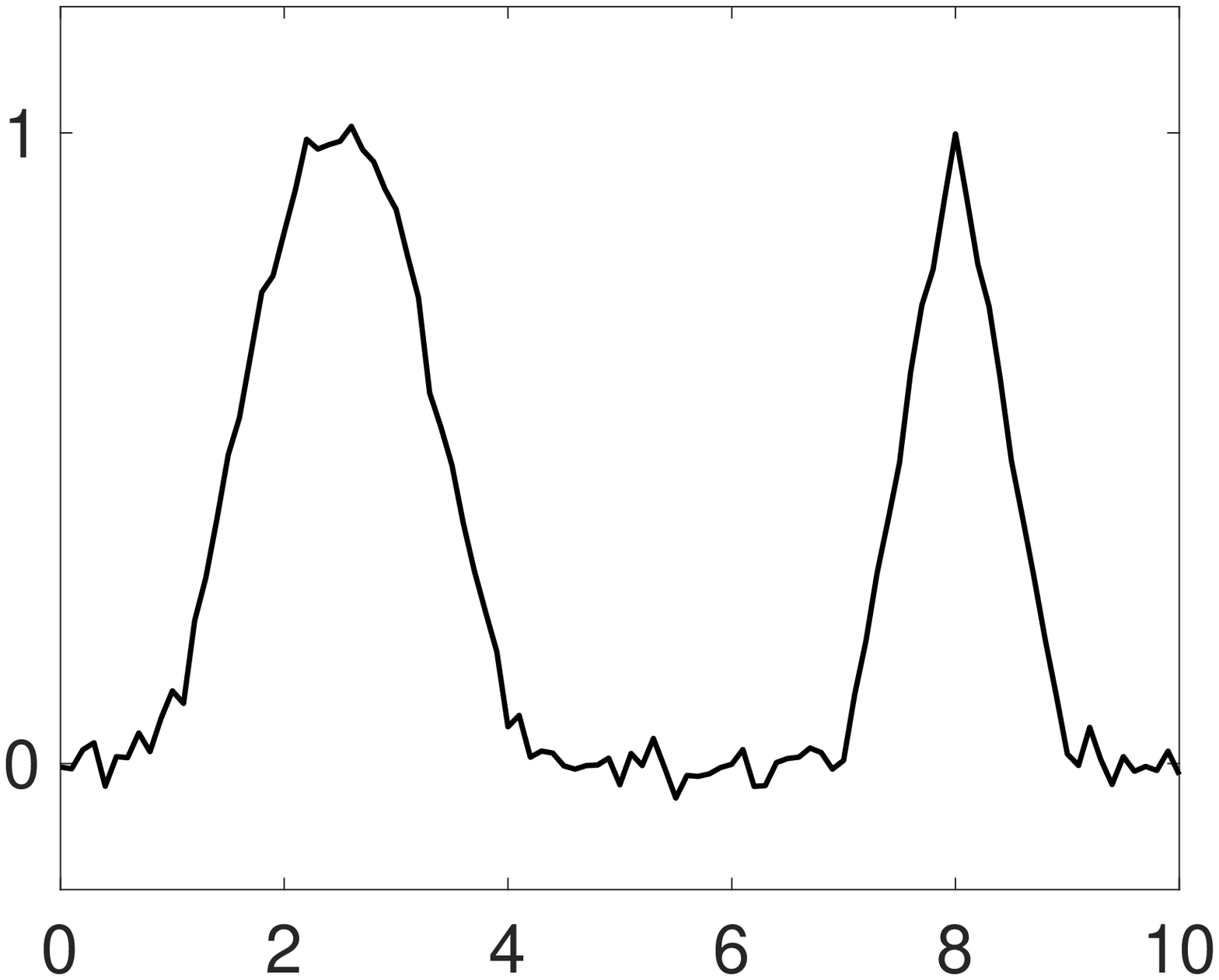}} \\

  \subcaptionbox{$\ell^N,\ N=101$}{\includegraphics[width=0.32\textwidth]{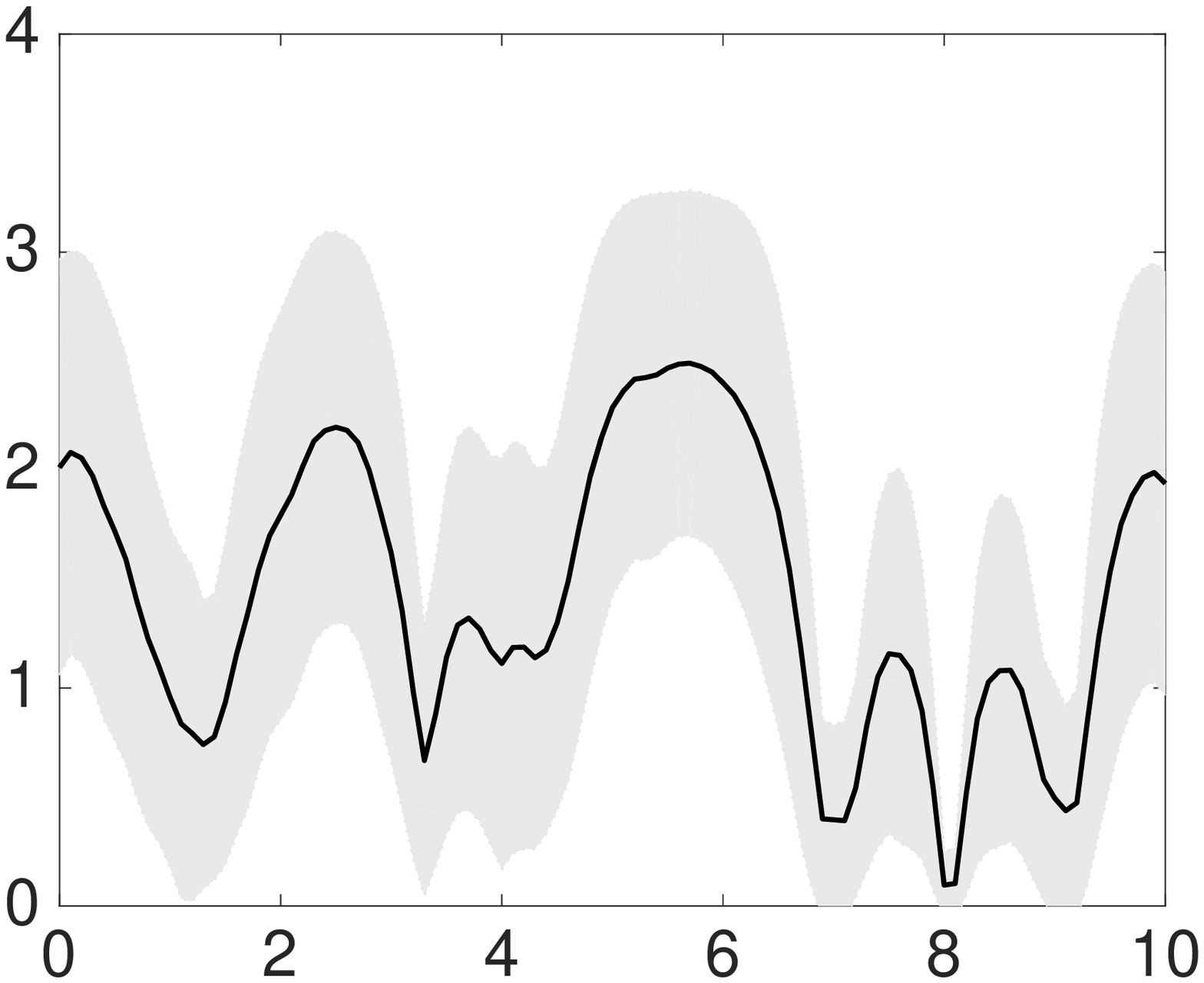}} 
  \subcaptionbox{$\ell^N,\ N=201$}{\includegraphics[width=0.32\textwidth]{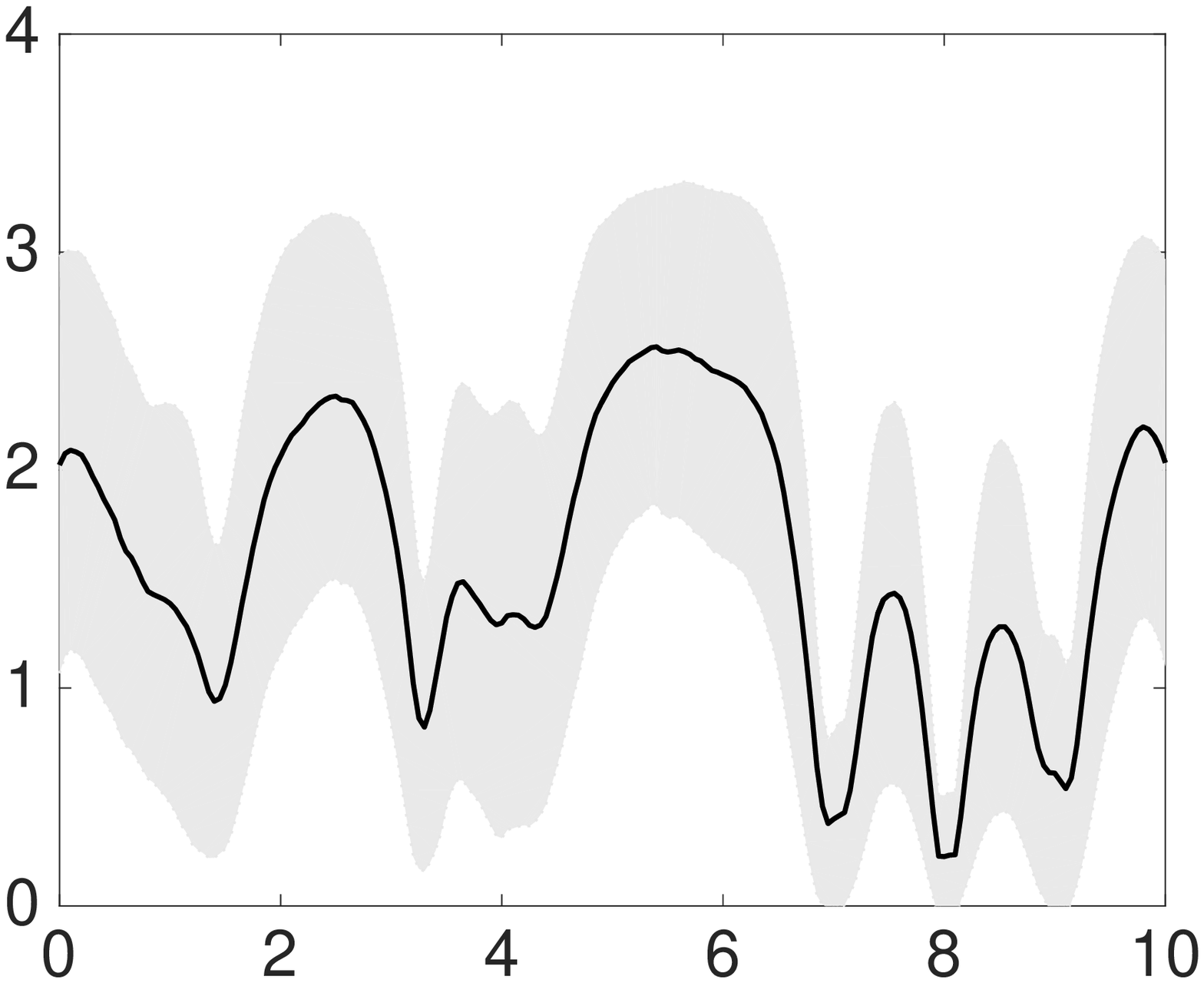}} 
  \subcaptionbox{$\ell^N,\ N=401$}{\includegraphics[width=0.32\textwidth]{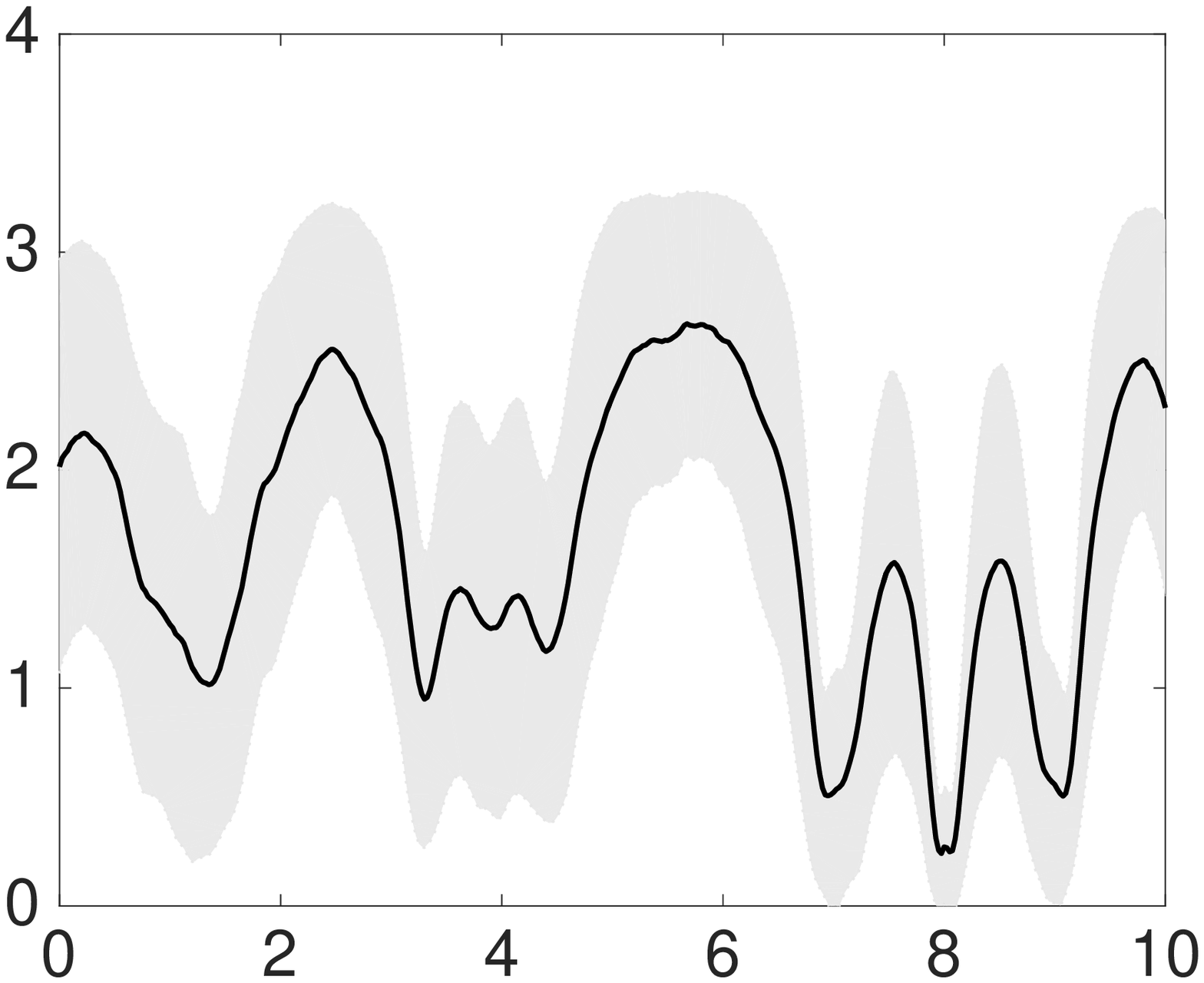}} \\
  
  \subcaptionbox{$v^N,\ N=101$}{\includegraphics[width=0.32\textwidth]{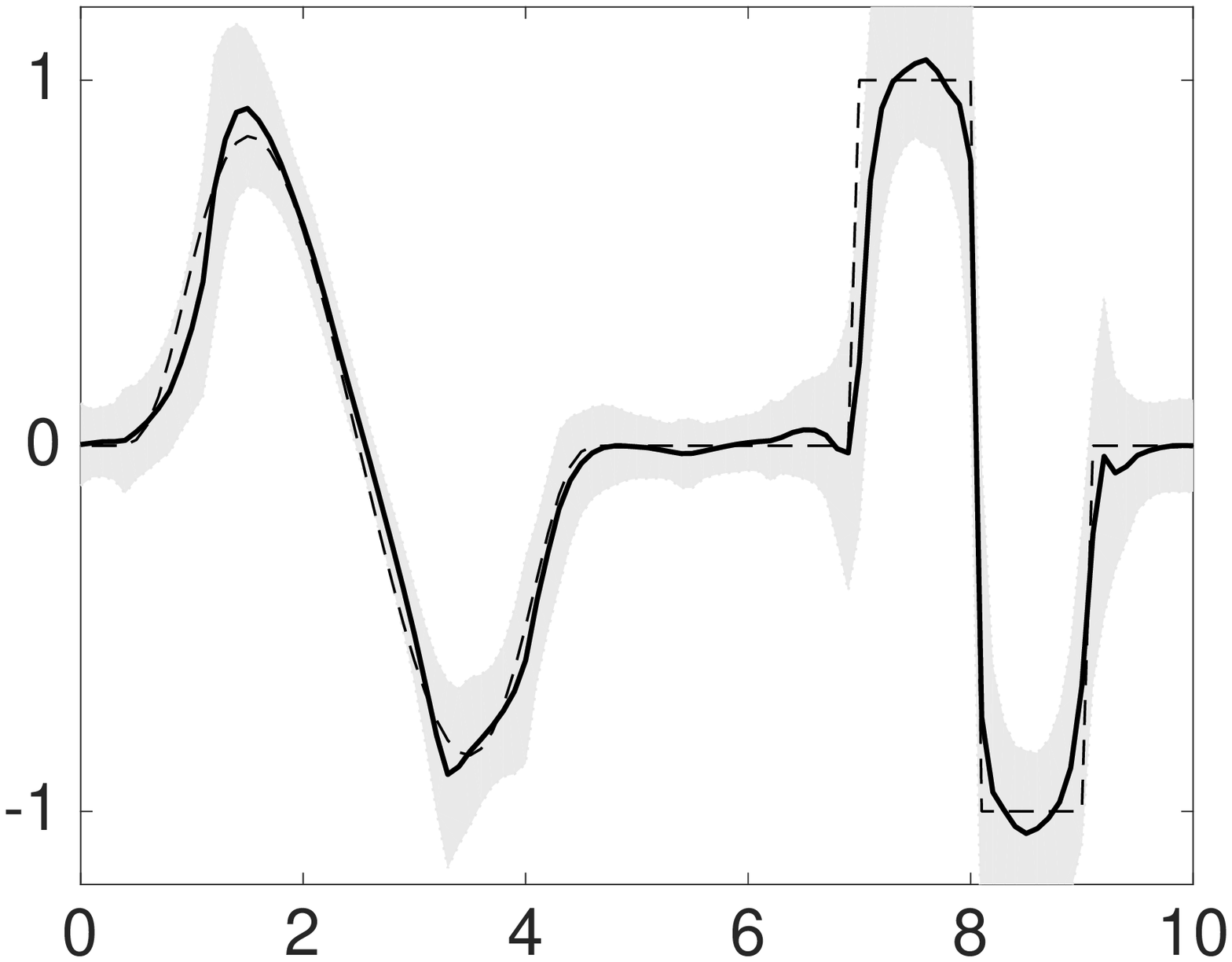}} 
  \subcaptionbox{$v^N,\ N=201$}{\includegraphics[width=0.32\textwidth]{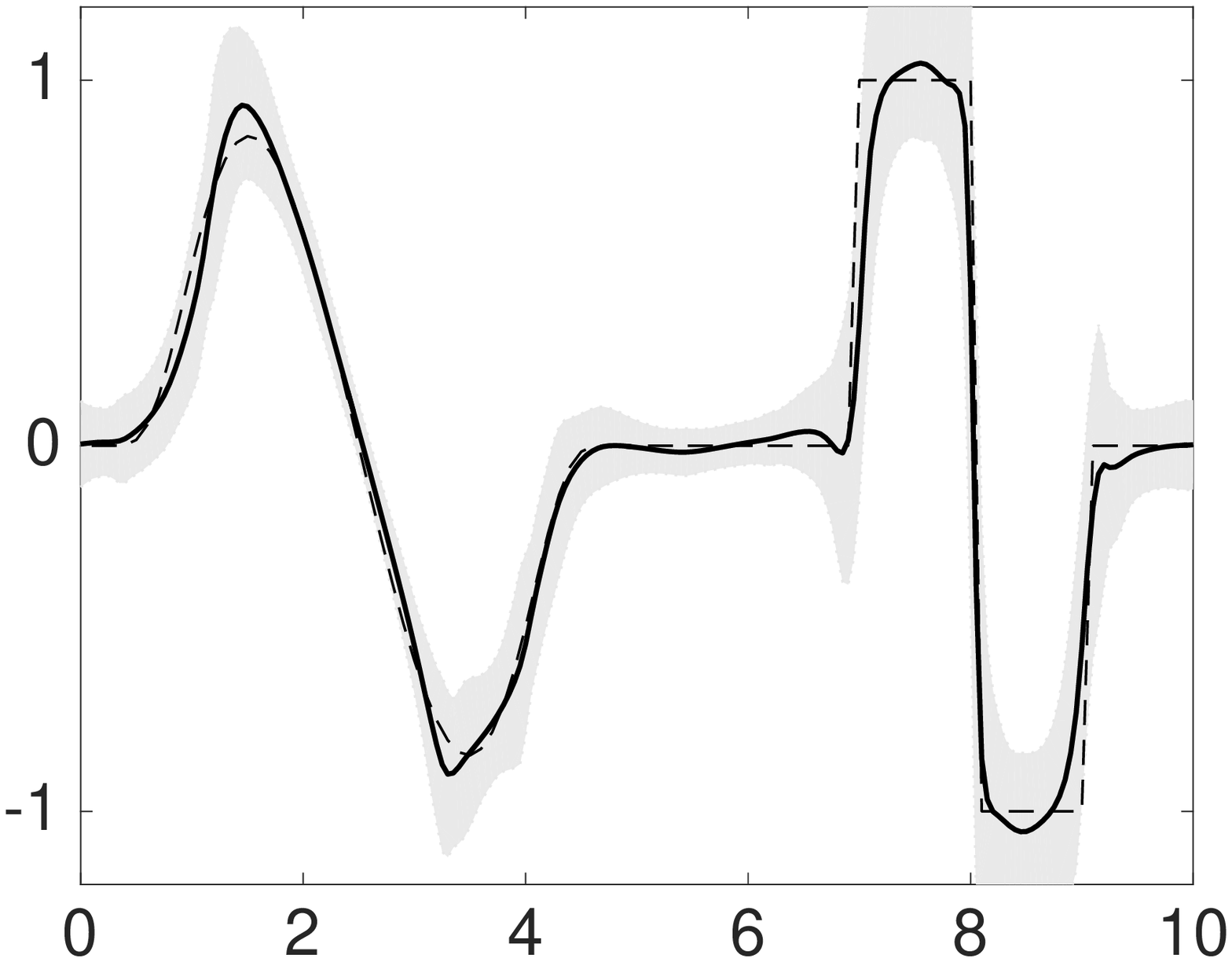}} 
  \subcaptionbox{$v^N,\ N=401$}{\includegraphics[width=0.32\textwidth]{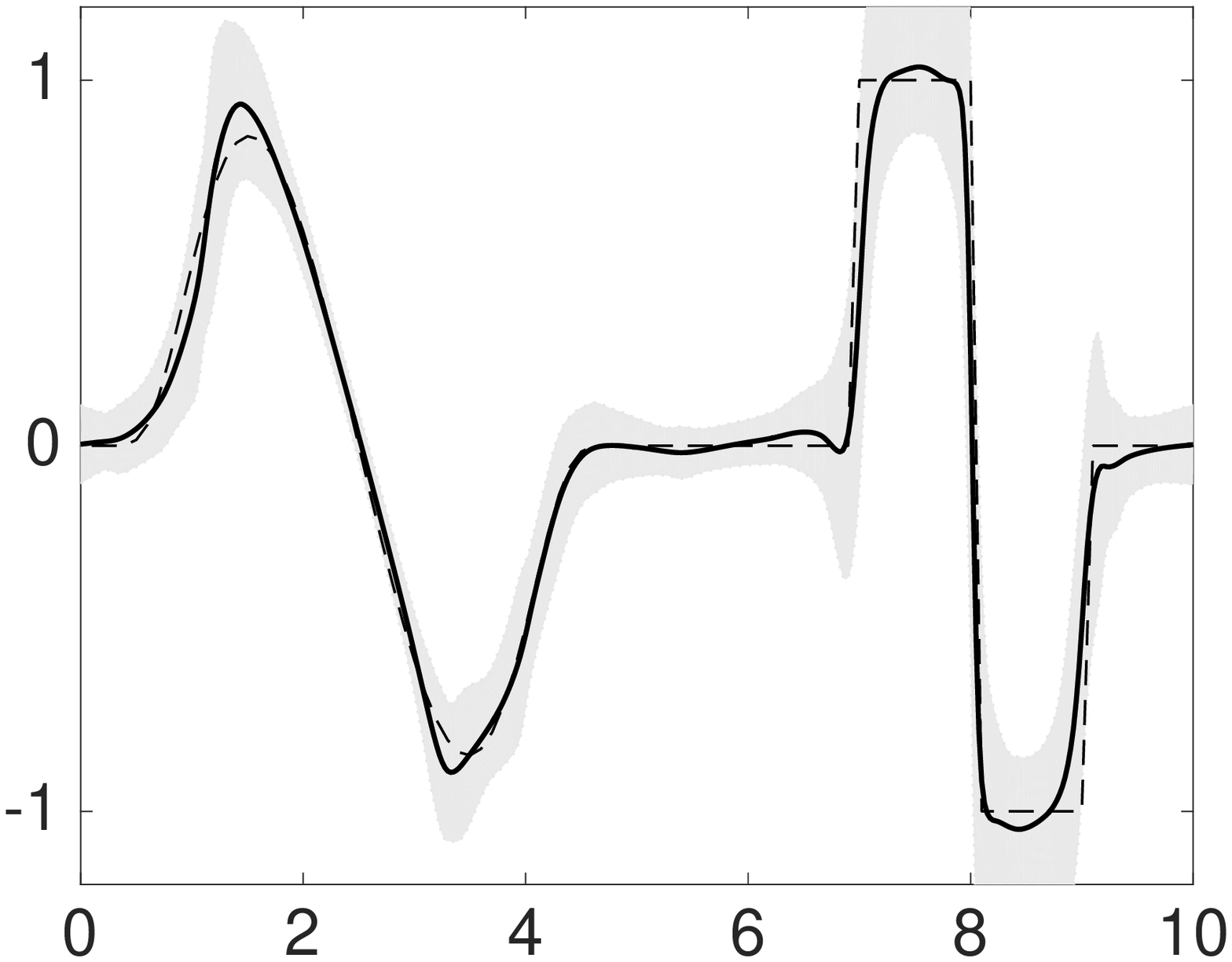}} 
  
  \caption{Numerical differentiation of a noisy signal with the developed Gaussian hypermodel. We plot $v^N$ on different meshes for seeing the discretisation-invariance of the estimates.}    \label{fig:diff}
  \end{center}
\end{figure}

\subsection{Two-dimensional interpolation}

Similarly to the one-dimensional interpolation examples with Cauchy and Gaussian hypermodels, we can make two-dimensional interpolation. 
In Figure \ref{fig:2D_gaussblock}, we have interpolation of noisy observations, originally on a $41\times 41$ mesh, and we estimate the unknown on a $81\times 81$ mesh.
Measurement noise standard deviation is $\sigma=0.025$.
As a hyperprior, we have used a two-dimensional Mat\'ern field with short length-scaling and periodic boundary conditions.
The unknown consists of a rectangle-shaped box of height 0.75 and a Gaussian-shaped smooth function of height 1.
We can clearly detect both smooth and edgy properties in this case also.

\begin{figure}[htp]
\begin{center}
  \subcaptionbox{Unknown}{\includegraphics[width=0.49\textwidth]{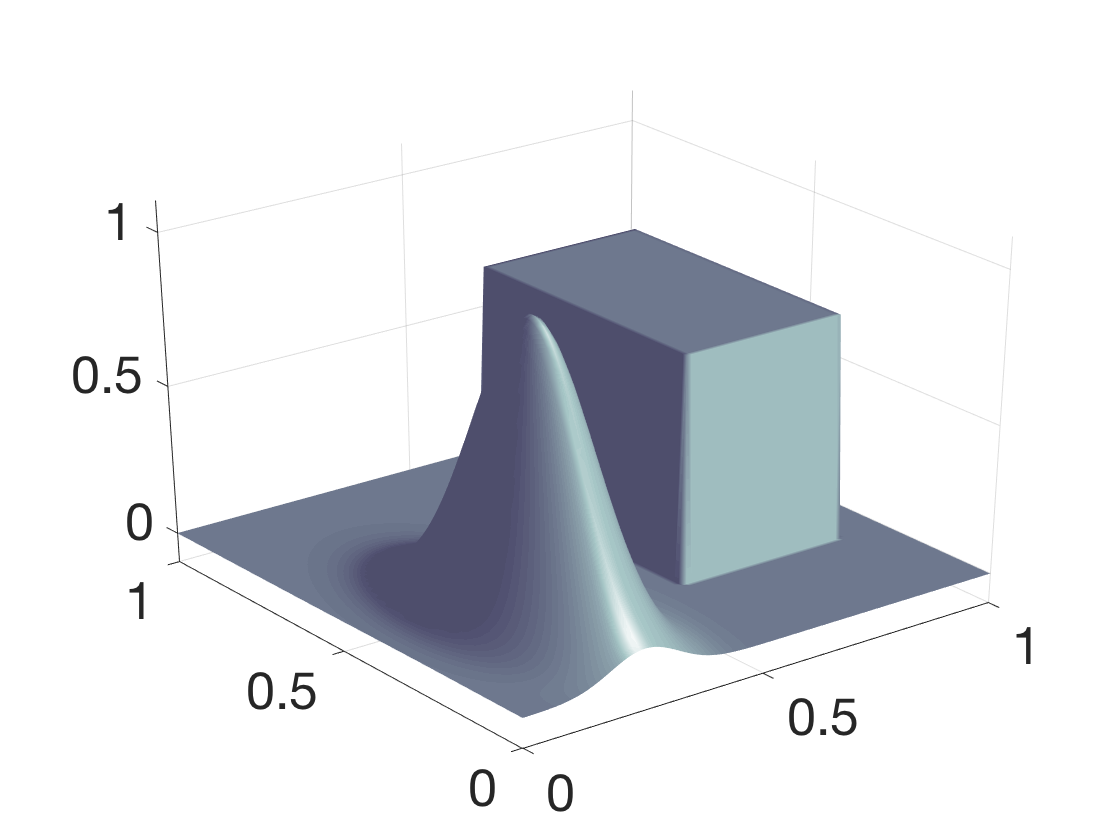}} 
  \subcaptionbox{Noisy measurements on $41\times 41$ mesh}{\includegraphics[width=0.49\textwidth]{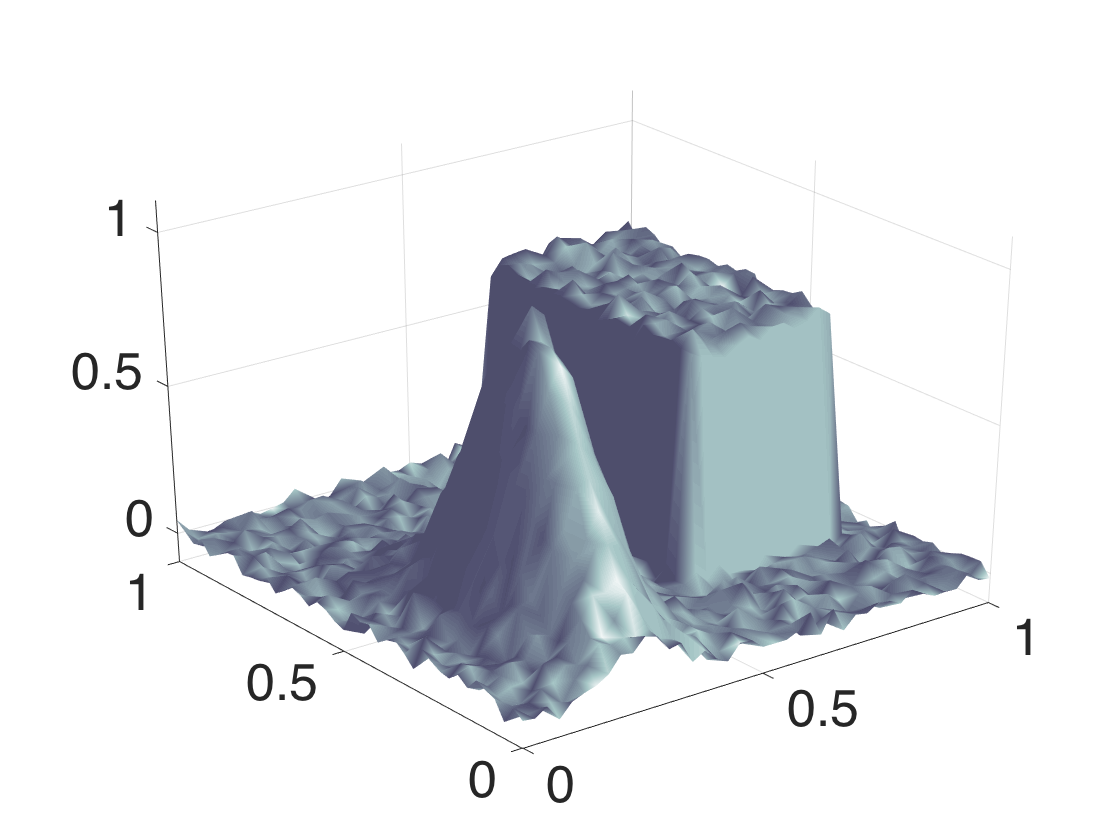}} 
  \subcaptionbox{Estimated $\ell^N$ on $81\times 81$ mesh}{\includegraphics[width=0.49\textwidth]{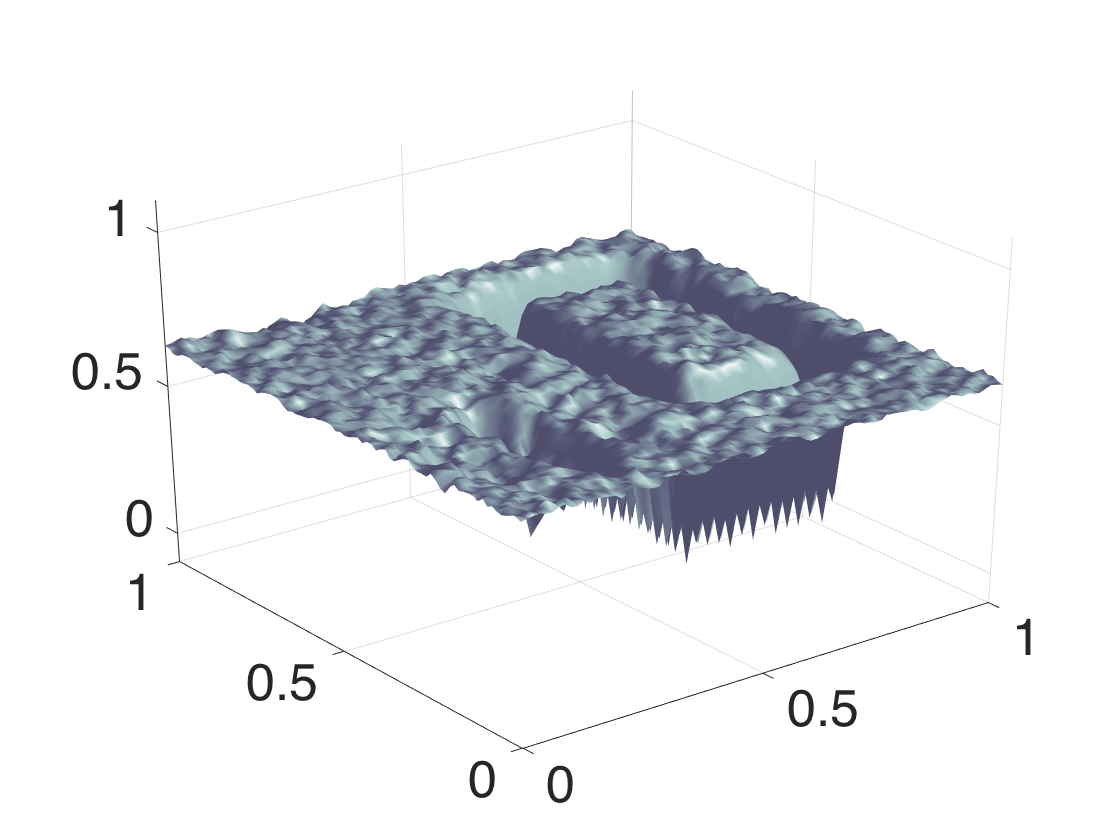}} 
  \subcaptionbox{Estimated $v^N$ on $81\times 81$ mesh}{\includegraphics[width=0.49\textwidth]{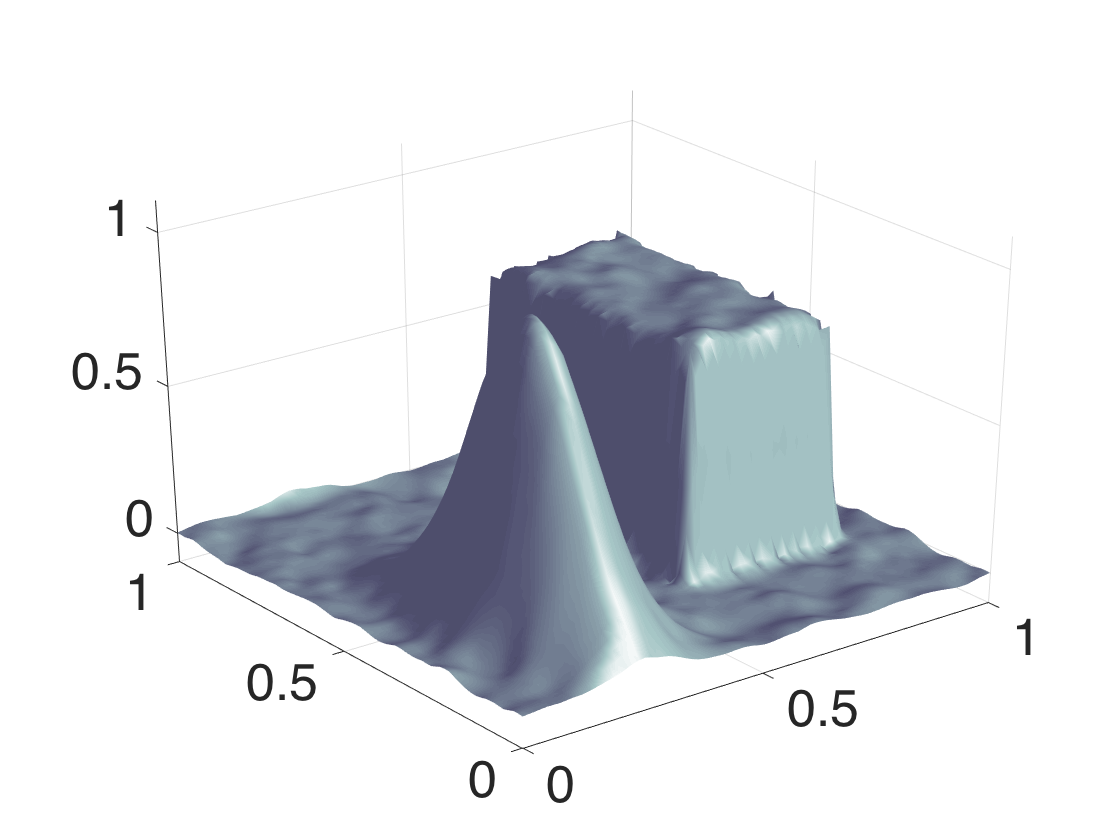}} 
  \caption{Two-dimensional interpolation of block-shaped and Gaussian-shaped structures from noisy observations. A Mat\'ern hyperprior is used in the analysis. } \label{fig:2D_gaussblock}
  \end{center}
\end{figure}

\section{Conclusion and discussion}

We have considered the construction of hypermodels which promote both smoothness and rapid oscillatory features.
The methodology is based on constructing Cauchy and Gaussian hypermodels for Mat\'ern field length-scaling $\ell^N$.
We constructed a combined Gibbs and Metropolis-within-Gibbs algorithm for computing estimates of the unknown and length-scaling, respectively.
In addition, we have shown both analytically and numerically discretisation-invariance of the estimates.
The estimates provide significant advances in comparison to standard constant-parameter Mat\'ern field priors, as we can detect more versatile features.
In this study, we did not include all the  Mat\'ern field parameters in the hyperprior.
In the future studies, e.g.\ in the two-dimensional problems, we should have hypermodel fields for $\ell_1,\ell_2,\theta$ and in addition to the variance scaling mask $\sigma^2$.

We consider this paper to be a concept paper and hence we have considered simple inversion examples.
However,  the methodology can be applied to e.g.\ electrical impedance tomography, Darcy flow models and X-ray tomography. 
In addition, implementing spatiotemporal models with infinite-dimensional Kalman filter techniques would be an interesting path forwards.
In the more theoretical side, we should study the discretisation-invariance issues more rigorously.
Also, the computational machinery needs to be developed further, for example by using  MCMC algorithms, i.e.\ the Metropolis-within-Gibbs can be run with multicore computers. 
Utilisation of GPUs would also be of interest.

\section*{Acknowledgments}
The authors thank Professor Andrew Stuart for useful discussions and proposals.
This work has been funded by Engineering and Physical Sciences Research Council, United Kingdom (EPSRC Reference:	EP/K034154/1 --  Enabling Quantification of Uncertainty for Large-Scale Inverse Problems), and, Academy of Finland (application number 250215, Finnish Program for Centers of Excellence in Research 2012-2017).


\end{document}